\documentclass[a4paper,11pt]{amsart}
\usepackage[cm]{fullpage}
\usepackage{hyperref}

\usepackage{mymacros}

\title{Lifts of cycles in tropical hypersurfaces and the Gamma conjecture}
\author{Yuto Yamamoto}
\address{
RIKEN iTHEMS, Wako, Saitama 351-0198, Japan
}
\email{yuto.yamamoto@riken.jp}
\date{}
\begin{document}

\begin{abstract}
For a complex hypersurface of dimension $d \geq 1$ in a toric variety, we construct lifts of tropical $(p, q)$-cycles with $p+q=d$ in the associated tropical hypersurface. 
The tropical cycles we consider are described by Minkowski weights, and their lifts are realized as topological cycles admitting a torus fibration structure over the tropical cycles. 
The intersection numbers of these lifted cycles are computed in terms of tropical intersection theory.
We further derive the asymptotic formulas for the period integrals of the lifts in the tropical limit, which are closely related to the mirror symmetric Gamma conjecture.
Throughout the paper, we assume that the tropicalization is dual to a unimodular triangulation of the Newton polytope.
\end{abstract}

\maketitle

\section{Introduction}\label{sc:intro}

The tropical (co)homology groups of a tropical space are defined as the (co)homology groups of certain constructive (co)sheaves on that space.
When the tropical space is smooth, the tropical (co)homology groups correspond to the grade pieces of the limiting mixed Hodge structure of the corresponding degenerating family of complex projective varieties, and in particular, the Hodge numbers of the complex projective varieties coincide with the dimensions of the tropical (co)homology groups \cite{MR2669728, MR2681794, MR3961331}.
It is expected that further geometric information of complex projective varieties, such as intersection pairings and period integrals, can also be characterized in terms of tropical spaces.
For instance, tropical homology groups have (tropical) intersection pairings (\cite{MR3330789, MR4347312}), and it has been known that the tropical (sheaf) homology group (of degree $(1, 1)$) of a tropical K3 surface coincides, as lattices, with the orthogonal complement in the K3 lattice of the hyperbolic lattice spanned by a fiber and a section of the elliptic fibration (\cite{MR2024634, MR4179831, MR4347312}).

One approach in research of this direction is to construct a direct correspondence between tropical cycles and ordinary topological cycles.
A \emph{tropical cycle} is a cycle that represents a class of a tropical homology group, and it has a bidegree $(p, q)$ $(p, q \in \bZ_{\geq 0})$.
It is expected that ordinary topological cycles and tropical $(p, q)$-cycles correspond in such a way that an ordinary topological cycle admits a torus fibration structure over a tropical cycle.
The integers $p, q$ are the dimension of its fiber and the dimension of the tropical cycle respectively.
We call such a topological cycle a \emph{lift} of the tropical cycle.
There are many previous researches that construct lifts of tropical cycles such as \cite{MR2079993, MR3228462, MR3993277, MR4194298, MR4179831, MR4179650, MR4125753, MR4294119, MR4284602, MR4782805, MR4922780}.
The lifts were used to compute period integrals in \cite{MR4194298, MR4179831, MR4782805, MR4922780}, and it was also shown in \cite[Theorem 7]{MR4347312} that the intersection numbers of lifts coincide with those of tropical cycles in the case where the tropical space is an integral affine manifold with singularities.
Some of these works construct Lagrangian lifts to study the correspondence between tropical cycles and Lagrangian submanifolds.
There are also more recent works \cite{MR4904614, CLL25} constructing special Lagrangian lifts.

Tropical cycles for which lifts were constructed so far in previous researches are all of degree $(0, d)$, $(d, 0)$, $(1, d-1)$, or $(d-1, 1)$, where $d$ is the dimension of the variety\footnote{Tropical $2$-cycles (in $3$-dimensional integral affine manifolds with singularities) of \cite[Definition 7.2]{MR3228462} can be regarded as tropical $(1, 2)$-cycles (cf.~\cite[Example 6.11]{Yam21}).
Tropical curves and tropical hypersurfaces are also regarded as tropical cycles of degree $(d-1, 1)$ and $(1, d-1)$ respectively.}.
In this article, we focus on the case of hypersurfaces in toric varieties, and construct lifts for a certain class of tropical cycles of degrees $(p, q)$ with $p+q=d$. 
The degrees are not limited to those mentioned above.
An application to computation of intersection numbers of cycles will also be provided.
We also give the formulas of the asymptotics of the period integrals of the lifts in the tropical limit, which closely relate to the mirror symmetric Gamma conjecture.
In the case of Calabi--Yau manifolds, it claims as follows:

\begin{conjecture}{\rm(\cite[Conjecture A]{MR4194298}, \cite[Section 4]{Iri23})}\label{cj:gamma}
Let $X$ be a Calabi--Yau manifold equipped with a symplectic form $\omega$ and let 
$\lc Z_t \rc_{t\in \Delta^\ast}$ be a family of Calabi--Yau manifolds parametrized by $t$ in a small punctured disc $\Delta^\ast$ that corresponds to $(X,\omega)$ under mirror symmetry. 
For a suitable choice of a holomorphic volume form $\Omega_t$ on $Z_t$ and of a coordinate $t$, if a Lagrangian cycle $C_t\subset Z_t$ is mirror to a coherent sheaf $E$ on $X$, then 
\begin{align}
\int_{C_t} \Omega_t = \int_X t^{-\omega} \cdot \widehat{\Gamma}_X 
\cdot (2\pi \sqrt{-1})^{\deg/2} \ch(E) + O\lb t^\epsilon \rb
\end{align}
as $t\to 0$ in a fixed angular sector, where $\epsilon>0$ is some constant.
\end{conjecture}

We state the main theorems of this article in the following subsections.

\subsection{Tropical cycles from Minkowski weights}\label{sc:main1}

Throughout this article, we work in the same setup as \cite{MR4782805}.
Let $K$ be the convergent Puiseux series field over $\bC$, i.e., the field of formal series $\sum_{j \in \bZ}c_j x^{j/n}$ $\lb c_j \in \bC, n \in \bZ_{\geq 1} \rb$ that have only finitely many coefficients with negative index and whose positive part is convergent in a neighborhood of $x=0$. 
It has the standard non-archimedean valuation 
\begin{align}\label{eq:val}
	\mathrm{val} \colon K \longrightarrow \bQ \cup \{ \infty \},\quad k=\sum_{j \in \bZ}c_j x^{j/n} \mapsto \min \lc j/n \in \bQ \relmid c_j \ne 0 \rc.
\end{align}
Let $d$ be a positive integer.
Consider a free $\bZ$-module $N$ of rank $d+1$, and its dual $M:=\Hom (N, \bZ)$.
We write $N_Q:=N \otimes_\bZ Q$, $M_Q:=M \otimes_\bZ Q$ for a $\bZ$-module $Q$.
Let $\Delta \subset M_\bR$ be a lattice polytope of dimension $d+1$ such that $W:=\rint (\Delta) \cap M \neq \emptyset$, where $\rint (\Delta)$ denotes the relative interior of $\Delta$.
We consider a Laurent polynomial $f=\sum_{m \in A} k_m z^m \in K \ld M \rd$ $(k_m \neq 0, \forall m \in A:= \Delta \cap M)$ over $K$ such that the function 
\begin{align}\label{eq:lambda}
\lambda \colon A \to \bQ, \quad m \mapsto \mathrm{val}(k_m)
\end{align}
extends to a strictly-convex\footnote{In this article, we say that a function $\lambda \colon \Delta \to \bR$ is \emph{convex} if it satisfies $\lambda \lb t m_1+(1-t)m_2 \rb \leq t \lambda(m_1)+(1-t)\lambda(m_2)$ for any $t \in \ld 0, 1\rd$ and $m_1, m_2 \in \Delta$.} piecewise affine function on a unimodular triangulation $\scrT$ of $\Delta$, i.e., a triangulation consisting only of $(d+1)$-dimensional simplices of the minimal volume $1/\lb d+1\rb!$ and their faces.

The tropicalization of the Laurent polynomial $f$ is the piecewise affine function $\trop \lb f \rb \colon N_\bR \to \bR$ defined by
\begin{align}
\trop(f)(n):=\min_{m \in A} \lc \mathrm{val} (k_m) + \la m, n \ra \rc,
\end{align}
and the tropical hypersurface $X\lb \trop \lb f \rb \rb \subset N_\bR$ defined by $\trop \lb f \rb$ is the corner locus of $\trop \lb f \rb$.
The tropical hypersurface $X\lb \trop \lb f \rb \rb \subset N_\bR$ induces a polyhedral decomposition $\scrP$ of $N_\bR$ that is dual to the triangulation $\scrT$ (\cite[Proposition 2.1]{MR2079993}).
The correspondence is given by
\begin{align}\label{eq:dual}
\scrT \to \scrP, \quad 
\tau \mapsto 
\lc n \in N_\bR \relmid \trop(f)(n)
=
\mathrm{val} (k_m) + \la m, n \ra, \forall m \in \tau \cap M \rc.
\end{align}
For $w \in W$, we write the element in $\scrP$ dual to $\lc w \rc \in \scrT$ as $\nabla_w$, i.e.,
\begin{align}\label{eq:nabla}
\nabla_w=\lc n \in N_\bR \relmid 
\trop(f)(n)
=
\mathrm{val} (k_w) + \la w, n \ra \rc.
\end{align}
The normal fan of $\nabla_w \subset N_\bR$ coincides with
\begin{align}\label{eq:fan_w}
\Sigma_{w}:=\lc \bR_{\geq 0} \cdot \lb \tau-w \rb \relmid \tau \in \scrT, \tau \ni w \rc.
\end{align}
Since the triangulation $\scrT$ is unimodular, the fan $\Sigma_{w}$ is also unimodular.

For each $\sigma= \bR_{\geq 0} \cdot \lb \tau-w \rb \in \Sigma_{w}$ such that $\dim \sigma \geq 1$, we choose an arbitrary point $b_\sigma$ of the relative interior of the face of $\nabla_w$ corresponding to $\tau$ by \eqref{eq:dual}.
For $q \in \lc 0, \cdots, d\rc$, we set
\begin{align}\label{eq:S_q}
    \scrS_q^w:=\lc \lc \sigma_1 \prec \cdots \prec \sigma_{q+1}\rc \relmid \sigma_i \in \Sigma_w(i), \forall i \in \lc 1, \cdots, q+1\rc \rc,
\end{align}
where $\Sigma_w (i)$ $(i \in \lc 0, \cdots, d+1\rc)$ denotes the set of cones of dimension $i$ in $\Sigma_w$.
For a sequence of cones $\scS=\lc \sigma_1 \prec \cdots \prec \sigma_{q+1} \rc \in \scrS_q^w$, we consider the unique affine map (a singular $q$-simplex) from the standard $q$-simplex $\Delta^q$ to the tropical hypersurface $X\lb \trop \lb f \rb \rb$ which sends the $i$-th vertex of $\Delta^q$ to $b_{\sigma_i}$, the point chosen for the cone $\sigma_i \in \Sigma_w$.
We write it as $\Delta_\scS \colon \Delta^q \to X\lb \trop \lb f \rb \rb$.
By abuse of notation, the image of the singular $q$-simplex $\Delta_\scS$, which equals the convex hull of the set of points $\lc b_{\sigma_i} \relmid 1 \leq i \leq q+1 \rc$ will also be denoted by $\Delta_\scS \subset X\lb \trop \lb f \rb \rb$ in the following.
Let $e_i \in M$ $\lb 1 \leq i \leq q+1\rb$ be the elements such that 
$\lc e_i \relmid 1 \leq i \leq j \rc$ are the primitive generators of 
$\sigma_j$ for all $1 \leq j \leq q+1$.
We also fix a generator $\vol(N)$ of $\bigwedge^{d+1} N$.
The map
\begin{align}\label{eq:fd-q}
\la \bigwedge_{i=1}^{q+1} e_i \wedge \bullet, \vol(N) \ra  \colon \bigwedge^{d-q} M \to  \bZ
\end{align}
defines an element of $\bigwedge^{d-q} N$, which we will write as $f(\scS) \in \bigwedge^{d-q} N$.

Let $\MW^{d-q} \lb \Sigma_w \rb$ be the group of Minkowski weights of codimension $d-q$ on $\Sigma_w$ of \cite{MR1415592}.
Each element (Minkowski weight) $a \in \MW^{d-q} \lb \Sigma_w \rb$ is a function 
$a \colon \Sigma_w(q+1) \to \bZ$ satisfying the balancing condition.
(See \eqref{eq:minkowski} for the definition.)
Let $Y_w$ denote the toric variety associated with the fan $\Sigma_w$.
It is proved in \cite[Theorem 3.1]{MR1415592} that the group of Minkowski weights $\MW^{d-q} \lb \Sigma_w \rb$ is canonically isomorphic to the Chow cohomology group $A^{d-p}(Y_w)$ of the toric variety $Y_w$ (cf.~\eqref{eq:amw}), and the cup product of $A^\ast(Y_w)$ can be written using Minkowski weights by the so-called fan displacement rule (cf.~\pref{th:MW}).
For a Minkowski weight $a \in \MW^{d-q} \lb \Sigma_w \rb$, we consider the formal sum
\begin{align}\label{eq:trop-chain}
c(a):=\sum_{\scS \in \scrS_q^w} a({\sigma[\scS]}) \cdot f (\scS) \cdot \Delta_\scS,
\end{align}
where $\sigma[\scS]$ denotes the last cone $\sigma_{q+1} \in \Sigma_w(q+1)$ in the sequence $\scS=\lc \sigma_1 \prec \cdots \prec \sigma_{q+1}\rc$.

\begin{theorem}\label{th:main1}
The following statements hold:
\begin{enumerate}
\item The formal sum $c(a)$ of \eqref{eq:trop-chain} associated with $a \in \MW^{d-q} \lb \Sigma_w \rb$ is a tropical singular $(d-q, q)$-cycle in the tropical hypersurface $X\lb \trop \lb f \rb \rb$, and defines a class of the tropical $(d-q,q)$-homology group $H_q \lb X\lb \trop \lb f \rb \rb, \scF_{d-q}^\bZ \rb$.
\item Let $w_1, w_2 \in \rint(\Delta)$, 
and $a_1 \in \MW^{d-q} \lb \Sigma_{w_1} \rb, a_2 \in \MW^{q} \lb \Sigma_{w_2} \rb$.
The tropical intersection number $\la c(a_1), c(a_2)\ra$ of the tropical cycles $c(a_1), c(a_2)$, which is defined after moving the tropical cycles so that they intersect transversely can be computed combinatorially as follows:
\begin{enumerate}
\item When $w_1=w_2$, we have
\begin{align}\label{eq:w1=w2-intersection}
\la c(a_1), c(a_2) \ra=(-1)^d \cdot \psi(a_1 \cup a_2),
\end{align}
where $a_1 \cup a_2 \in \MW^d\lb \Sigma_{w_1=w_2} \rb$ is the cup product of $a_1$ and $a_2$, and $\psi$ is the map defined by
\begin{align}
\psi \colon \MW^d\lb \Sigma_{w_1=w_2} \rb \to \bZ, \quad a \mapsto \sum_{\sigma \in \Sigma_{w_1=w_2}(1)} a(\sigma).
\end{align}
\item When $w_1 \neq w_2$ and the convex hull $\conv (\lc w_1, w_2 \rc)$ of $\lc w_1, w_2 \rc$ is in $\scrT$, 
let $\Sigma_{w_1, w_2}$ be the fan in $\left. M_\bR \middle/ \bR \cdot \lb w_1-w_2 \rb\right.$, which consists of the images of $\bR_{\geq 0} \cdot (\tau-w_1) \in \Sigma_{w_1}$ (or $\bR_{\geq 0} \cdot (\tau-w_2) \in \Sigma_{w_2}$\footnote{Notice that the images of $\bR_{\geq 0} \cdot (\tau-w_1)$ and $\bR_{\geq 0} \cdot (\tau-w_2)$ by the projection to $\left. M_\bR \middle/ \bR \cdot \lb w_1-w_2 \rb\right.$ are equal.}) with 
$\tau \succ \conv (\lc w_1, w_2 \rc)$ by the projection to $\left. M_\bR \middle/ \bR  \cdot \lb w_1-w_2 \rb\right.$.
We have
\begin{align}
\la c(a_1), c(a_2) \ra
=(-1)^{d+1} \cdot \overline{a}_1 \cup \overline{a}_2 \in \MW^d\lb \Sigma_{w_1,w_2} \rb \cong \bZ,
\end{align}
where $\overline{a}_1 \in \MW^{d-q} \lb \Sigma_{w_1, w_2} \rb, \overline{a}_2 \in \MW^{q} \lb \Sigma_{w_1, w_2} \rb$ are the Minkowski weights that send the cone which is the image of $\bR_{\geq 0} \cdot (\tau-w_i)$ by the projection to $a_i \lb \bR_{\geq 0} \cdot (\tau-w_i) \rb \in \bZ$ respectively $(i=1, 2)$.
\item When $\conv(\lc w_1, w_2 \rc) \nin \scrT$, we have
\begin{align}
\la c(a_1), c(a_2) \ra=0.
\end{align}
\end{enumerate}
\end{enumerate}
\end{theorem}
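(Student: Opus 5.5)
Part (1) is a direct boundary computation. For (2) the plan is to put the two cycles in transverse position by a controlled displacement and then match the local intersection multiplicities with the fan displacement rule of \pref{th:MW}.

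\emph{Part (1).} I compute $\partial c(a)$ in the tropical singular chain complex with coefficients in $\scF_{d-q}^\bZ$. Face maps of $\Delta_\scS$ come in three kinds:
\begin{itemize}
\item dropping a middle cone $\sigma_i$ with $1<i<q+1$;
\item dropping the first cone $\sigma_1$;
\item dropping the last cone $\sigma_{q+1}$.
\end{itemize}

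For a middle face, the flag $\sigma_{i-1}\prec\sigma_{i+1}$ with $\dim\sigma_{i+1}=\dim\sigma_{i-1}+2$ has exactly two completions, by unimodularity (a diamond). Swapping the two generators $e_i,e_{i+1}$ reverses the orientation of $\bigwedge e_j$, so $f(\scS)$ changes sign while $\sigma[\scS]$ is unchanged. These terms therefore cancel in pairs.

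For the first face, the point $b_{\sigma_1}$ disappears. Summing over the $\sigma_1\prec\sigma_2$ with $\sigma_2$ fixed, the coefficients are $\pm\,\iota_{e_1}$-type contractions. They live in $\bigwedge^{d-q}$ of the tangent space of the face containing $b_{\sigma_2}$, whose tangent lattice is $\sigma_2^\perp$. Because both faces of $\Delta_\scS$ agree, the two choices of ordering $e_1,e_2$ cancel. I will check this carefully, but it is again a swap argument.

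For the last face, I fix $\scS'=\{\sigma_1\prec\cdots\prec\sigma_q\}$. The terms are $\sum_{\sigma_{q+1}\succ\sigma_q} a(\sigma_{q+1})\,\la e_1\wedge\cdots\wedge e_q\wedge e_{q+1}\wedge\bullet,\vol\ra$, viewed in $\scF_{d-q}$ at the face dual to $\sigma_q$. This equals $\la e_1\wedge\cdots\wedge e_q\wedge\bigl(\sum a(\sigma_{q+1})e_{q+1}\bigr)\wedge\bullet,\vol\ra$. The balancing condition \eqref{eq:minkowski} says exactly that $\sum a(\sigma_{q+1})e_{q+1}$ lies in the span of $\sigma_q$, so the wedge vanishes.

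One also has to check that $f(\scS)$ actually lies in $\scF_{d-q}^\bZ$ at each point of $\Delta_\scS$. This holds because $e_1\wedge\cdots\wedge e_{q+1}$ annihilates the span of the cone, so $f(\scS)\in\bigwedge^{d-q}(\sigma_{q+1}^\perp)$. That is the tangent of the face dual to $\sigma_{q+1}$, which is the smallest face touched by $\Delta_\scS$.

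\emph{Part (2).} I move $c(a_2)$ by a small generic translation in $N_\bR$, followed by projection back onto $X(\trop(f))$ near the relevant faces. Equivalently, I use the standard moving of tropical cycles by displacing barycentric points.

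In case (c), $\nabla_{w_1}$ and $\nabla_{w_2}$ share no face: their common faces are dual to simplices containing both $w_1$ and $w_2$, and there are none. After a small displacement the supports of the two cycles are disjoint, so the intersection number is $0$.

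In case (b), the supports of the two cycles meet only along faces dual to simplices $\tau\succ\conv\{w_1,w_2\}$. These lie in the star of that edge, which is a copy of the fan $\Sigma_{w_1,w_2}$ in the quotient. Each $c(a_i)$ restricted there has $\overline a_i$ as its weights. Under a generic displacement $v$, the transverse intersection points correspond to pairs of top cones $(\overline\sigma,\overline\sigma')$ with $\overline\sigma\cap(\overline\sigma'+v)\ne\emptyset$. The local multiplicity is the lattice index $[N:\dots]$ times the pairing of the $f(\scS)$ coefficients. Comparing with the fan displacement rule of \pref{th:MW} gives $\overline a_1\cup\overline a_2$, up to the sign $(-1)^{d+1}$ coming from reordering the wedge in $\la\cdot,\vol(N)\ra$.

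In case (a), the cycles meet throughout $\nabla_w$. The coefficients $f(\scS)$ pair with each other only when the cone flags are complementary, and the fan displacement rule yields the top-degree weight $a_1\cup a_2$ on rays. Each ray $\sigma$ corresponds to a facet of $\nabla_w$, where the displaced cycles meet once with multiplicity $(a_1\cup a_2)(\sigma)$, which produces $\psi$.

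The main obstacle is the sign and multiplicity bookkeeping in (a) and (b), that is, matching the tropical intersection multiplicity of two singular simplices with coefficients with the fan displacement rule, including the sign $(-1)^d$. I would first do the case $q=0$ explicitly to fix the sign conventions, then induct on the flag length.
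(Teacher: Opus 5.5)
Your Part (1) matches the paper's argument: the swap $\scS\leftrightarrow\scS[i]$ cancels the faces $\partial_i$ for $i\le q$, the balancing condition kills the last faces, and $f(\scS)\in\bigwedge^{d-q}\sigma[\scS]^\perp$ lies in $\scF^\bZ_{d-q}$ of the facet dual to $\sigma_1$. Your case (c) is also the paper's.

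The gap is in (a) and (b), at the step ``under a generic displacement $v$, the intersection points correspond to pairs $(\overline\sigma,\overline\sigma')$ with $\overline\sigma\cap(\overline\sigma'+v)\neq\emptyset$.'' You displace $c(a_2)$ by $v\in N_\bR$, but the cones of $\Sigma_w$ live in the dual space $M_\bR$. Near $b_\rho$, the simplices $\Delta_\scS$ are cones with apex $b_\rho$ spanned by $b_{\sigma_j}-b_\rho\in\rho^\perp\subset N_\bR$. These directions depend on the arbitrary choice of the points $b_\sigma$ and are not cones of $\Sigma_w$ or of the star of $\rho$. So there is no reason for the pairs of simplices that meet after an $N_\bR$-translation (possibly near lower-dimensional faces) to be the Fulton--Sturmfels index set. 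Your per-facet claim ``multiplicity $(a_1\cup a_2)(\rho)$ in the facet dual to $\rho$'' is therefore unsupported.

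The missing device is a triangulation-preserving homeomorphism $\phi_w\colon\partial\nabla_w\to S$ onto the unit sphere of $M_\bR$. It sends the barycentric subdivision with vertices $b_\sigma$ to the one with vertices $\check{b}_\sigma\in\rint\sigma$. Under $\phi_w$, the support of $c(a_1)$ is exactly $S\cap\bigcup_{\sigma\in\Sigma_w(q+1)}\sigma$. You then move $c(a_2)$ by pulling back $S\cap\bigcup_{\sigma'\in\Sigma_w(d+1-q)}(m_0+\sigma')$, with $m_0\in M_\bR$ small and generic. Because $m_0$ is small, $\sigma\cap(m_0+\sigma')$ meets $S$ only if it is an unbounded ray, whose direction lies in $\sigma\cap\sigma'$. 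Hence $\sigma,\sigma'$ share a ray $\rho$, the point lies in the facet dual to $\rho$, and the intersection points are in bijection with the displacement-rule index set. For (b), do the same with $\phi_{w_2}$ on the common facet dual to $\rho=\bR_{\ge0}(w_1-w_2)$, using the same points $b$ there for both cycles, and project along $\rho$ to reach $\Sigma_{w_1,w_2}$.

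The multiplicity is then a direct computation at each point; no induction on flag length is needed. Take $e_1$ spanning $\rho$, $e_1,\dots,e_{q+1}$ adapted to the flag of $\sigma$, and $e_1,e_{q+2},\dots,e_{d+1}$ adapted to that of $\sigma'$, and put $I=\langle\bigwedge_{i=1}^{d+1}e_i,\vol(N)\rangle$. By unimodularity, $[M:\bZ(M\cap\sigma)+\bZ(M\cap\sigma')]=|I|$. Note this index is taken in $M$, not $N$, and the wedge pairing itself produces it rather than multiplying it. The positively oriented integral volume form on the facet is $\frac{(-1)^d}{|I|}\bigwedge_{i=2}^{d+1}e_i$. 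The tropical multiplicity is therefore $(-1)^d a_1(\sigma)a_2(\sigma')|I|$, with one intersection point per pair rather than one per facet.

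In (b), the extra sign does not come from reordering a wedge. The flag for $w_1$ starts at the ray $-\rho$, so its generators are $-e_1,e_2-e_1,\dots,e_{q+1}-e_1$. Since $-e_1\wedge\bigwedge_{i=2}^{q+1}(e_i-e_1)=-\bigwedge_{i=1}^{q+1}e_i$, the sign $(-1)^d$ becomes $(-1)^{d+1}$.
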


For the definitions of tropical cycles, their intersection numbers, and tropical homology groups, see \pref{sc:tropical-homology}.
These were introduced in \cite{MR3330789, MR3961331}.
These notions are also formulated for integral affine manifolds with singularities, another type of tropical spaces (cf.~\cite{MR2669728, MR4347312, Yam21}).

In the proof of \pref{th:main1}, we will see that the balancing condition of the Minkowski weight $a$ ensures that \eqref{eq:trop-chain} becomes a cycle.
We will also see in \pref{th:main1}(2) that the fan displacement that we take to compute the cup product of $A^\ast(Y_w)$ in terms of Minkowski weights corresponds to moving tropical cycles so that they intersect transversely.

In a certain special case (the case of the tropical $(1, 1)$-homology groups of floor decomposed hypersurfaces of dimension $2$ in tropical projective spaces), the intersection pairings of tropical homology groups are explicitly computed in \cite{MR3088918}.
This computation is carried out by induction on the degree of the defining polynomial, together with the Mayer--Vietoris exact sequence.
The assumption that a floor decomposition exists is crucial in the computation.
The intersection pairing of a tropical K3 surface that is an integral affine manifold with singularities is also computed, by taking a basis of the tropical (sheaf) homology group explicitly \cite{MR2024634, MR4179831, MR4347312}.
The cup product of (the ambient part of) the tropical cohomology of a tropical Calabi--Yau hypersurface is also computed in \cite{MR4484542} using \v{C}ech cohomology with respect to acyclic coverings.

While tropical cycles considered in \pref{th:main1} are topological cycles, 
there is also a notion of tropical algebraic or analytic cycles, which was defined in \cite{MR2275625}.
In \cite{MR3330789}, tropical algebraic/analytic cycles are called \emph{straight tropical cycles} to distinguish them from tropical topological cycles\footnote{A straight tropical cycle of dimension $q$ naturally defines a tropical topological $(q, q)$-cycles (\cite[Proposition 4.3]{MR3330789}).}.
The intersection theory for straight tropical cycles has been studied in many papers such as \cite{MR2591823, MR2887109, MR3032930}.

\subsection{Lifts of tropical cycles}\label{sc:main2}

Let $t \in \bR_{>0}$ be an element in a small neighborhood of $0$, where all coefficients $k_m$ of the Laurent polynomial $f$ converge.
We consider the Laurent polynomial $f_t \in \bC\ld M \rd$ over $\bC$ obtained by substituting $t$ for the indeterminate $x$ in the coefficients of $f$.
We define
\begin{align}
\mathring{Z}_t:=\lc z \in N_{\bC^\ast} \relmid f_t(z)=0 \rc.
\end{align}
Let further $\Sigma$ be a rational simplicial fan in $N_\bR$ that is a refinement of the normal fan of $\Delta$, and $Y_\Sigma \supset N_{\bC^\ast}$ be the associated toric variety over $\bC$.
We write the closure of the hypersurface $\mathring{Z}_t$ in $Y_\Sigma$ as $Z_t$.

Let $l \geq 1$ be an integer.
We consider the polytope $l \cdot \Delta:=\lc l \cdot m \in M_\bR \relmid m \in \Delta \rc$, and its triangulation $l \cdot \scrT:=\lc l \cdot \tau \relmid \tau \in \scrT \rc$.
We set $V_l:=M \cap \rint \lb l \cdot \Delta \rb$, and take an element $v \in V_l$.
Let $\tau_v \in \scrT$ be the minimal cell such that $v \in l \cdot \tau_v$.
By the unimodularity assumption on $\scrT$, one can uniquely write $v= \sum_{m \in A \cap \tau_v} p_m \cdot m$ with $p_m \in \bZ \cap \lb 0, l \rd$ such that $\sum_{m \in A \cap \tau_v} p_m =l$.
We consider
\begin{align}\label{eq:omega}
\omega_t^{l, v}:=
\lb \bigwedge_{i=0}^{d} \frac{dz_i}{z_i} \rb
\frac{1}{\lb f_t \rb^l}
\prod_{m \in A \cap \tau_v} \lb k_{m, t} \cdot z^m \rb^{p_m}, 
\end{align}
where $\lb z_0, \cdots, z_{d} \rb$ are $\bC^\ast$-coordinates on $N_{\bC^\ast} \cong \lb \bC^\ast \rb^{d+1}$, and $k_{m, t} \in \bC^\ast$ is the number obtained by substituting $t$ to the indeterminate $x$ in $k_m$.
\eqref{eq:omega} extends to a meromorphic $(d+1)$-form on $Y_\Sigma$ that has a pole along $Z_t$, and such forms generate $H^{0} \lb Y_\Sigma, \Omega^{d+1} \lb l \cdot Z_t \rb \rb$ (cf.~\cite{MR1290195} or \cite[Section 2]{MR4782805}).
We write the image of $\omega_t^{l, v}$ by the Poincar\'{e} residue map 
\begin{align}\label{eq:Res}
\Res \colon H^{0} \lb Y_\Sigma, \Omega^{d+1} \lb l \cdot Z_t \rb \rb \to H^d \lb Z_t, \bC \rb
\end{align}
(cf.~e.g.~\cite[Section 8]{MR0260733}) as $\Omega_t^{l, v} \in H^d \lb Z_t, \bC \rb$.
We consider the period integrals of $\Omega_t^{l, v}$.
Notice that the cohomology group of a hypersurface decomposes into the residual part (the image of the Poincar\'{e} residue map \eqref{eq:Res}) and the ambient part which consists of cohomology classes coming from the ambient toric variety, and therefore, the former part is essential when it comes to Hodge structures of hypersurfaces (cf.~\cite[Section 5]{MR2019444}).
Notice that the periods of the residues of forms with a higher-order pole are also necessary to describe the Hodge structure of the hypersurface $Z_t$.

For $w \in W$, we set 
\begin{align}\label{eq:Aw}
A_{w}:=\lc m \in A \setminus \lc w \rc \relmid \conv \lb \lc m, w \rc \rb \in \scrT \rc.
\end{align}
Let $D_m^w$ $\lb m \in A_{w} \rb$ denote the toric divisor on the toric variety $Y_{w}$ associated with the $1$-dimensional cone $\bR_{\geq 0} \cdot \lb m-w \rb \in \Sigma_{w}$.
We also set $\lambda_m:=\val(k_m) \in \bQ$ for $m \in A$, and 
\begin{align}
\omega_{\lambda}^{w}&:=\sum_{m \in A_{w}} \lb \lambda_{m}-\lambda_{w} \rb D_{m}^w\\
\quad \sigma^w&:=\sum_{m \in A_{w}} D_{m}^w\\
E_{v, w}&:=
\lb \prod_{m \in A_w \cap \tau_v} \prod_{i=0}^{p_m-1} \lb D_m^w +i \rb \rb
\prod_{i=0}^{p_w-1} \lb \sigma^w -i \rb \in H^{\ast} \lb Y_w, \bR \rb,
\end{align}
where $p_m$ are the integers that we used to write $v= \sum_{m \in A \cap \tau_v} p_m \cdot m$, and if $w \nin \tau_v$, we set $p_w:=0$ and the last product in $E_{v, w}$ means the empty product.
We also define
\begin{align}
\widehat{\Gamma}_{w}&:=\frac{\prod_{m \in A_{w}} \Gamma \lb 1+D_m^w \rb}{\Gamma (1+\sigma^w)} \in H^{\ast} \lb Y_w, \bR \rb
\end{align}
using the power series expansion of the gamma function $\Gamma \lb 1+ x\rb$:
\begin{align}\label{eq:gamma-exp}
\Gamma(1+x)=\exp \lb -\gamma x+ \sum_{k=2}^\infty \frac{\lb -1 \rb^k}{k} \zeta(k) x^k \rb,
\end{align}
where $\gamma$ is the Euler constant, and $\zeta(k)$ is the Riemann zeta value.
Note that the restriction of the class $\widehat{\Gamma}_{w} \in H^{\ast} \lb Y_w, \bR \rb$ to an anticanonical hypersurface of the toric variety $Y_{w}$ is the gamma class of the hypersurface.
See \cite[Section 4.1]{MR4194298}.
One can also find a more explicit expression of $\widehat{\Gamma}_{w}$ at (15) in loc.cit.

The tropical cycle $c(a)$ of \eqref{eq:trop-chain} is supported in
\begin{align}\label{eq:sk-nabla}
\partial \nabla_w^{q}:=\bigcup_{\scS \in \scrS_q^w} 
\Delta_\scS
\subset \partial \nabla_w.
\end{align}
For $\scS=\lc \sigma_1 \prec \cdots \prec \sigma_{q+1} \rc \in \scrS_q^w$, we define
\begin{align}
    \mathring{\Delta}_\scS:=
    \left\{ \begin{array}{ll}
\Delta_\scS & q=0 \\
\Delta_\scS \setminus \Delta_{\scS(-1)} & q \geq 1,
\end{array}
\right.
\end{align}
where $\scS(-1):=\scS \setminus \lc \sigma[\scS] \rc= \lc \sigma_1 \prec \cdots \prec \sigma_{q} \rc \in \scrS_{q-1}^w$.
We consider the map
\begin{align}
    i_t \colon N_{\bC} \to N_{\bC^\ast}
\end{align}
induced by $\bC \to \bC^\ast, c \mapsto t^c$, and the bijection
\begin{align}\label{eq:j_t}
j_t \colon N_\bR \times N_\bR \to N_\bC, 
\quad (n ,n') 
\mapsto 
n+ \frac{2 \pi \sqrt{-1}}{\log t} n'.
\end{align}
The composition $i_t \circ j_t \colon N_\bR \times N_\bR \to N_{\bC^\ast}$ induces the bijection
\begin{align}\label{eq:h_t}
h_t \colon N_\bR \times \lb \left. N_\bR \middle/ N \right. \rb 
\to N_{\bC^\ast}.
\end{align}
Let further
\begin{align}\label{eq:1-proj}
\pi_1 \colon N_\bR \times (\left. N_\bR \middle/ N \right.) \to N_\bR, \quad (n, n') \mapsto n
\end{align}
be the first projection.

Let $c_m \in \bC^\ast$ denote the coefficient of $x^{\lambda_m}$ in $k_m \in K$ $(m \in A)$.
We choose a branch of the argument $\arg \lb - c_m/c_w \rb$ for every $m \in A_w$, and define
\begin{align}
\lb
-\frac{c_m}{c_{w}}
\rb^{-D_m^w}
:=\exp \lb -D_m^w \log \lb -\frac{c_m}{c_{w}} \rb \rb
=\exp \lb -D_m^w 
\lb \log \left| -\frac{c_m}{c_{w}} \right|+\sqrt{-1} \arg \lb -\frac{c_m}{c_{w}} \rb \rb \rb.
\end{align}
Let $K(Y_w)$ be the Grothendieck group of coherent sheaves on the toric variety $Y_w$.

\begin{theorem}\label{th:main2}
For sufficiently small $t >0$, there exists a group homomorphism
\begin{align}\label{eq:Psi}
    \Psi_t^w \colon K \lb Y_w \rb \to H_d \lb \mathring{Z}_t, \bZ \rb
\end{align}
satisfying the following:
\begin{enumerate}
    \item For any $\scE \in K \lb Y_w \rb$, we have the following formulas of the asymptotics of the period integrals:
\begin{enumerate}
\item When $\conv \lb \lc w \rc \cup \tau_v \rb \in \scrT$, we have
\begin{align}\label{eq:period-formula-1}
\int_{\Psi_t^w(\scE)}  \Omega_t^{l, v} 
=
\frac{(-1)^{d+p_w}}{(l-1)!}
\int_{Y_{w}} 
t^{-\omega_{\lambda}^{w}}
\cdot 
\widehat{\Gamma}_w
\cdot 
E_{v, w}
\cdot 
\lb 2\pi \sqrt{-1}\rb^{\deg/2} \ch \lb \scE \rb
\cdot
\prod_{m \in A_{w}} 
\lb
-\frac{c_m}{c_{w}}
\rb^{-D_m^w}
+O \lb t^\epsilon \rb 
\end{align}
as $t \to +0$, where $\epsilon >0$ is some constant.
\item When $\conv \lb \lc w \rc \cup \tau_v \rb \nin \scrT$, we have
\begin{align}\label{eq:period-formula-2}
\int_{\Psi_t^w(\scE)}  \Omega_t^{l, v} 
=
O \lb t^\epsilon \rb
\end{align}
as $t \to +0$, where $\epsilon >0$ is some constant.
\end{enumerate}
\item For any $\scE \in K \lb Y_w \rb$, we set 
\begin{align}\label{eq:k}
    k:=\min \lc \lc d \rc \cup \lc k' \in \bZ_{\geq 0} \relmid \ch_{k'} \lb \scE \rb \neq 0 \rc \rc,
\end{align}
where $\ch_{k'} \lb \scE \rb \in A^{k'}(Y_w) \otimes_{\bZ} \bQ$ denotes the $k'$-th component of the Chern character $\ch(\scE)$.
One actually has $\ch_{k}(\scE) \in A^{k}(Y_w)$.
Let $a_\scE \in \MW^{k}(\Sigma_w) \cong A^{k}(Y_w)$ be the Minkowski weight corresponding to $(-1)^{d+1-k} \cdot \ch_{k}(\scE) \in A^{k}(Y_w)$.
Then there exists a $d$-cycle $C(\scE)$ in $N_\bR \times \lb \left. N_\bR \middle/ N \right.\rb$ satisfying the following:
\begin{enumerate}
    \item The image of $C(\scE)$ by the first projection $\pi_1$ \eqref{eq:1-proj} is contained in $\partial \nabla_w^{d-k}$ (\eqref{eq:sk-nabla} with $q=d-k$), in which the tropical cycle $c(a_\scE)$ is supported.
    \item For any $\scS \in \scrS_{d-k}^w$ ($=\scrS_q^w$ with $q=d-k$) and $n \in \mathring{\Delta}_\scS \subset \partial \nabla_w^{d-k}$, 
    the intersection $C(\scE) \cap \pi_1^{-1}(n)$ defines a $k$-cycle in the torus $\lc n \rc \times \left. \lb N_\bR \middle/ N \right. \rb$, and its homology class
    \begin{align}
\ld C(\scE) \cap \pi_1^{-1}(n) \rd 
\in H_{k} \lb \lc n \rc \times \lb \left. N_\bR \middle/ N \right. \rb, \bZ \rb 
\cong \bigwedge^{k} N
\end{align}
coincides with $a_\scE(\sigma[\scS]) \cdot f (\scS) \in \bigwedge^{k} N$, the coefficient of $\Delta_\scS$ in the tropical cycle $c(a_\scE)$.
\item The image by the map $h_t$ \eqref{eq:h_t} of a small perturbation of $C(\scE)$ 
whose $C^0$-norm is $O \lb(-\log t)^{-1} \rb$ is contained in the hypersurface $\mathring{Z}_t$, and represents the homology class $\Psi_t^w(\scE) \in H_d( \mathring{Z}_t, \bZ)$.
\end{enumerate}
\end{enumerate}
\end{theorem}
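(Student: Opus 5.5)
The plan is to reduce to the case treated in \cite{MR4782805} and then run an induction over the skeleta of $\partial \nabla_w$.

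\textbf{Construction of $\Psi_t^w$.} Since $K(Y_w)$ is generated by structure sheaves of torus orbit closures $\scO_{V(\sigma)}$, $\sigma \in \Sigma_w$ (the fan is unimodular and complete because $w$ is interior), it suffices to define lifts on these generators and check that the relations hold. In \cite{MR4782805}, a $d$-cycle in $\mathring{Z}_t$ was constructed that fibres over $\partial \nabla_w$. That cycle is a $T^{d}$-bundle of vanishing cycles over a small region near $\nabla_w$, glued along the faces. I take this as the image of $\scO_{Y_w}$ (up to the sign convention). The closed-orbit sheaves $\scO_{V(\sigma)}$ with $\dim\sigma=k$ are to be lifted by restricting the construction to the part of $\partial\nabla_w$ dual to the star of $\sigma$. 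Over a point of the face dual to $\tau\ni w$, the fibre of $\mathring{Z}_t$ under $\mathrm{Log}_t$ is, to leading order, a hyperplane $1+\sum_{m\in\tau\setminus w} (c_m/c_w)z^{m-w}=0$ in the subtorus $T_\tau$. Its $k$-dimensional cycles are tori and hemispherical chains realising the wedge $f(\scS)$.

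Rather than defining $\Psi_t^w$ on a basis, it is cleaner to use the mirror correspondence. Define the chain $C(\scE)$ in $N_\bR\times N_\bR/N$ as follows. Over each $\mathring{\Delta}_\scS$ with $\scS\in\scrS_{d-k}^w$, take the product of $\mathring{\Delta}_\scS$ with a $k$-torus of class $a_\scE(\sigma[\scS])f(\scS)$. Then fill in the lower-dimensional strata using the balancing condition. This is the point where Theorem~\ref{th:main1}(1) enters: the boundary of the torus-fibred chain over the strata of $c(a_\scE)$ vanishes exactly because $c(a_\scE)$ is a tropical cycle. Part (2b) then holds by construction, provided $k$ and $a_\scE$ are as in \eqref{eq:k}. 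Integrality of $\ch_k(\scE)$ follows from the fact that the first nonzero Chern character component of a class of rank/support codimension $k$ equals its codimension-$k$ cycle class. Part (2a) is also immediate from the construction.

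\textbf{Perturbation into $\mathring{Z}_t$.} For (2c), I would use the fact that, away from an $O((-\log t)^{-1})$ neighbourhood, $h_t^{-1}(\mathring{Z}_t)$ is $C^0$-close to the phase tropical hypersurface. I would then apply the local model, which is a pair of pants of dimension $d$ over each cell of $\scP$. In the cells relevant here, the coamoeba of the local hyperplane contains the needed torus fibres after a shift of $O((-\log t)^{-1})$. The cycle $C(\scE)$ therefore deforms into $\mathring{Z}_t$, which defines $\Psi_t^w(\scE)$. Additivity in $\scE$ has to be checked at the level of homology classes. I expect this to follow by comparing with the images of the generators $\scO_{V(\sigma)}$, using that the cycle class map $K(Y_w)\to H_d$ factors through a filtration compatible with the codimension.

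\textbf{Period asymptotics.} This is the main obstacle. For (1), I would first compute $\int \Omega_t^{l,v}$ over the lifts of the generators by the residue formula, converting $\Res\,\omega_t^{l,v}$ on the $T^{d}$-fibred chains into an integral over the $(d+1)$-dimensional tube. On the chart dual to $\nabla_w$, rescale $z\mapsto$ the coordinates $y_m=-(c_m/c_w)t^{\lambda_m-\lambda_w}z^{m-w}$, $m\in A_w$. The integrand becomes $\prod y_m^{p_m}/(1-\sum y_m)^l$ times $\prod dy/y$, up to $O(t^\epsilon)$ errors coming from monomials outside $\tau$. Next, decompose the domain according to the simplices $\tau\ni w$. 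On each local chart the integral is a Mellin-type integral of Beta/Gamma form, and its expansion in $\log t$ produces $t^{-\omega_\lambda^w}$ together with the Gamma factors $\Gamma(1+D_m^w)/\Gamma(1+\sigma^w)$. The Pochhammer factors coming from $p_m$ and $l$ give $E_{v,w}/(l-1)!$. When $\conv(\{w\}\cup\tau_v)\notin\scrT$, the monomial $z^v$ is not dominant on $\nabla_w$, so the integrand is $O(t^\epsilon)$ there, which gives (1b). The gluing of these local Gamma computations into a single integral over $Y_w$ is done by the Duistermaat--Heckman/localization-type identity on the toric variety $Y_w$, exactly as in \cite{MR4782805}. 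That identity expresses $\int_{Y_w} e^{\sum x_m D_m}\cdots$ as a sum over maximal cones. To pass from $\scO_{Y_w}$ to a general $\scE$, I would multiply by $\ch(\scO_{V(\sigma)})$ via the projection formula. In the integral, this corresponds to taking residues in the variables $x_m$, $\bR_{\ge0}(m-w)\in\sigma$, which is exactly what restricting the lift to the star of $\sigma$ does. The delicate points are the signs $(-1)^{d+p_w}$ and the branch choices $\arg(-c_m/c_w)$, which fix the $(2\pi\sqrt{-1})^{\deg/2}$ normalisation.
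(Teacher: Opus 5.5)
There is a genuine gap in part (2), and it carries over to (1). In your recipe $C(\scE)$ depends only on $a_\scE$, that is, on the leading component $\ch_k(\scE)$, plus a ``filling of the lower strata by balancing'' that you never pin down. But (2c) requires $C(\scE)$ to represent $\Psi_t^w(\scE)$, and the periods of that class in (1a) involve the whole of $\ch(\scE)$. For example, take a toric divisor $D$: $\scO_{Y_w}$ and $\scO_{Y_w}(-D)$ have the same $k=0$ and the same $a_\scE$. Yet by (1a) their periods differ in general, by those of $\Psi_t^w(\scO_D)$, which is a lift of a nonzero $(1,d-1)$-cycle.

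The missing data sit exactly in the ambiguity of your filling. Over $\mathring{\Delta}_\scS$ with $\scS\in\scrS_q^w$ and $q<d-k$, the fibre chain lives in the $(d-q)$-dimensional subtorus $\sigma[\scS]^\perp/(\sigma[\scS]^\perp\cap N)$. Its boundary determines it only up to adding multiples of the fundamental class, and these multiplicities must be, up to sign, $\deg(\ch_{d-q}(\scE)\cap[V(\sigma[\scS])])$. The paper fixes them as follows:
\begin{enumerate}
\item write $\scE=\sum_j l_j[L_j]$ with anti-nef line bundles (\pref{pr:iritani});
\item use as fibres the faces $P_{j,\scS}$ of the polytopes of $L_j^{-1}$, whose volumes are exactly these degrees \eqref{eq:vol-sum0};
\item only then subtract the boundaries $\partial(\Delta_\scS\times c_\scS)$ of \pref{lm:cscds}, whose existence uses $\ch_{<k}(\scE)=0$, to push the cycle onto $\partial\nabla_w^{d-k}$.
\end{enumerate}
Two smaller corrections. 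The cycle of \cite{MR4782805} is the phase-shifted positive real locus: a $d$-sphere with point fibres over $\partial\nabla_w$, i.e.\ a lift of a $(0,d)$-cycle, not a $T^d$-bundle. Also, the torus-direction displacement needed to land near $\mathring{Z}_t$ is the $O(1)$ phase shift $\phi$ of \eqref{eq:phase-shift}. That shift must be built into $C(\scE)$; only the remaining correction is $O((-\log t)^{-1})$.

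The period computation is also missing its key mechanism. Redoing the local Beta/Gamma integrals handles the untwisted cycle. Your passage to general $\scE$ (``restrict the lift to the star of $\sigma$, which corresponds to residues in the $x_m$'') is not substantiated. Restricting the sphere to the face of $\nabla_w$ dual to $\sigma$ does not even give a $d$-cycle. Moreover $\ch(\scO_{V(\sigma)})=\prod(1-e^{-D_m^w})$ (product over the rays of $\sigma$) has higher-order terms that a residue heuristic would not see.

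The idea you need is this. For an anti-ample $L$, with $P$ the polytope of $L^{-1}$, the lift $C(P)_t$ is the transport of the positive real locus of $\tilde{Z}_t^w$ in which the argument of $-k_{m,t}/k_{w,t}$ is shifted by $-2\pi\varphi_P(m-w)$ (\pref{pr:C(P)-transport}). Geometrically, translating the pieces $\Delta_\sigma$ by the vertices $n(P,\sigma)$ is a discrete phase shift. Then \cite[Theorem 1.1.2]{MR4782805}, evaluated on the shifted branch of $\log(-c_m/c_w)$, acquires exactly the factor $\prod_{m}\exp(2\pi\sqrt{-1}\varphi_P(m-w)D_m^w)=(2\pi\sqrt{-1})^{\deg/2}\ch(L)$. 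Part (1) then follows by linearity and \pref{pr:iritani}.

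Finally, you give no argument that $\Psi_t^w$ is well defined on $K(Y_w)$; the relations among your proposed generators are never checked. In the paper this comes from $C(\scE)_t\sim\sum_j l_jC(P_j)_t$ (\pref{pr:homologous}). It is combined with the observation that an expression of $0$ has $k=d$, so its cycle lies over the vertices $b_\rho$ with null-homologous fibres.
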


In this article, we write the image by the map $h_t$ of the small perturbation of $C(\scE)$ in \pref{th:main2}(2-c) as $C(\scE)_t \subset \mathring{Z}_t$, and call it a \emph{lift} of the tropical cycle $c(a_\scE)$.

\begin{theorem}\label{th:main3}
    For any none-zero Minkowski weight $a \in \MW^{d-q} \lb \Sigma_w \rb$, there exists a (non-unique) lift of the tropical cycle $c(a)$. 
\end{theorem}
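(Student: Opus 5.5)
The plan is to reduce the statement to \pref{th:main2}(2). Given a non-zero $a \in \MW^{d-q}\lb \Sigma_w \rb$, set $k:=d-q$. I will find a class $\scE \in K\lb Y_w\rb$ whose Chern character vanishes in degrees $<k$ and whose degree-$k$ component satisfies $(-1)^{d+1-k}\ch_k(\scE)=a$ under the isomorphism $\MW^{k}(\Sigma_w)\cong A^k(Y_w)$ of \cite[Theorem 3.1]{MR1415592}. For such $\scE$, the integer $k$ of \eqref{eq:k} is exactly $d-q$ and $a_\scE=a$. Then \pref{th:main2}(2) supplies a $d$-cycle $C(\scE)$ projecting into $\partial\nabla_w^{q}$ with fiber classes $a(\sigma[\scS])\cdot f(\scS)$ over $\mathring{\Delta}_\scS$, and $C(\scE)_t$ is a lift of $c(a)$.

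The main step is therefore the construction of $\scE$. Since $Y_w$ is a smooth complete toric variety ($\Sigma_w$ is unimodular and complete, being the normal fan of the bounded polytope $\nabla_w$), $A^\ast(Y_w)$ is generated by torus-invariant cycles. Every class in $A^k(Y_w)$ is an integer combination $\sum_{\sigma\in\Sigma_w(k)} n_\sigma [V(\sigma)]$ of orbit closures $V(\sigma)$, which are smooth of codimension $k$.

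For each such $V(\sigma)$, I take $\scE_\sigma:=\scO_{V(\sigma)}$, pushed forward from the closed embedding. By Grothendieck--Riemann--Roch, or more elementarily the Koszul resolution (since $V(\sigma)$ is a transversal intersection of the $k$ toric divisors $D_\rho$, $\rho\prec\sigma$), we have
\begin{align}
\ch(\scO_{V(\sigma)})=\prod_{\rho \prec \sigma,\ \rho\in\Sigma_w(1)} \lb 1-e^{-D_\rho}\rb .
\end{align}
This expression starts in degree $k$ with leading term $\prod D_\rho=[V(\sigma)]$. I then set
\begin{align}
\scE:=(-1)^{d+1-k}\sum_\sigma n_\sigma \scE_\sigma \in K(Y_w).
\end{align}
This gives $\ch_{k'}(\scE)=0$ for $k'<k$ and $(-1)^{d+1-k}\ch_k(\scE)$ equal to the prescribed class. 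Since $a\neq 0$, this class is non-zero, so $k$ is the minimum in \eqref{eq:k}. The case $k=d$ is allowed there, because \eqref{eq:k} includes $d$ in the set. Non-uniqueness is clear: adding to $\scE$ any class whose Chern character is concentrated in degrees $>k$ (e.g.\ $\scO_{V(\sigma')}$ with $\dim\sigma'>k$) does not change $a_\scE$. Such a change generally alters $\Psi^w_t(\scE)$, as the period formula \eqref{eq:period-formula-1} shows.

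The main obstacle I expect is bookkeeping rather than substance. First, I need to confirm that the isomorphism $\MW^{k}\cong A^k$ used in \pref{th:main2}(2) is the same one under which $[V(\sigma)]$ corresponds to the weight given by intersection numbers with orbit closures. Second, I need to confirm that the definition of ``lift'' after \pref{th:main2} requires exactly properties (2-a)--(2-c). If \pref{th:main2}(2) were only proved for $\scE$ from a specific generating family, I would instead choose $\scE$ as a $\bZ$-combination of that family. Such a choice is possible because the leading Chern character terms of the $\scO_{V(\sigma)}$ already span $A^k(Y_w)$ over $\bZ$.
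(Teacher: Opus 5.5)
Your proposal is correct and follows the same overall plan as the paper. Both reduce to \pref{th:main2}(2) by producing $\scE\in K(Y_w)$ with $\ch_{k'}(\scE)=0$ for $k'<d-q$ and $(-1)^{q+1}\ch_{d-q}(\scE)$ equal to the class of $a$. Both also start from the fact that $A^{d-q}(Y_w)$ is spanned by the products $[D_{\sigma,1}]\cdots[D_{\sigma,d-q}]=[V(\sigma)]$.

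The difference is the choice of $\scE$.
\begin{itemize}
\item The paper rewrites each $D_{\sigma,i}$ as a difference of ample divisors and expands. It takes $\scE$ to be a combination of structure sheaves of complete intersections of general sections. It computes $\ch(\scO_{X_{j,s}})=\sum_I(-1)^{\#I}\ch(\scO_{Y_w}(-D_{j,I}))$ by iterating short exact sequences, and then uses \pref{lm:divisor-intersection} to extract the lowest-degree term.
\item You take $\scO_{V(\sigma)}$ directly. The distinct toric divisors of a smooth cone already meet transversally, so the Koszul complex gives $\ch(\scO_{V(\sigma)})=\prod_{\rho\prec\sigma}(1-e^{-D_\rho})$. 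Its leading term is visibly $[V(\sigma)]$, since each factor starts in degree one.
\end{itemize}

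Your route is shorter. The ample splitting and the genericity of sections become unnecessary. \pref{lm:divisor-intersection} is replaced by the factorization $\sum_I(-1)^{\#I}e^{-D_I}=\prod_i(1-e^{-D_i})$, of which that lemma is the degree-by-degree content. What the paper's route buys is uniformity with \pref{cr:CY}, where the relevant sheaf is $\scO_X$ for a general complete intersection of nef divisors; there the same computation applies verbatim.

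One small point should be made explicit. To conclude that the integer $k$ of \eqref{eq:k} equals $d-q$, you need $\ch_{d-q}(\scE)\neq 0$ in $A^{d-q}(Y_w)\otimes\bQ$, not just in $A^{d-q}(Y_w)$. This holds because $A^{d-q}(Y_w)\cong\MW^{d-q}(\Sigma_w)$ is torsion-free.

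Your bookkeeping concerns are not an issue. \pref{th:main2}(2) is stated and proved for arbitrary $\scE\in K(Y_w)$. Moreover, $a_\scE$ is defined through the same isomorphism \eqref{eq:amw} under which $a$ corresponds to $\sum_\sigma n_\sigma[V(\sigma)]$.
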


As a special case, consider the following setting:
Suppose that the polytope $\Delta$ is reflexive, and let $\check{Z}$ be an anticanonical hypersurface of the toric variety $Y_{w=0}$, which is a Calabi--Yau hypersurface mirror to $Z_t$.
Let $r$ be an integer such that $1 \leq r \leq d$, and $\scD=\lc D_1, \cdots, D_r \rc$ be a set of nef toric divisors in the toric variety $Y_{w=0}$.
We write the closed subscheme defined by a general section of 
$\bigoplus_{i=1}^r \scO_{Y_{w=0}}(D_i)$ 
(resp. $\bigoplus_{i=1}^r \scO_{\check{Z}}(D_i)$)
as $X \subset Y_{w=0}$
(resp. $X' \subset \check{Z}$).
We further suppose $l=1, v=w(=0), k_w=-1, k_m=x^{\lambda_m}$ $(m \in A \setminus \lc w \rc)$.

\begin{corollary}\label{cr:CY}
In the above setting, we have 
\begin{align}\label{eq:period-formula-1'}
\int_{C(\scO_X)_t} \Omega_t^{l=1, v=0} 
=
(-1)^{d+1}
\int_{\check{Z}} 
t^{-\omega_{\lambda}^{w=0}}
\cdot 
\widehat{\Gamma}_{\check{Z}}
\cdot 
\lb 2\pi \sqrt{-1}\rb^{\deg/2} \ch \lb \scO_{X'} \rb
+O \lb t^\epsilon \rb
\end{align}
as $t \to +0$, where $\widehat{\Gamma}_{\check{Z}}$ is the gamma class of the Calabi--Yau hypersurface $\check{Z}$.
The cycle $C(\scO_X)_t$ is a lift of the tropical $(r, d-r)$-cycle $c(a_\scD)$, where $a_{\scD} \in \MW^{r}(\Sigma_{w=0})$ is the Minkowski weight corresponding to 
$(-1)^{d+1-r} \cdot \ld D_1 \rd \cdots \ld D_r \rd \in A_{d+1-r}(Y_{w=0})=A^r(Y_{w=0})$.
\end{corollary}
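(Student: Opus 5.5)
The corollary is a specialization of \pref{th:main2} with $w=0$, $l=1$, $v=0$ and $\scE=\scO_X\in K(Y_{w=0})$. It has two parts: the period formula \eqref{eq:period-formula-1'}, and the fact that $C(\scO_X)_t$ lifts $c(a_\scD)$. I would prove them in that order.

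For the period formula, first note that $\tau_v=\lc 0\rc$, so $\conv(\lc w\rc\cup\tau_v)=\lc 0\rc\in\scrT$ and case (1-a) of \pref{th:main2} applies. With $l=1$ and $v=w$ we have $p_w=1$ and no other $p_m$, so $E_{v,w}=\sigma^w$ and $(l-1)!=1$. The prefactor is therefore $(-1)^{d+1}$. Next, $k_w=-1$ and $k_m=x^{\lambda_m}$ give $c_w=-1$ and $c_m=1$, hence $-c_m/c_w=1$. Choosing the branch $\arg=0$ makes $\prod_m(-c_m/c_w)^{-D_m^w}=1$. Since $\Delta$ is reflexive, $A_{w=0}$ is the set of boundary lattice points, and $\sigma^w=\sum_m D_m^w$ is the anticanonical class, i.e.\ the Poincar\'e dual class of $\check Z$. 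So the integrand becomes
\[
t^{-\omega_\lambda^{w}}\cdot\widehat\Gamma_w\cdot[\check Z]\cdot(2\pi\sqrt{-1})^{\deg/2}\ch(\scO_X),
\]
and by the projection formula its integral over $Y_{w}$ equals the integral over $\check Z$ of the restriction of $t^{-\omega_\lambda^{w}}\cdot\widehat\Gamma_w\cdot(2\pi\sqrt{-1})^{\deg/2}\ch(\scO_X)$.

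It remains to identify the restricted classes. As recalled above (\cite[Section 4.1]{MR4194298}), $\widehat\Gamma_w|_{\check Z}=\widehat\Gamma_{\check Z}$. For the Chern character, $X$ is cut out by a general section of $\bigoplus_i\scO(D_i)$ with the $D_i$ nef, so the Koszul complex resolves $\scO_X$. This gives
\[
\ch(\scO_X)=\prod_{i=1}^r\bigl(1-e^{-D_i}\bigr).
\]
The Koszul complex restricted to $\check Z$ resolves $\scO_{X'}$, because a general section stays regular on $\check Z$. Hence $\ch(\scO_X)|_{\check Z}=\ch(\scO_{X'})$. The factor $(2\pi\sqrt{-1})^{\deg/2}$ commutes with restriction, and this yields \eqref{eq:period-formula-1'}. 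The main obstacle is the Koszul and regularity argument: $Y_{w=0}$ may be singular, so I would work on the simplicial toric variety with rational coefficients, use that the class of $\scO_X$ in $K(Y_{w=0})$ is the alternating sum of the Koszul terms, and use Bertini for base-point-free systems to get the codimensions right.

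For the lift statement, I apply \pref{th:main2}(2). The lowest-degree term of $\prod_i(1-e^{-D_i})$ is $\prod_i D_i$ in degree $r$. If $\prod_i D_i\neq0$, then $k=r$ in \eqref{eq:k}, and $a_{\scO_X}$ corresponds to $(-1)^{d+1-r}[D_1]\cdots[D_r]$, which is exactly $a_\scD$. Parts (2-a) through (2-c) of \pref{th:main2} then say that $C(\scO_X)_t$ is a lift of $c(a_\scD)$, a tropical $(r,d-r)$-cycle since $q=d-k=d-r$. If instead $\prod_i D_i=0$, the cycle $c(a_\scD)$ is zero, and I would note separately that the statement holds vacuously or by convention.
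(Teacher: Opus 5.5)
Your proposal is correct and follows the paper's route. The period formula is the specialization of \pref{th:main2}(1-a): here $E_{v,w}=\sigma^w=[\check Z]$, the phase factors equal $1$, and $\widehat\Gamma_w$ and $\ch(\scO_X)$ restrict to $\widehat\Gamma_{\check Z}$ and $\ch(\scO_{X'})$. The lift claim follows from $k=r$ and $\ch_r(\scO_X)=[D_1]\cdots[D_r]$, which your Koszul product $\prod_i(1-e^{-D_i})$ gives more directly than the paper's inclusion--exclusion via \pref{lm:divisor-intersection}.

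Two harmless slips. First, $Y_{w=0}$ is smooth because $\Sigma_{w}$ is unimodular, so the rational-coefficient workaround is unnecessary. Second, $A_{w=0}$ need not contain every boundary lattice point of $\Delta$; $\sigma^w$ is anticanonical anyway, since it is the sum of all torus-invariant prime divisors of $Y_w$.
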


The formula \eqref{eq:period-formula-1'} coincides with that of the mirror symmetric Gamma conjecture (\pref{cj:gamma}) for the structure sheaf $\scO_{X'}$ of the closed subscheme $X' \subset \check{Z}$.
The discrepancy of the sign stems from the choice of the orientation of the cycle.

In \cite{MR4194298, MR4782805}, we computed the asymptotics of the periods for cycles constructed by transporting the cycle obtained as the positive real locus of the hypersurface.
These cycles are regarded as lifts of tropical cycles of degree $(0, d)$.
The homology class $\Phi_t^w(\scE)$ in \pref{th:main2} is written as a linear combination of the cycles considered in \cite{MR4194298, MR4782805} (\pref{sc:homology}), and the formulas of the asymptotics of period integrals in \pref{th:main2}(1) are derived from the results of \cite{MR4194298, MR4782805} (\pref{sc:proof-main2}).
The cycles considered in \cite{MR4194298} are expected to be isotopic to Lagrangian cycles mirror to ambient line bundles of the mirror manifold in the homological mirror symmetry (cf.~\cite[Remark 1.3]{MR4194298}).
We expect that the cycle $C(\scO_X)_t$ in \eqref{eq:period-formula-1'} is also isotopic to a Lagrangian cycle mirror to the structure sheaf $\scO_{X'}$ similarly.
The construction of the small perturbation in \pref{th:main2}(2-c) is also based on the technique in \cite[Section 5.2]{MR4194298}.

The Hodge structures of complex hypersurfaces in toric varieties have been studied in many papers (e.g.~\cite{MR0260733, MR1290195, MR1733735, MR2019444}).
The mirror symmetric Gamma conjecture originates from Hosono's conjecture \cite[Conjecture 2.2]{MR2282969}.
For development of the study of the mirror symmetric Gamma conjecture, we refer the reader to \cite{MR2553377, MR3112512, MR4194298, Iri23, FWZ23,  MR4922780, AFW25, You25}.
For research of period integrals, Hodge structures, or oscillatory integrals via tropical geometry, see \cite{MR2576286, MR4194298, MR4179831, MR4484542, BL22, FWZ23, MR4782805, MR4950977,  MR4922780}.

The assumption that the triangulation $\scrT$ is unimodular corresponds to the smoothness assumption in tropical geometry.
In general, there are discrepancies between tropical cohomology groups and ordinary cohomology groups if we do not impose the unimodularity assumption (cf.~\cite{MR2681794}).
In \cite{AMH25}, periods of one-parameter degenerations of hypersurfaces in projective spaces are computed without imposing the unimodularity assumption.
In this setting, the periods are described not only in terms of the gamma function but also in terms of Dirichlet $L$-functions.

\subsection{Intersection numbers of lifts}

We also have the following application of \pref{th:main2} to computation of intersection numbers of cycles:

\begin{theorem}\label{th:main4}
Let $w_1, w_2 \in \rint(\Delta)$, 
and $a_1 \in \MW^{d-q} \lb \Sigma_{w_1} \rb, a_2 \in \MW^{q} \lb \Sigma_{w_2} \rb$.
Let further $C(a_1)_t, C(a_2)_t$ be lifts of the tropical cycles $c(a_1), c(a_2)$.
Then one has
\begin{align}\label{eq:intersection-compare}
    \la C(a_1)_t, C(a_2)_t \ra=(-1)^{s} \cdot \la c(a_1), c(a_2)\ra,
\end{align}
where the left-hand side denotes the ordinary intersection number of the cycles $C(a_1)_t, C(a_2)_t$, and $s:=d(d+1)/2+d-q$.
\end{theorem}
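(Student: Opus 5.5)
We plan to compute both sides locally and compare them point by point. We first move the tropical cycles $c(a_1)$ and $c(a_2)$ into transverse position, using the same fan displacement as in the proof of \pref{th:main1}(2). This displacement is realised by translating the chosen points $b_\sigma$ for $\Sigma_{w_2}$ by a generic small vector. After the move, $c(a_1)$ and $c(a_2)$ meet in finitely many points $n$. Each such point lies in the relative interior of a top simplex $\mathring{\Delta}_{\scS_1}$ of $c(a_1)$ and of a top simplex $\mathring{\Delta}_{\scS_2}$ of $c(a_2)$, and these simplices have complementary dimensions $q$ and $d-q$ inside the $d$-dimensional tropical hypersurface. By definition (\pref{sc:tropical-homology}), the tropical intersection number is a sum over these points. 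Each contribution is the local intersection sign of the two simplices, times the pairing $f(\scS_1)\wedge f(\scS_2)$ in $\bigwedge^d N$, evaluated against the integral volume form of the tangent space of the relevant facet of $X(\trop(f))$, and weighted by $a_1(\sigma[\scS_1])\, a_2(\sigma[\scS_2])$.

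Next, we perform the same displacement on the lifts. The lift $C(a_i)_t$ is $h_t$ of a perturbation of $C(a_i)$, where $C(a_i)$ is the $d$-cycle from \pref{th:main2}(2) (or \pref{th:main3}). The ordinary intersection number depends only on homology classes, so we may isotope $C(a_2)_t$ inside $\mathring{Z}_t$. We do this by lifting the translation of the base to an isotopy of $C(a_2)$ in $N_\bR\times(N_\bR/N)$ and then perturbing it into $\mathring{Z}_t$. The perturbation is $O((-\log t)^{-1})$ in $C^0$, so for small $t$ the intersections of the lifts lie over small neighbourhoods of the tropical intersection points $n$. Over such a neighbourhood, $\mathring{Z}_t$ is close under $h_t$ to (facet)$\times T$, where $T$ is the $d$-torus $(N_\bR/N)$ modulo the fibre direction normal to the facet. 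In these coordinates, $C(a_i)$ is locally (simplex)$\times$(a $k_i$-cycle in the fibre torus) whose class is $a_i(\sigma[\scS_i])\,f(\scS_i)$, by \pref{th:main2}(2-b). The local intersection number of two such product cycles is the base intersection sign times the fibre intersection number in the torus, and the fibre intersection number equals $\langle f(\scS_1)\wedge f(\scS_2),\mathrm{vol}\rangle$. Summing over the points $n$ reproduces $\langle c(a_1),c(a_2)\rangle$, up to a global sign.

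The global sign comes from orientation conventions. There is a Koszul-type sign from reordering base and fibre factors: $(B_1\times F_1)\cdot(B_2\times F_2)$ acquires $(-1)^{\dim F_1\dim B_2}$. There is also the complex orientation of $\mathring{Z}_t$, which under $h_t$ is compared with the product orientation of base $\times$ fibre; this contributes $(-1)^{d(d-1)/2}$ for the standard $\bC^d=\bR^d\oplus \sqrt{-1}\bR^d$ comparison. Combining $(-1)^{d(d-1)/2}$ with $(-1)^{(d-q)(d-q)}$ and the remaining conventions of the paper, including the relation of $\mathrm{vol}(N)$ to the fibre torus and the normal direction of the hypersurface, should produce $s=d(d+1)/2+d-q$.

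The main obstacle is making the local model precise. We must show that, near a tropical intersection point, the perturbed lifts really are $C^0$-close to product cycles in a chart identifying $\mathring{Z}_t$ with (facet)$\times T^d$. We must also check that no extra intersections arise near lower-dimensional strata of $X(\trop(f))$, where the hypersurface is not a product. For this we will use genericity of the displacement to keep all intersection points in the interiors of facets of $X(\trop(f))$. We will also use the explicit construction of the perturbation from \cite[Section 5.2]{MR4194298}, which makes the lift exactly the product form over facet interiors away from an $O((-\log t)^{-1})$ neighbourhood of the codimension-one skeleton. Pinning down the sign $(-1)^s$ then reduces to a careful single-point orientation check, which we will verify in the simplest case $q=0$ and extend by the Koszul rule.
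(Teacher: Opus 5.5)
Your route is the paper's. You displace $c(a_2)$ by the fan displacement of \pref{sc:main1-2} and move the base component of $C(a_2)$ accordingly while keeping its fibres. You then push into $\mathring{Z}_t$ by $r_{2,t}\circ r_{1,t}$ and $h_t$, and note that the lifts meet only over the tropical intersection points, which lie in facet interiors. There the two-term approximation $1+\frac{c_m}{c_w}t^{\lambda_m-\lambda_w}z^{m-w}\approx 0$ and the implicit function theorem identify $\mathring{Z}_t$ locally with a piece of $\rho^\perp\times\bigl(\rho^\perp/(N\cap\rho^\perp)\bigr)$. In that chart the local multiplicity is the base sign times $\Omega_\rho\bigl(a_1(\sigma[\scS])f(\scS)\wedge a_2(\sigma[\scS'])f(\scS')\bigr)$ times the Koszul sign $(-1)^{(d-q)^2}$.

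Two descriptions need correcting. First, the displacement is not a translation of the points $b_\sigma$ in $N_\bR$, which would leave $\partial\nabla_{w_2}$; it is the pull-back by $\phi_{w_2}$ of translating the cones by a generic $m_0\in M_\bR$. Second, the local fibre torus is the subtorus $\rho^\perp/(N\cap\rho^\perp)$ of $N_\bR/N$, not a quotient.

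The genuine gap is the sign, which is the whole content of the exponent $s$. Your explicit factors $(-1)^{d(d-1)/2}$ and $(-1)^{(d-q)^2}$ multiply to $(-1)^{s+d}$. You assign the missing $(-1)^d$ to ``remaining conventions'' such as $\mathrm{vol}(N)$ or the normal direction, but neither contributes:
\begin{itemize}
\item $\mathrm{vol}(N)$ enters both sides only through the same fibre classes $a_i f(\scS_i)$.
\item The orientation of $\mathring{Z}_t$ near the point is the complex one. A holomorphic chart carries it to the complex orientation of $\{z^{m-w}=1\}$, so no normal-direction sign appears.
\end{itemize}
The missing factor comes from $h_t$ itself, which is not the standard identification $\log|z|+\sqrt{-1}\arg z$. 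Write $z^{m_i}=r_ie^{\sqrt{-1}\theta_i}$, $n_i=\la m_i,n\ra$, $n_i'=\la m_i,n'\ra$. Then $r_i=t^{n_i}$ and $\theta_i=2\pi n_i'$, so
\[
h_t^\ast\Bigl(\bigwedge_{i=1}^d dr_i\wedge d\theta_i\Bigr)=(-1)^{d+d(d-1)/2}\Bigl(\bigwedge_{i=1}^d(-t^{n_i}\log t)\,dn_i\Bigr)\wedge\Bigl(\bigwedge_{i=1}^d 2\pi\, dn_i'\Bigr),
\]
with $-t^{n_i}\log t>0$ because $0<t<1$. Each radial direction is reversed. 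So the complex orientation is $(-1)^{d(d+1)/2}$ times the product orientation of base and fibre, not $(-1)^{d(d-1)/2}$ times it. Then $(-1)^{d(d+1)/2}\cdot(-1)^{(d-q)^2}=(-1)^s$ exactly, with nothing else entering. Your planned check at $q=0$ will only find this if it uses $h_t$ with $\log t<0$ rather than the standard $\bC^d=\bR^d\oplus\sqrt{-1}\bR^d$ comparison.
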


Notice that although lifts of tropical cycles are not unique, the intersection number $\la C(a_1)_t, C(a_2)_t \ra$ in \pref{th:main4} does not depend on the choice of lifts.
By combining \pref{th:main4} and \pref{th:main1}(2), we can combinatorially (or tropically) compute the intersection number of lifts of tropical cycles for hypersurfaces.

In the forthcoming paper \cite{RZ21} announced in \cite{MR4294796}, they construct, for a given integral affine manifold with singularities $B$, a topological SYZ fibration $X \to B$ and lifts of tropical cycles in $B$ of arbitrary degree.
The proof of \pref{th:main4} is based on basically the same observation as the one in \cite[Theorem 7]{MR4347312} which gives the correspondence between the intersection number of tropical cycles and that of their lifts, for the topological SYZ fibrations of \cite{MR4294796}.

\subsection{Organization of this article}

After recalling the basic notions such as tropical homology groups, tropical cycles, and Minkowski weights in \pref{sc:prel}, we prove \pref{th:main1} in \pref{sc:proof1}.
We construct lifts of tropical cycles and prove \pref{th:main2} in \pref{sc:proof2}.
Then we prove \pref{th:main3} and \pref{cr:CY} in \pref{sc:proof3}.
\pref{th:main4} is lastly proved in \pref{sc:proof4}.

\section{Preliminaries}\label{sc:prel}

\subsection{Tropical homology}\label{sc:tropical-homology}

We recall the definition of tropical homology groups of \cite{MR3330789, MR3961331}.
We focus on the case where tropical spaces are rational polyhedral complexes in Euclidean spaces, since the tropical hypersurfaces considered in this article are of this type.
Tropical homology groups are defined for more general tropical spaces.
See \cite{MR3330789, MR3961331}.
This subsection is written based on \cite{MR3330789} and \cite[Section 2]{MR3894860}.

Let $d$ be a positive integer.
A \emph{rational polyhedron} in $\bR^{d}$ is the intersection of finitely many half spaces of the form 
\begin{align}
\lc x \in \bR^{d} \relmid \la m, x \ra \leq a \rc
\end{align}
with $m \in \lb \bZ^{d} \rb^\ast$ and $a \in \bR$.
Let $n$ be a non-negative integer, and let $\scrP$ be a rational polyhedral complex of pure dimension $n$ in $\bR^{d}$, i.e., a finite set of rational polyhedra in $\bR^{d}$ satisfying
\begin{itemize}
\item For any $P \in \scrP$, all faces of $P$ are also in $\scrP$. 
\item For $P_1, P_2 \in \scrP$, the intersection $P_1 \cap P_2$ is a face of both $P_1$ and $P_2$.
\item All maximal polyhedra in $\scrP$ are of dimension $n$.
\end{itemize}
We write the support of $\scrP$ as $X:=\bigcup_{P \in \scP} P \subset \bR^{d}$, and define $\dim X:=n$.

Let $p, q \in \bZ_{\geq 0}$.
The \emph{$p$-th integral multi-tangent space} $\scF_p^\bZ(P)$ of $\scrP$ at $P \in \scrP$ is defined by
\begin{align}
    \scF_p^\bZ(P):=\lb\sum_{P'\in \scrP, P'\succ P} \bigwedge^p T(P') \rb \cap  \bigwedge^p \bZ^d,
\end{align}
where $T(P) \subset \bR^{d}$ is the subspace generated by vectors tangent to $P$.
If $P_1 \prec P_2$, we have the inclusion
\begin{align}\label{eq:inclusion}
    \iota_{P_1, P_2} \colon \scF_p^\bZ(P_2) \hookrightarrow \scF_p^\bZ(P_1).
\end{align}
Let $\Delta^q$ denote the standard $q$-simplex, and let $C_q(P)$ be the free abelian group generated by singular $q$-simplices $\Delta^q \to X$ satisfying  
\begin{itemize}
\item The image of the relative interior of $\Delta^q$ is contained in the relative  interior of $P$.
\item The image of the relative interior of any face of $\Delta^q$ is contained in the relative interior of a face of $P$.
\end{itemize}
By abuse of notation, we identify the domain $\Delta^q$ and the image of a singular $q$-simplex $\Delta^q \to X$ with the singular $q$-simplex itself in the following.
We set
\begin{align}
    C_{q}(X, \scF_p^\bZ):=\bigoplus_{P \in \scrP} \scF_p^\bZ(P) \otimes_\bZ C_q(P),
\end{align}
and consider the boundary operator
\begin{align}
    \partial \colon C_{q}(X, \scF_p^\bZ) \to C_{q-1}(X, \scF_p^\bZ),
\end{align}
which is defines as the ordinary boundary operator composed with the inclusion \eqref{eq:inclusion}.
The \emph{tropical homology group} is the homology of the complex $\lb C_{\bullet}(X, \scF_p^\bZ), \partial \rb$, and is denoted by
\begin{align}
    H_q\lb X, \scF_p^\bZ \rb:=H_q \lb C_{\bullet}(X, \scF_p^\bZ), \partial \rb.
\end{align}
A representative of a class of the tropical homology group $H_q\lb X, \scF_p^\bZ \rb$ is called a \emph{tropical $(p, q)$-cycle}.
We say that a tropical $(p, q)$-cycle is \emph{transversal} if for any singular $q$-simplex $\Delta^q \to X$ appearing in the cycle and any face $F \prec \Delta^q$, the dimension of the polyhedron $P \in \scrP$ containing the image of $\rint(F)$ in its relative interior is greater than or equal to $\dim X-(q-\dim F)$ (\cite[Definition 6.4]{MR3330789}).

We recall the intersection product for tropical cycles in the following.
See \cite[Section 6.2]{MR3330789} for more details.
Suppose $p+q=\dim X$.
Let $c_a=\sum_{\Delta} a_\Delta \cdot \Delta \in C_{q}(X, \scF_p^\bZ)$ and $c_b=\sum_{\Delta'} b_{\Delta'} \cdot \Delta' \in C_{p}(X, \scF_q^\bZ)$ be transversal tropical $(p, q), (q, p)$-cycles, where $\Delta \in C_q(P), \Delta' \in C_p(P')$ and $a_\Delta \in \scF_p^\bZ(P), b_{\Delta'} \in \scF_q^\bZ(P')$ for some $P, P' \in \scrP$.
We say that these are a \emph{transversal pair} if for any faces $F \prec \Delta, F' \prec \Delta'$ of any simplices appearing in $c_a, c_b$, whose relative interiors are contained in the relative interior of the same polyhedron $P'' \in \scrP$, the relative interiors of $F$ and $F'$ are transversal in $\rint (P'')$ (\cite[Definition 6.5]{MR3330789}).

Suppose that simplices $\Delta \in C_q(P), \Delta' \in C_p(P')$ 
appearing in a transversal pair $c_a=\sum_{\Delta} a_\Delta \cdot \Delta \in C_{q}(X, \scF_p^\bZ)$, $c_b=\sum_{\Delta'} b_{\Delta'} \cdot \Delta' \in C_{p}(X, \scF_q^\bZ)$ intersect.
Then we have $P=P'$, and $P=P'$ is a maximal polyhedron in $\scrP$.
Furthermore, the intersection points of $\Delta$ and $\Delta'$ are contained in $\rint(P)=\rint(P')$.
For an intersection point $x \in \Delta \cap \Delta'$, let $\Omega_x$ denote the integral volume form on $\rint(P)$, which defines the orientation on $\rint(P)$ 
under which the simplices $\Delta, \Delta'$ intersect positively at the point $x$.
The \emph{tropical intersection number} $\la c_a, c_b \ra$ is defined by
\begin{align}\label{eq:tropical-intersection}
    \la c_a, c_b \ra:=\sum_{P \in \scrP} 
    \sum_{\substack{\Delta \in C_q(P) \\ \Delta' \in C_p(P)}} 
    \sum_{x \in \Delta \cap \Delta'} 
    \Omega_x 
    \lb a_\Delta \wedge b_{\Delta'} \rb,
\end{align}
where $\Delta \in C_q(P), \Delta' \in C_p(P)$ run over all simplices appearing in the tropical cycles $c_a, c_b$.
In this article, we call the integer $\Omega_x \lb a_\Delta \wedge b_{\Delta'} \rb$ the \emph{tropical multiplicity} at the intersection point $x$.

\begin{remark}
The tropical intersection number of a transversal pair of tropical cycles is also defined for more general tropical spaces (cf.~\cite[Section 6.2]{MR3330789}).
It has been proved that when tropical spaces are compact and smooth (in the sense of \cite[Definition 1.14]{MR3330789}), 
every pair of tropical homology classes is represented by a transversal pair of cycles, and the pairing of taking tropical intersection numbers descends to a pairing on the tropical homology groups (\cite[Corollary 6.11, Proposition 6.13]{MR3330789}).
\end{remark}

\begin{remark}
Tropical intersection numbers are also defined for tropical cycles in integral affine manifolds with singularities \cite{MR4347312}.
The definition is analogous to the above one.
See Section 6 of Introduction in loc.cit.
\end{remark}

\subsection{Minkowski weights}

Let $N, M$ be free $\bZ$-modules of rank $d+1$ dual to each other, which we considered in \pref{sc:intro}.
Let further $\Sigma$ be a complete fan in $M_\bR$, corresponding to a complete toric variety $Y_\Sigma$.
The group $\MW^{k} \lb \Sigma \rb$ of \emph{Minkowski weights} of codimension $k$ on $\Sigma$ of \cite{MR1415592} $(0 \leq k \leq d+1)$ is defined by
\begin{align}\label{eq:minkowski}
    \MW^{k} \lb \Sigma \rb:=
    \lc a \colon \Sigma (d+1-k) \to \bZ 
    \relmid 
    \forall \tau \in \Sigma(d-k), 
    \sum_{\sigma \in \Sigma(d+1-k), \sigma \succ \tau} a(\sigma) \cdot m_{\sigma, \tau} \in \bR \cdot \tau \rc,
\end{align}
where $m_{\sigma, \tau} \in \sigma \cap M$ is a point which represents the primitive generator of the ray $\lb \sigma+\bR \cdot \tau \rb/ \bR \cdot \tau$.
The group $\MW^{k} \lb \Sigma \rb$ of Minkowski weights is canonically isomorphic to the Chow cohomology group $A^{k}(Y_\Sigma)$ of the toric variety $Y_\Sigma$ via
\begin{align}\label{eq:amw}
    A^k(Y_\Sigma) \to \MW^{k}\lb \Sigma \rb, \quad 
    \alpha \mapsto \lb \sigma \mapsto \deg \lb \alpha \cap \ld V(\sigma) \rd\rb\rb,
\end{align}
where $V(\sigma) \subset Y_\Sigma$ is the closure of the torus orbit corresponding to the cone $\sigma \in \Sigma(d+1-k)$ (\cite[Theorem 3.1]{MR1415592}).

\begin{theorem}{\rm(\cite[Theorem in Introduction]{MR1415592})}\label{th:MW}
Let $a_1 \in \MW^p(\Sigma)$, 
$a_2 \in \MW^q(\Sigma)$ be Minkowski weights of codimensions $p$ and $q$ $(0 \leq p, q \leq d+1)$.
The Minkowski weight $a_1 \cup a_2 \in \MW^{p+q}(\Sigma)$ corresponding to the cup product of the classes defined by $a_1$ and $a_2$ in the Chow cohomology ring $A^\ast (Y_\Sigma)$ is given by
\begin{align}\label{eq:MW-cup}
\lb a_1 \cup a_2 \rb(\rho)
=
\sum_{\sigma, \sigma'} 
a_1(\sigma) \cdot a_2(\sigma') \cdot
\ld M : \bZ(M \cap \sigma)+\bZ(M \cap \sigma') \rd
\quad
(\rho \in \Sigma (d+1-p-q)),
\end{align}
where $\ld M : \bZ(M \cap \sigma)+\bZ(M \cap \sigma') \rd$ is the index of the sublattice $\bZ(M \cap \sigma)+\bZ(M \cap \sigma') \subset M$, and the sum is taken over
\begin{align}\label{eq:cone-pair}
    \lc (\sigma, \sigma') \in \Sigma (d+1-p) \times \Sigma (d+1-q) \relmid 
    \sigma \succ \rho, \sigma' \succ \rho,
    \sigma \cap (m_0+\sigma') \neq \emptyset \rc,
\end{align}
which is determined by the choice of a generic vector $m_0 \in M_\bR$.
\end{theorem}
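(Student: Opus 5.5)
The plan is to follow Fulton--Sturmfels. We reduce to the top-degree case, write the cup product as pullback along the diagonal, and then degenerate the diagonal by a generic one-parameter subgroup of the torus. The limit of the diagonal is a sum of orbit closures of $Y_\Sigma\times Y_\Sigma$, with multiplicities given by lattice indices. In the present setup the fan lives in $M_\bR$, so the torus is $M_{\bC^\ast}$ and its cocharacters are elements of $M$. A generic $m_0$ can be perturbed to a rational one without changing the set \eqref{eq:cone-pair}, so we take $m_0\in M_\bQ$ and, after scaling, $m_0\in M$.

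\emph{Step 1: reduction to $\rho=0$.} By \eqref{eq:amw}, $(a_1\cup a_2)(\rho)=\deg\bigl((\alpha_1\cup\alpha_2)\cap[V(\rho)]\bigr)$, where $\alpha_i\in A^\ast(Y_\Sigma)$ is the class of $a_i$. Restriction to $V(\rho)$ is a ring map $A^\ast(Y_\Sigma)\to A^\ast(V(\rho))$, and $V(\rho)$ is the complete toric variety of the star fan of $\rho$ in $M_\bR/\bR\rho$. On Minkowski weights this restriction sends $a_i$ to its values on cones containing $\rho$. The lattice index in \eqref{eq:MW-cup} is compatible with passing to $M/(M\cap\bR\rho)$, because $\sigma,\sigma'\succ\rho$. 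The condition $\sigma\cap(m_0+\sigma')\ne\emptyset$ becomes the same condition for the image of $m_0$, which is still generic. Hence it suffices to treat the case $p+q=d+1$, $\rho=\{0\}$, where the claim is
\[
\deg(\alpha_1\cup\alpha_2)=\sum_{(\sigma,\sigma')} a_1(\sigma)\,a_2(\sigma')\,[M:\bZ(M\cap\sigma)+\bZ(M\cap\sigma')],
\]
the sum running over $\sigma\in\Sigma(d+1-p)$, $\sigma'\in\Sigma(d+1-q)$ with $\sigma\cap(m_0+\sigma')\neq\emptyset$.

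\emph{Step 2: diagonal and external product.} We have $\deg(\alpha_1\cup\alpha_2)=\deg\bigl((\alpha_1\times\alpha_2)\cap[\delta]\bigr)$, where $\delta\subset Y_\Sigma\times Y_\Sigma$ is the diagonal and $Y_\Sigma\times Y_\Sigma$ is the toric variety of the product fan $\Sigma\times\Sigma$. Let $\lambda_{m_0}$ be the cocharacter $u\mapsto(1,\lambda_{m_0}(u))$ of the product torus, and let $\delta_u$ be the translate of $\delta$ by $(1,\lambda_{m_0}(u))$. All $\delta_u$ are rationally equivalent to $\delta$, and therefore so is the flat limit $\delta_0=\lim_{u\to 0}\delta_u$.

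\emph{Step 3: computing the limit.} This is the main obstacle, and I would handle it orbit by orbit. The limit $\delta_0$ is torus-invariant of dimension $d+1$. Its components are therefore orbit closures $V(\sigma\times\sigma')$ with $\dim\sigma+\dim\sigma'=d+1$.

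To find which components occur, use the torus chart for $\sigma\times\sigma'$. The translate of the diagonal torus by $(1,\lambda_{m_0}(u))$ meets this chart with a limit point in the orbit $O(\sigma\times\sigma')$ exactly when some point $(x,x+s\,m_0)$ of the antidiagonal direction lies in $\rint(\sigma)\times\rint(\sigma')$, up to the sign convention for limits. This happens precisely when $\rint\sigma\cap(m_0+\rint\sigma')\ne\emptyset$. Genericity of $m_0$ gives two consequences. First, this is equivalent to $\sigma\cap(m_0+\sigma')\ne\emptyset$ with complementary dimensions. Second, the intersection is transverse, a single point.

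The multiplicity of $V(\sigma\times\sigma')$ in $\delta_0$ is computed in the local affine chart $U_{\sigma\times\sigma'}$. There the limit of the subtorus $\{(x,x)\}$ is the orbit closure counted with multiplicity equal to the index of $\bZ(M\cap\sigma)+\bZ(M\cap\sigma')$ in $M$, since this is the number of components of the intersection of the diagonal torus with the stabilizer subtorus. Altogether,
\[
[\delta]=\sum_{(\sigma,\sigma')}[M:\bZ(M\cap\sigma)+\bZ(M\cap\sigma')]\,[V(\sigma\times\sigma')]
\]
in $A_{d+1}(Y_\Sigma\times Y_\Sigma)$, with the sum over the same pairs as in Step 1. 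If $\Sigma$ is not simplicial, I would first pass to a simplicial refinement and use pullback invariance of Minkowski weights, although the local computation above should work directly.

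\emph{Step 4: conclusion.} By the definition \eqref{eq:amw}, applied to the product,
\[
\deg\bigl((\alpha_1\times\alpha_2)\cap[V(\sigma\times\sigma')]\bigr)=a_1(\sigma)\,a_2(\sigma'),
\]
because $V(\sigma\times\sigma')=V(\sigma)\times V(\sigma')$. Summing over the pairs in Step 3 gives \eqref{eq:MW-cup}. Independence of the choice of $m_0$ follows automatically, since the left-hand side does not involve $m_0$.
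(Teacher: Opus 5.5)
The paper does not prove this statement; it quotes it from Fulton--Sturmfels. Your outline (reduce to $\rho=\{0\}$, pass to the diagonal, degenerate by a generic cocharacter, read off the limit as orbit closures weighted by lattice indices) is their argument. Steps 1, 2 and 4 are fine. In particular, the condition $\sigma\cap(m_0+\sigma')\neq\emptyset$ and the index pass unchanged to $M_\bR/\bR\rho$ when $\sigma,\sigma'\succ\rho$.

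\textbf{The gap: the opening of Step 3.}

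\emph{Why the inference fails.} A priori $\delta_0$ is invariant only under the diagonal torus and the image of your cocharacter. That is a $(d+2)$-dimensional subtorus of $M_{\bC^\ast}\times M_{\bC^\ast}$, so invariance does not show that its components are orbit closures. Without genericity they are not. For example, let $m_0$ span a ray $\rho$ of $\Sigma$, with $d\geq 1$. The pointwise limits $\lim_{u\to 0}(x,\lambda_{m_0}(u)x)$, $x\in M_{\bC^\ast}$, fill out the graph of the projection $M_{\bC^\ast}\to O(\rho)$. Its closure is $(d+1)$-dimensional, hence a component of $\delta_0$, and it is not an orbit closure.

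\emph{The logic must run the other way.}
First prove, by a valuative argument, that $\delta_0$ meets $O(\gamma)$ only if $\rint\gamma$ meets $(0,m_0)+\mathrm{diag}(M_\bR)$. The argument: every point of $\delta_0$ is the limit of $(1,\lambda_{m_0}(u))\cdot z(u)$ for an arc $z(u)$ in the diagonal torus over a Puiseux field. That limit lies in $O(\gamma)$ exactly when the valuation $(x,x+m_0)$ lies in $\rint\gamma$. This produces the pairs for $-m_0$, which is harmless.
Genericity then forces $\dim\gamma\geq d+1$, so $\delta_0$ lies in a union of orbits of dimension at most $d+1$.
Since $\delta_0$ is pure of dimension $d+1$, its components are the $V(\gamma)$ with $\dim\gamma=d+1$.

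\emph{The multiplicity also needs an argument.} Your justification ("number of components of the intersection") is only heuristic. One way to make it precise, with $\gamma=\sigma\times\sigma'$:
pick $x\in M_\bQ$ with $w=(x,x+m_0)\in\rint\gamma$. After the base change $u\mapsto u^k$, which replaces $m_0,w$ by $km_0,kw$ and does not change the special fiber, you may assume $x\in M$.
Since $\lambda_{(x,x)}(u)\in T_{\mathrm{diag}}$, and $\delta\cap U_\gamma=T_{\mathrm{diag}}$ (because $\sigma\cap\sigma'=\{0\}$), the family in $U_\gamma$ is $\lambda_w(u)\cdot T_{\mathrm{diag}}$.
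The retraction $U_\gamma\to O(\gamma)$ makes the total space finite over $O(\gamma)\times\bC$. Its degree is $|T_{\mathrm{diag}}\cap T_\gamma|=[M:\bZ(M\cap\sigma)+\bZ(M\cap\sigma')]$.
Taking the norm of $u$ shows this degree is the multiplicity of $V(\gamma)$ in $\delta_0$.

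None of this uses simpliciality, so the detour through a simplicial refinement is unnecessary.
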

We refer the reader to \cite[Example 4.3]{MR1415592} for an example illustrating \pref{th:MW}.

\section{Proof of \pref{th:main1}}\label{sc:proof1}

We will prove \pref{th:main1} in \pref{sc:main1-1}, \pref{sc:main1-2}, and \pref{sc:main1-3}.
In \pref{sc:example}, we provide a couple of examples illustrating \pref{th:main1}.

\subsection{Proof of \pref{th:main1}(1)}\label{sc:main1-1}

Let $\scS=\lc \sigma_1 \prec \cdots \prec \sigma_{q+1}\rc \in \scrS_q^w$ be a sequence of cones.
For elements $e_1, \cdots, e_{q+1}$ considered in \eqref{eq:fd-q}, we choose elements $e_{q+2}, \cdots, e_{d+1}$ of $M$ so that $e_1, \cdots, e_{d+1}$ form a basis of $M$ and $\bigwedge_{i=1}^{d+1} e_i^\ast= \vol(N)$.
Then we have $f(\scS) =\bigwedge_{i=q+2}^{d+1} e_i^\ast$.
The image of the relative interior of the singular $q$-simplex $\Delta_\scS$ is contained in the relative interior of the facet of $\nabla_w$ that is dual to the cone $\sigma_1 \in \Sigma_w(1)$.
We write the facet as $\nabla_{\sigma_1} \prec \nabla_w$.
The tangent space of $\nabla_{\sigma_1}$ is $e_1^\perp$, and we have $\scF_{d-p}^\bZ (\nabla_{\sigma_1})=\lb \bigwedge^{d-p} e_1^\perp\rb \cap \lb \bigwedge^{d-p} N \rb$.
Therefore, we can see that $f(\scS) =\bigwedge_{i=q+2}^{d+1} e_i^\ast \in \scF_{d-p}^\bZ (\nabla_{\sigma_1})$, and \eqref{eq:trop-chain} is a tropical $(d-q, q)$-chain in the tropical hypersurface $X(\trop (f))$.

We prove that \eqref{eq:trop-chain} is a cycle, i.e., $\partial c(a)=0$.
We have
\begin{align}\label{eq:c-boundary}
\partial c(a)
=\sum_{\scS \in \scrS_q^w} a(\sigma[\scS]) \cdot f (\scS) \cdot \sum_{i=1}^{q}(-1)^{i-1} \partial_i \Delta_\scS
+
(-1)^{q}
\sum_{\scS \in \scrS_q^w} a(\sigma[\scS]) \cdot f (\scS) \cdot \partial_{q+1} \Delta_\scS,
\end{align}
where $\partial_i \Delta_\scS$ denotes the facet of $\Delta_\scS$, which does not have the $i$-th vertex of $\Delta_\scS$ ($1 \leq i \leq q+1$).
For $\scS=\lc \sigma_1 \prec \cdots \prec \sigma_{q+1} \rc \in \scrS_q^w$ and $i \in \lc 1, \cdots, q \rc$, we set
\begin{align}
    \sigma_i'&:=\cone (\lc e_1, \cdots, e_{i-1}, e_{i+1} \rc) \in \Sigma_w(i) \\ \label{eq:S[i]}
    \scS[i]&:=\lc \sigma_1 \prec \cdots \prec \sigma_{i-1} \prec \sigma_i' \prec \sigma_{i+1} \prec \cdots \prec \sigma_{q+1} \rc \in \scrS_q^w.
\end{align}
Then we have 
$\sigma[\scS]=\sigma[\scS[i]]=\sigma_{q+1}$, 
$f(\scS)=-f(\scS[i])$, and
$\partial_i \Delta_{\scS[i]}=\partial_i \Delta_\scS$.
From these, we can see that in the former part of \eqref{eq:c-boundary}, 
the terms for $(\scS, i)$ and $(\scS[i], i)$ cancel each other out.
Thus the former part of \eqref{eq:c-boundary} is $0$.

We prove that the latter part of \eqref{eq:c-boundary} is also $0$.
For any sequence of cones $\scS'=\lc \sigma_1 \prec \cdots \prec \sigma_{q}\rc \in \scrS_{q-1}^w$, the coefficient of $\Delta_{\scS'}$ in the latter part of \eqref{eq:c-boundary} is
\begin{align}\label{eq:latter}
(-1)^{q}
\sum_{\sigma \in \Sigma_w(q+1), \sigma \succ \sigma_q} a(\sigma) \cdot f (\lc\sigma_1 \prec \cdots \prec \sigma_q \prec \sigma\rc).
\end{align}
Let $e_1, \cdots, e_{q+1}$ be the elements considered in \eqref{eq:fd-q} for $\lc\sigma_1 \prec \cdots \prec \sigma_q \prec \sigma\rc \in \scrS_q^w$.
Then $e_{q+1} \in \sigma \cap M$ represents the primitive generator of $\left. (\sigma+\bR \cdot \sigma_q)\middle/\bR \cdot \sigma_q \right.$, and \eqref{eq:latter} equals
\begin{align}\label{eq:MW-boundary}
(-1)^{q}
\la
\bigwedge_{i=1}^q e_i
\wedge
\sum_{\sigma \in \Sigma_w(q+1), \sigma \succ \sigma_q} a(\sigma) m_{\sigma, \sigma_q}
\wedge \bullet, \vol(N)
\ra
=0,
\end{align}
by the definition of Minkowski weights \eqref{eq:minkowski}.
Therefore, \eqref{eq:c-boundary} equals $0$, and $c(a)$ is a tropical $(d-q, q)$-cycle.
Thus we conclude \pref{th:main1}(1).

\subsection{Proof of \pref{th:main1}(2-a)}\label{sc:main1-2}
 
Suppose $w_1=w_2=:w \in \rint(\Delta)$.
We consider the triangulation $\scrT_{\partial \nabla_w}$ of $\partial \nabla_w$ defined by 
\begin{align}\label{eq:bary-div}
\scrT_{\partial \nabla_w}
:=
\lc \conv \lb \lc b_{\sigma_1}, \cdots, b_{\sigma_{q+1}} \rc \rb 
\relmid 
\lc 0 \rc \neq \sigma_1 \prec \cdots \prec \sigma_{q+1} \in \Sigma_w, q \geq 0 \rc.
\end{align}
(We do not impose $\sigma_i \in \Sigma_w(i)$ in \eqref{eq:bary-div}.)
The points $b_{\sigma_i} \in \partial \nabla_w$ are the ones that we chose in \pref{sc:main1}.
For each cone $\sigma \in \Sigma_w$, we also choose a point $\check{b}_\sigma \in \rint \sigma$.
Let $S \subset M_\bR$ be the unit sphere in the Euclidean space $M_\bR \cong \bR^{d+1}$ whose center point is $0 \in M_\bR$, and consider its triangulation $\scrT_S^w$ defined by
\begin{align}\label{eq:bary-div2}
\scrT_{S}^w
:=
\lc S \cap \cone \lb \lc \check{b}_{\sigma_1}, \cdots, \check{b}_{\sigma_{q+1}} \rc \rb \relmid \lc 0 \rc \neq \sigma_1 \prec \cdots \prec \sigma_{q+1} \in \Sigma_w, q \geq 0 \rc.
\end{align}
The fan $\Sigma_w$ and the polytope $\nabla_w$ are dual to each other, and the triangulations $\scrT_{\partial \nabla_w}$ and $\scrT_{S}^w$ of the sphere $\partial \nabla_w \cong S$ are isomorphic as simplicial complexes by the correspondence 
\begin{align}
\conv \lb \lc b_{\sigma_1}, \cdots, b_{\sigma_{q+1}} \rc \rb
\leftrightarrow 
S \cap \cone \lb \lc \check{b}_{\sigma_1}, \cdots, \check{b}_{\sigma_{q+1}} \rc \rb.
\end{align}
We take a homeomorphism 
\begin{align}\label{eq:phi-w}
    \phi_w \colon \partial \nabla_w \to S
\end{align}
preserving the triangulations $\scrT_{\partial \nabla_w}$ and $\scrT_{S}^w$.
(In particular, we have $\phi_w \lb b_{\sigma} \rb=S \cap \cone \lb \lc \check{b}_\sigma \rc \rb$ for all cones $\sigma \neq \lc 0 \rc$ in $\Sigma_w$.)

The tropical cycles $c(a_1)$ and $c(a_2)$ are supported in
\begin{align}\label{eq:c1-support}
\bigcup_{\substack{\sigma_i \in \Sigma_w(i) \\ \sigma_1 \prec \cdots \prec \sigma_{q+1}}} \conv \lb \lc b_{\sigma_1}, \cdots, b_{\sigma_{q+1}} \rc \rb
\subset \partial \nabla_w \\\label{eq:c2-support}
\bigcup_{\substack{\sigma'_i \in \Sigma_w(i) \\ \sigma'_1 \prec \cdots \prec \sigma'_{d+1-q}}} \conv \lb \lc b_{\sigma'_1}, \cdots, b_{\sigma'_{d+1-q}} \rc \rb
\subset \partial \nabla_w,
\end{align}
and the homeomorphism $\phi_w$ sends \eqref{eq:c1-support} and \eqref{eq:c2-support} to 
\begin{align}
\bigcup_{\substack{\sigma_i \in \Sigma_w(i) \\ \sigma_1 \prec \cdots \prec \sigma_{q+1}}} S \cap \cone \lb \lc \check{b}_{\sigma_1}, \cdots, \check{b}_{\sigma_{q+1}} \rc \rb
&\subset S \\
\bigcup_{\substack{\sigma'_i \in \Sigma_w(i) \\ \sigma'_1 \prec \cdots \prec \sigma'_{d+1-q}}} S \cap \cone \lb \lc \check{b}_{\sigma'_1}, \cdots, \check{b}_{\sigma'_{d+1-q}} \rc \rb
&\subset S
\end{align}
respectively.
Take a small generic element $m_0 \in M_\bR$ as in \pref{th:MW}, and consider
\begin{align}\label{eq:c2-support2}
\phi_w^{-1} \lb \bigcup_{\substack{\sigma'_i \in \Sigma_w(i) \\ \sigma'_1 \prec \cdots \prec \sigma'_{d+1-q}}} S \cap \lc m_0+\cone \lb \lc \check{b}_{\sigma'_1}, \cdots, \check{b}_{\sigma'_{d+1-q}} \rc \rb \rc \rb
\subset \partial \nabla_w.
\end{align}
This is a small deformation of \eqref{eq:c2-support}.
We deform the tropical cycle $c(a_2)$ so that its support is contained in \eqref{eq:c2-support2}.
Let $c(a_2)'$ denote the tropical cycle obtained by this deformation.

Since the point $m_0$ is generic, the intersection of cones $\sigma_{q+1} \in \Sigma_w(q+1)$ and $m_0+\sigma'_{d+1-q}$ $(\sigma'_{d+1-q} \in \Sigma_w(d+1-q))$ is transverse.
Furthermore, if the intersection is non-empty, then it is of dimension $1$.
Since a tangent vector of the intersection must be contained in both $\sigma_{q+1}$ and $\sigma'_{d+1-q}$, there must be a cone $\rho \in \Sigma_w(1)$ such that $\sigma_{q+1} \succ \rho$ and $\sigma'_{d+1-q} \succ \rho$.
From
\begin{align}
\cone \lb \lc \check{b}_{\sigma_1}, \cdots, \check{b}_{\sigma_{q+1}} \rc \rb &\subset \sigma_{q+1}\\
m_0+\cone \lb \lc \check{b}_{\sigma'_1}, \cdots, \check{b}_{\sigma'_{d+1-q}} \rc \rb &\subset m_0+\sigma'_{d+1-q},
\end{align}
we can see that the tropical cycles $c(a_1)$ and $c(a_2)'$ intersect transversely, via the homeomorphism $\phi_w$.
The homeomorphism $\phi_w$ sends \eqref{eq:c1-support} and \eqref{eq:c2-support2} to 
\begin{align}
\bigcup_{\substack{\sigma_i \in \Sigma_w(i) \\ \sigma_1 \prec \cdots \prec \sigma_{q+1}}} S \cap \cone \lb \lc \check{b}_{\sigma_1}, \cdots, \check{b}_{\sigma_{q+1}} \rc \rb
=
S \cap \bigcup_{\sigma \in \Sigma_w(q+1)} \sigma
\end{align}
and
\begin{align}
\bigcup_{\substack{\sigma'_i \in \Sigma_w(i) \\ \sigma'_1 \prec \cdots \prec \sigma'_{d+1-q}}} S \cap \lc m_0+\cone \lb \lc \check{b}_{\sigma'_1}, \cdots, \check{b}_{\sigma'_{d+1-q}} \rc \rb \rc
=
S \cap \bigcup_{\sigma' \in \Sigma_w(d+1-q)} m_0+\sigma'
\end{align}
respectively.
We can see from these that there is a one-to-one correspondence between the set of intersection points of $c(a_1), c(a_2)'$ and the set
\begin{align}\label{eq:intersection-pts}
    \lc (\sigma, \sigma') \in \Sigma_w (q+1) \times \Sigma_w (d+1-q) \relmid 
    \sigma \cap (m_0+\sigma') \neq \emptyset, 
    \exists \rho \in \Sigma_w(1)\ \mathrm{s.t.} \
    \sigma \succ \rho, \sigma' \succ \rho \rc.
\end{align}
We can also see from this correspondence and the the definition of the right-hand side of \eqref{eq:w1=w2-intersection} that it suffices to show that for any $\rho \in \Sigma_w(1)$ and 
$\sigma \in \Sigma_w(q+1), \sigma' \in \Sigma_w(d+1-q)$ such that 
$\sigma \succ \rho, \sigma' \succ \rho$ and $\sigma \cap (m_0+\sigma') \neq \emptyset$, the number $a_1(\sigma) \cdot a_2(\sigma') \cdot
\ld M : \bZ(M \cap \sigma)+\bZ(M \cap \sigma') \rd$
appearing in \eqref{eq:MW-cup} coincides with the tropical multiplicity 
($\Omega_x \lb a_\Delta \wedge b_{\Delta'} \rb$ 
in \eqref{eq:tropical-intersection}) of the tropical intersection $\la c(a_1), c(a_2)' \ra$ at the intersection point corresponding to $(\sigma, \sigma')$.
We will show this in the following.

Let $\rho \in \Sigma_w(1), \sigma \in \Sigma_w(q+1), \sigma' \in \Sigma_w(d+1-q)$ be cones such that $\sigma \succ \rho, \sigma' \succ \rho$ and $\sigma \cap (m_0+\sigma') \neq \emptyset$.
Suppose that the intersection point corresponding to $(\sigma, \sigma')$ is the intersection point of simplices $\Delta_\scS$ and (the deformation of) $\Delta_{\scS'}$ with 
$\scS=\lc \sigma_1 \prec \cdots \prec \sigma_{q+1}=\sigma \rc, 
\scS'=\lc \sigma'_1 \prec \cdots \prec \sigma'_{d+1-q}=\sigma' \rc$.
The intersection point is contained in the relative interior of the facet of $\nabla_w$ dual to $\rho$.
The simplices $\Delta_\scS$ and $\Delta_{\scS'}$ are contained in the facet, and we have $\sigma_1=\sigma'_1=\rho$.

Let $e_i \in M$ $\lb 1 \leq i \leq d+1\rb$ be the elements such that 
$\lc e_i \relmid 1 \leq i \leq j \rc$ are the primitive generators of 
$\sigma_j$ for all $1 \leq j \leq q+1$, 
and $\lc e_1 \rc \cup \lc e_i \relmid q+2 \leq i \leq k \rc$ are the primitive generators of $\sigma'_{k-q}$ for all $q+2 \leq k \leq d+1$.
We set $I:=\la \bigwedge_{i=1}^{d+1} e_i, \vol (N)\ra \in \bZ$.
Then we have
\begin{align}\label{eq:MW-cup0}
a_1(\sigma) \cdot a_2 (\sigma') \cdot \ld M : \bZ(M \cap \sigma)+\bZ(M \cap \sigma') \rd
=
a_1(\sigma) \cdot a_2 (\sigma') \cdot |I|.
\end{align}
On the other hand, the coefficients of $\Delta_\scS$ and $\Delta_{\scS'}$ in the tropical cycles $c(a_1)$ and $c(a_2)$ are
\begin{align}
a_1(\sigma) \cdot \la \bigwedge_{i=1}^{q+1} e_i \wedge \bullet, \vol (N)\ra, \quad 
a_2 (\sigma') \cdot \la e_1 \wedge \bigwedge_{i=q+2}^{d+1} e_i \wedge \bullet, \vol (N)\ra
\end{align}
respectively.
The integral volume form on the facet of $\nabla_w$ dual to $\rho$, which defines the orientation under which the simplices $\Delta_\scS$ and $\Delta_{\scS'}$ intersect positively is
\begin{align}\label{eq:integral-volume}
\frac{(-1)^d}{|I|} \cdot \bigwedge_{i=2}^{d+1} e_i \in \bigwedge^d \lb \left. M \middle/ \bZ e_1 \right. \rb.
\end{align}
From these, we can see that the tropical multiplicity of the tropical intersection $\la c(a_1), c(a_2) \ra$ at the intersection point corresponding to $(\sigma, \sigma')$ is
\begin{align}\label{eq:int-number}
\begin{split}
\frac{(-1)^d}{|I|} \cdot (-1)^{q(d-q)} \cdot 
a_1(\sigma) \cdot \la \bigwedge_{i=1}^{q+1} e_i \wedge \bigwedge_{j=q+2}^{d+1} e_j, \vol (N)\ra \cdot a_2 (\sigma') \cdot \la e_1 \wedge \bigwedge_{i=q+2}^{d+1} e_i \wedge \bigwedge_{j=2}^{q+1} e_j, \vol (N)\ra\\
=
\frac{(-1)^d}{|I|} \cdot (-1)^{q(d-q)} \cdot
a_1(\sigma) \cdot I \cdot
a_2 (\sigma') \cdot (-1)^{q(d-q)} I,
\end{split}
\end{align}
and this coincides with \eqref{eq:MW-cup0} multiplied by $(-1)^d$.
Thus we can conclude \pref{th:main1}(2-a).

\subsection{Proof of \pref{th:main1}(2-b), (2-c)}\label{sc:main1-3}

We prove \pref{th:main1}(2-b).
Suppose that we have $w_1 \neq w_2$ and $\conv (\lc w_1, w_2 \rc) \in \scrT$.
When choosing a point $b_{\sigma'}$ for each cone $\sigma'=\bR_{\geq 0} \cdot (\tau-w_2) \in \Sigma_{w_2}$ $(\tau \in \scrT)$ in the construction of the tropical cycle $c(a_2)$, we choose it so that if $\tau \succ \conv \lb \lc w_1, w_2\rc \rb$, then $b_{\sigma'}=b_\sigma$, where $\sigma:=\bR_{\geq 0} \cdot (\tau-w_1) \in \Sigma_{w_1}$.
We consider the triangulations $\scrT_{\partial \nabla_{w_2}}$, $\scrT_S^{w_2}$ and the homeomorphism $\phi_{w_2} \colon \partial \nabla_{w_2} \to S$ 
(\eqref{eq:bary-div}, \eqref{eq:bary-div2}, \eqref{eq:phi-w} for $w=w_2$).
The tropical cycles $c(a_1)$ and $c(a_2)$ are supported in $\partial \nabla_{w_1}$ and $\partial \nabla_{w_2}$ respectively.
Therefore, they can intersect only in $\partial \nabla_{w_1} \cap \partial \nabla_{w_2}$ which is the cell dual to $\conv (\lc w_1, w_2 \rc) \in \scrT$.
The homeomorphism $\phi_{w_2}$ sends the parts of the tropical cycles $c(a_1)$ and $c(a_2)$ contained in the cell to 
\begin{align}
\bigcup_{\substack{\sigma_i \in \Sigma_{w_2}(i) \\ \rho=\sigma_1 \prec \cdots \prec \sigma_{q+1}}} S \cap \cone \lb \lc \check{b}_{\sigma_1}, \cdots, \check{b}_{\sigma_{q+1}} \rc \rb
&\subset S\\
\bigcup_{\substack{\sigma'_i \in \Sigma_{w_2}(i) \\ \rho=\sigma'_1 \prec \cdots \prec \sigma'_{d+1-q}}} S \cap \cone \lb \lc \check{b}_{\sigma'_1}, \cdots, \check{b}_{\sigma'_{d+1-q}} \rc \rb
&\subset S
\end{align}
respectively, where $\rho:=\bR_{\geq 0} \cdot (w_1-w_2) \in \Sigma_{w_2}(1)$.
As we did in \pref{sc:main1-2}, we take a small generic element $m_0 \in M_\bR$ and consider \eqref{eq:c2-support2} with $w=w_2$.
We deform the tropical cycle $c(a_2)$ so that 
the support of the part contained in $\partial \nabla_{w_1} \cap \partial \nabla_{w_2}$
is contained in \eqref{eq:c2-support2} with $w=w_2$.
Let $c(a_2)'$ denote the tropical cycle obtained by the deformation.
By the same argument as before, we can see that the tropical cycles $c(a_1)$ and $c(a_2)'$ intersect transversely, and that there is a one-to-one correspondence between the set of intersection points of $c(a_1), c(a_2)'$ and the set
\begin{align}\label{eq:intersection-pts-2}
    \lc (\sigma, \sigma') \in \Sigma_{w_2} (q+1) \times \Sigma_{w_2} (d+1-q) \relmid 
    \sigma \cap (m_0+\sigma') \neq \emptyset, 
    \sigma \succ \rho, \sigma' \succ \rho \rc.
\end{align}

Let $(\sigma, \sigma')$ be an arbitrary element in \eqref{eq:intersection-pts-2}.
Suppose that the intersection point corresponding to $(\sigma, \sigma')$ is the intersection point of simplices $\Delta_\scS$ (in $c(a_1)$) and (the deformation of) $\Delta_{\scS'}$ (in $c(a_2)'$) with 
\begin{align}
\scS&=\lc \sigma_1'' \prec \cdots \prec \sigma_{q+1}'' \rc \in \scrS_q^{w_1} \\
\scS'&=\lc \sigma'_1 \prec \cdots \prec \sigma'_{d+1-q} \rc \in \scrS_{d-q}^{w_2}.
\end{align}
We have $\sigma_1''=-\rho$, $\sigma'_1=\rho$, and $\sigma'_{d+1-q}=\sigma'$.
There exists a sequence of simplices $\tau_1 \prec \cdots \prec \tau_{q+1} \in \scrT$ such that $\sigma_i''=\bR_{\geq 0} \cdot (\tau_i-w_1)$.
(We have $\tau_1=\conv (\lc w_1, w_2 \rc)$.)
We set $\sigma_i:=\bR_{\geq0} \cdot (\tau_i-w_2) \in \Sigma_{w_2}(i)$.
Then we have $\sigma_1=\rho$ and $\sigma_{q+1}=\sigma$.

Let $e_i \in M$ $\lb 1 \leq i \leq d+1\rb$ be the elements such that 
$\lc e_i \relmid 1 \leq i \leq j \rc$ are the primitive generators of 
$\sigma_j$ for all $1 \leq j \leq q+1$, 
and $\lc e_1 \rc \cup \lc e_i \relmid q+2 \leq i \leq k \rc$ are the primitive generators of $\sigma'_{k-q}$ for all $q+2 \leq k \leq d+1$.
Then the coefficients of $\Delta_\scS$ and $\Delta_{\scS'}$ in the tropical cycles $c(a_1)$ and $c(a_2)$ are
\begin{align}
a_1(\sigma_{q+1}'') \cdot \la -e_1 \wedge \bigwedge_{i=2}^{q+1} (e_i-e_1) \wedge \bullet, \vol (N)\ra, \quad 
a_2 (\sigma') \cdot \la e_1 \wedge \bigwedge_{i=q+2}^{d+1} e_i \wedge \bullet, \vol (N)\ra
\end{align}
respectively.
The integral volume form on $\partial \nabla_{w_1} \cap \partial \nabla_{w_2}$ (the cell dual to $\conv (\lc w_1, w_2 \rc) \in \scrT$), which defines the orientation under which the simplices $\Delta_\scS$ and $\Delta_{\scS'}$ intersect positively is the same as \eqref{eq:integral-volume} with $I:=\la \bigwedge_{i=1}^{d+1} e_i, \vol (N)\ra$.
By the same computation as \eqref{eq:int-number}, we can see that the tropical  multiplicity of the tropical intersection $\la c(a_1), c(a_2)' \ra$ at the intersection point corresponding to $(\sigma, \sigma')$ equals 
\begin{align}
(-1)^{d+1} \cdot a_1(\sigma_{q+1}'') \cdot a_2 (\sigma') \cdot |I|
=
(-1)^{d+1} \cdot a_1(\sigma_{q+1}'') \cdot a_2 (\sigma') \cdot \ld M : \bZ(M \cap \sigma)+\bZ(M \cap \sigma') \rd.
\end{align}

We have a bijection from \eqref{eq:intersection-pts-2} to 
\begin{align}\label{eq:intersection-pts-3}
    \lc (\sigma, \sigma') \in \Sigma_{w_1, w_2} (q) \times \Sigma_{w_1, w_2} (d-q) \relmid 
    \sigma \cap (m_0'+\sigma') \neq \emptyset \rc
\end{align}
given by 
$(\sigma, \sigma') \mapsto (\left. (\sigma+\bR \cdot \rho)\middle/ \bR \cdot \rho \right., \left. (\sigma'+\bR \cdot \rho)\middle/ \bR \cdot \rho \right.)
=:(\overline{\sigma}, \overline{\sigma}')$,
where $m_0' \in \left. M_\bR \middle/ \bR \cdot \rho \right.$ is the projection of the generic element $m_0 \in M_\bR$ we took.
Therefore, we obtain
\begin{align}
    \la c(a_1), c(a_2)\ra
    &=
    \sum_{(\sigma, \sigma')} (-1)^{d+1} \cdot a_1(\sigma_{q+1}'') \cdot a_2 (\sigma') \cdot \ld M : \bZ(M \cap \sigma)+\bZ(M \cap \sigma') \rd\\
    &=
     \sum_{(\overline{\sigma}, \overline{\sigma}')} (-1)^{d+1} \cdot \overline{a}_1(\overline{\sigma}) \cdot \overline{a}_2 (\overline{\sigma}') \cdot \ld M' : \bZ(M' \cap \overline{\sigma})+\bZ(M' \cap \overline{\sigma}') \rd\\
    &=(-1)^{d+1} \cdot \overline{a}_1 \cup \overline{a}_2,
\end{align}
where $(\sigma, \sigma')$ and $(\overline{\sigma}, \overline{\sigma}')$ run over
\eqref{eq:intersection-pts-2} and \eqref{eq:intersection-pts-3} respectively, and $M':=\left. M\middle/ \bZ \cdot (w_1-w_2)\right.$.
We conclude \pref{th:main1}(2-b).

The last claim \pref{th:main1}(2-c) is obvious.
When $\conv(\lc w_1, w_2 \rc) \nin \scrT$, we have $\nabla_{w_1} \cap \nabla_{w_2}=\emptyset$.
Since the tropical cycles $c(a_1)$ and $c(a_2)$ are supported in $\partial \nabla_{w_1}$ and $\partial \nabla_{w_2}$ respectively, they never intersect.
Thus we have $\la c(a_1), c(a_2)\ra=0$.

\subsection{Examples}\label{sc:example}

\begin{example}\label{eg:k3}
Let $d=2, q=1$.
Take a basis $\lc e_1, e_2, e_3 \rc$ of $M \cong \bZ^3$, and let $\lc e_1^\ast, e_2^\ast, e_3^\ast \rc$ be the dual basis of $N:=\Hom(M, \bZ)$.
We set $e_0:=-(e_1+e_2+e_3) \in M$.
We consider the polynomial
\begin{align}\label{eq:f1}
    f:=\sum_{m \in \Delta \cap M} x^{\lambda_m} z^m \in K \ld M \rd
\end{align}
with
\begin{align}
    \Delta:=\conv \lb \lc e_0, e_1, e_2, e_3\rc \rb \subset M_\bR, \quad
\lambda_m:=
\left\{ \begin{array}{ll}
1 & m=e_0 \\
0 & m \in \lb \Delta \cap M \rb \setminus \lc e_0 \rc.
\end{array} 
\right. 
\end{align}
The tropical hypersurface $X(\trop (f)) \subset N_\bR \cong \bR^3$ obtained by tropicalization is shown by black lines in \pref{fg:surface}.
\begin{figure}[htbp]
	\centering
	\includegraphics[scale=0.8]{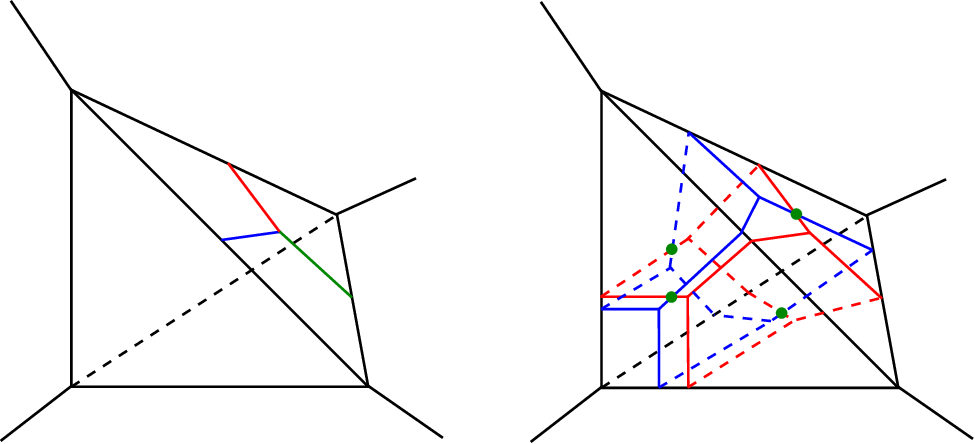}
    \caption{The tropical cycle $c(a_0)$ in the tropical hypersurface $X(\trop(f))$ and its self-intersection}
\label{fg:surface}
\end{figure}

We have $W=\lc 0\rc$, and the fan $\Sigma_{w=0}$ 
is the fan of $\bP^3$.
The group $\MW^1(\Sigma_0)$ of Minkowski weights is isomorphic to $\bZ$ and is generated by the Minkowski weight $a_0 \colon \Sigma_0(2) \to \bZ$ sending all cones in $\Sigma_0(2)$ to $1$.
We consider the sequences of cones 
\begin{align}\label{eq:S_i}
    \scS_i:= \lc \cone(\lc e_0 \rc) \prec \cone(\lc e_0, e_i \rc) \rc \in \scrS_{q=1}^{w=0} \quad (i=1, 2, 3).
\end{align}
The $1$-simplices $\Delta_{\scS_i}$ $(i=1, 2, 3)$ are the red, blue, and green line segments shown on the left of \pref{fg:surface} respectively.
If we set $\vol(N):=e_1^\ast \wedge e_2^\ast \wedge e_3^\ast \in \bigwedge^3 N$, then we have
\begin{align}
    f(\scS_i)
    =\la e_0 \wedge e_i \wedge \bullet, \vol(N) \ra
    =
\left\{ \begin{array}{ll}
-e_2^\ast+e_3^\ast & i=1\\
-e_3^\ast+e_1^\ast & i=2\\
-e_1^\ast+e_2^\ast & i=3,
\end{array} 
\right.
\end{align}
and $\sum_{i=1}^3 a_0 (\sigma[\scS_i]) \cdot f(\scS_i)=\sum_{i=1}^3 f(\scS_i)=0$ for the generator $a_0$ of $\MW^1(\Sigma_0)$.
This corresponds to \eqref{eq:MW-boundary}.
The support of the tropical $(1, 1)$-cycle $c(a_0)$ associated with $a_0 \in  \MW^1(\Sigma_0)$ is shown by red lines on the right of \pref{fg:surface}.

We compute the cup product $a_0 \cup a_0 \in \MW^2(\Sigma_0)$ and the tropical intersection number $\la c(a_0), c(a_0)\ra$.
For instance, let $\varepsilon>0$ be a small real constant, and take $m_0:=\varepsilon \cdot (3e_1+2e_2+e_3) \in M_\bR$ as a generic vector in \pref{th:MW}.
Then \eqref{eq:cone-pair} is
\begin{align}\label{eq:four-1}
    \lc \lb \cone \lb \lc e_0, e_1 \rc \rb, \cone \lb \lc e_0, e_3 \rc \rb \rb \rc
    \quad & \mathrm{for}\ \rho=\cone \lb \lc e_0 \rc \rb\\
    \lc \lb \cone \lb \lc e_1, e_2 \rc \rb, \cone \lb \lc e_0, e_1 \rc \rb \rb \rc
    \quad & \mathrm{for}\ \rho=\cone \lb \lc e_1 \rc \rb\\
    \lc \lb \cone \lb \lc e_1, e_2 \rc \rb, \cone \lb \lc e_0, e_2 \rc \rb \rb \rc
    \quad & \mathrm{for}\ \rho=\cone \lb \lc e_2 \rc \rb\\ \label{eq:four-4}
    \lc \lb \cone \lb \lc e_1, e_3 \rc \rb, \cone \lb \lc e_0, e_3 \rc \rb \rb \rc
    \quad & \mathrm{for}\ \rho=\cone \lb \lc e_3 \rc \rb,
\end{align}
and the indices $\ld M : \bZ(M \cap \sigma)+\bZ(M \cap \sigma') \rd$ appearing in \eqref{eq:MW-cup} for these pairs of cones are all $1$.
We can see that $a_0 \cup a_0 \colon \Sigma_0(1) \to \bZ$ is the map sending all the cones in $\Sigma_0(1)$ to $1$.
We obtain $\psi(a_0 \cup a_0)=4$.

On the right of \pref{fg:surface}, the blue lines show the support of the deformation $c(a_0)'$ of $c(a_0)$ that we considered in \pref{sc:main1-2}.
The tropical cycles $c(a_0)$ and $c(a_0)'$ intersect at four green points.
Each intersection point corresponds to one of \eqref{eq:four-1}-\eqref{eq:four-4}.
For instance, \eqref{eq:four-1} corresponds to the intersection point of $\Delta_{\scS_1}$ and (the deformation of) $\Delta_{\scS_3}$ shown on the left of \pref{fg:surface}.
The integral volume form on the facet of $\nabla_{w=0}$ dual to $\cone \lb \lc e_0 \rc \rb$, which defines the orientation under which the simplices 
$\Delta_{\scS_1}$ and $\Delta_{\scS_3}$ intersect positively is 
$e_1 \wedge e_3 \in \bigwedge^2 (\left. M \middle/ \bZ e_0\right.)$.
Therefore, the tropical multiplicity of the intersection point is 
\begin{align}
\la e_1 \wedge e_3, 
a_0\lb \sigma [\scS_1]\rb \cdot f(\scS_1) \wedge a_0\lb \sigma [\scS_3]\rb \cdot  f(\scS_3) \ra
=\la e_1 \wedge e_3, (-e_2^\ast+e_3^\ast) \wedge (-e_1^\ast+e_2^\ast) \ra
=1.
\end{align}
One can similarly check that the tropical multiplicities of the other intersection points are also $1$.
Thus we have $\la c(a_0), c(a_0)\ra=4=\psi(a_0 \cup a_0)$.
\end{example}

\begin{example}
Let $d=2, q=1$, and take a basis $\lc e_1, e_2, e_3 \rc$ of $M \cong \bZ^3$ again.
We consider the polynomial \eqref{eq:f1} with 
\begin{align}
\Delta:=\conv \lb \lc 2e_1, -e_1, e_2, -e_2, e_3, -e_3 \rc \rb \subset M_\bR, \quad
\lambda_m:=
\left\{ \begin{array}{ll}
3 & m=2e_1 \\
0 & m=0 \\
1 & m \in \lb \Delta \cap M \rb \setminus \lc 0, 2e_1 \rc.
\end{array} 
\right. 
\end{align}
The tropical hypersurface $X(\trop (f)) \subset N_\bR \cong \bR^3$ obtained by tropicalization is shown by black lines in \pref{fg:surface2}.
\begin{figure}[htbp]
	\centering
	\includegraphics[scale=0.475]{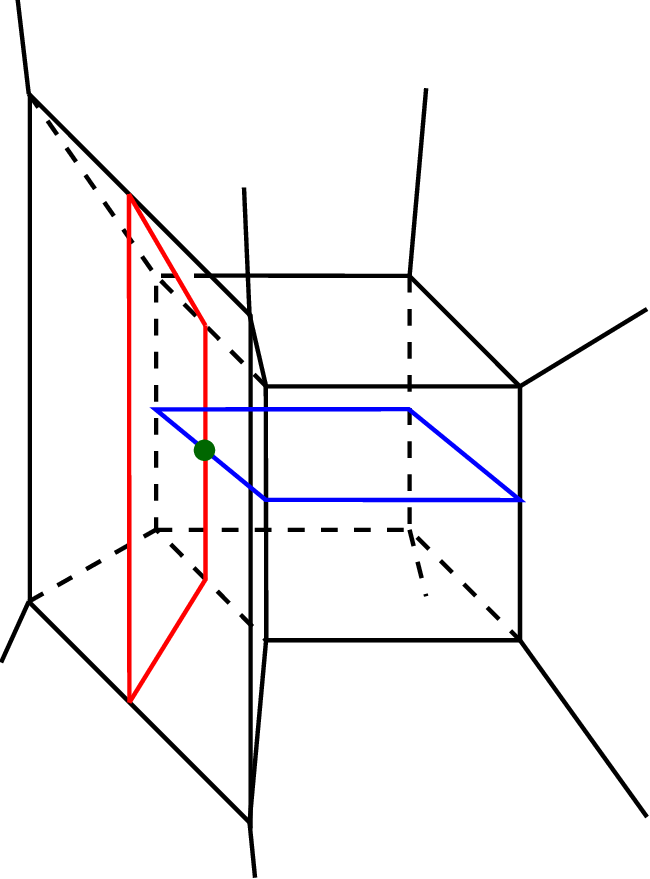}
    \caption{The tropical cycles $c(a_1), c(a_2)$}
\label{fg:surface2}
\end{figure}
We have $W=\lc e_1, 0 \rc$.
Let $a_1 \in \MW^1(\Sigma_{e_1})$ and $a_2 \in \MW^1(\Sigma_{0})$ be the Minkowski weights defined by
\begin{align}
a_1 &\colon \Sigma_{e_1}(2) \to \bZ, \quad \sigma \mapsto 
\left\{ \begin{array}{ll}
1 & \sigma \subset \bR e_1 \oplus \bR e_3 \\
0 & \mathrm{otherwise}\\
\end{array} 
\right.\\
a_2 &\colon \Sigma_0(2) \to \bZ, \quad \sigma \mapsto 
\left\{ \begin{array}{ll}
1 & \sigma \subset \bR e_1 \oplus \bR e_2 \\
0 & \mathrm{otherwise}.
\end{array} 
\right.
\end{align}
The red and blue loops in \pref{fg:surface2} are the tropical cycles $c(a_1), c(a_2)$ associated with these Minkowski weights (the unions of simplices in $c(a_1), c(a_2)$ whose coefficients are non-zero).

We compute the tropical intersection number $\la c(a_1), c(a_2)\ra$.
Taking $m_0:=\varepsilon \cdot (3e_1+2e_2+e_3) \in M_\bR$ as a generic vector, we deform the tropical cycle $c(a_2)$ as we did in \pref{sc:main1-2} and \pref{sc:main1-3}.
Then \eqref{eq:intersection-pts-2} consists of two pairs of cones in $\Sigma_0(2)$
\begin{align}\label{eq:pair1}
    &\lb \cone \lb \lc e_1, e_2 \rc \rb, \cone \lb \lc e_1, -e_3 \rc\rb \rb\\ \label{eq:pair2}
    &\lb \cone \lb \lc e_1, e_3 \rc \rb, \cone \lb \lc e_1, -e_2 \rc\rb \rb.
\end{align}
The former pair \eqref{eq:pair1} corresponds to the intersection point of $\Delta_{\scS_1}$ and (the deformation of) $\Delta_{\scS_2}$ with
\begin{align}
    \scS_1&:=\lc \cone \lb \lc -e_1 \rc \rb \prec \cone \lb \lc -e_1, -e_1+e_2 \rc \rb \rc  \in \scrS_{q=1}^{w=e_1}\\
    \scS_2&:=\lc \cone \lb \lc e_1 \rc \rb \prec \cone \lb \lc e_1, -e_3 \rc \rb \rc  \in \scrS_{q=1}^{w=0}.
\end{align}
We have $a_1\lb \sigma [\scS_1]\rb=a_2\lb \sigma [\scS_2]\rb=0$, and the tropical multiplicity of the intersection point is zero.
The latter pair \eqref{eq:pair2} corresponds to the intersection point of $\Delta_{\scS_1'}$ and (the deformation of) $\Delta_{\scS_2'}$ with
\begin{align}
    \scS_1'&:=\lc \cone \lb \lc -e_1 \rc \rb \prec \cone \lb \lc -e_1, -e_1+e_3 \rc \rb \rc  \in \scrS_{q=1}^{w=e_1}\\
    \scS_2'&:=\lc \cone \lb \lc e_1 \rc \rb \prec \cone \lb \lc e_1, -e_2 \rc \rb \rc  \in \scrS_{q=1}^{w=0}.
\end{align}
We have $a_1\lb \sigma [\scS_1']\rb=a_2\lb \sigma [\scS_2']\rb=1$, and 
\begin{align}
        f(\scS_1')
    &=\la -e_1 \wedge (-e_1+e_3) \wedge \bullet, \vol (N) \ra
    =e_2^\ast\\
    f(\scS_2')
    &=\la e_1 \wedge (-e_2) \wedge \bullet, \vol (N) \ra
    =-e_3^\ast
\end{align}
when setting $\vol(N):=e_1^\ast \wedge e_2^\ast \wedge e_3^\ast \in \bigwedge^3 N$.
The integral volume form on $\partial \nabla_{w=e_1} \cap \partial \nabla_{w=0}$, which defines the orientation under which the simplices 
$\Delta_{\scS_1'}$ and $\Delta_{\scS_2'}$ intersect positively is 
$e_2 \wedge e_3 \in \bigwedge^2 (\left. M \middle/ \bZ e_1\right.)$.
Therefore, the tropical multiplicity of the intersection point is 
\begin{align}
\la e_2 \wedge e_3, 
a_1\lb \sigma [\scS_1']\rb \cdot f(\scS_1') \wedge a_2\lb \sigma [\scS_2']\rb \cdot  f(\scS_2') \ra=-1.
\end{align}
Thus we have $\la c(a_1), c(a_2)\ra=-1$.

The cup product $\overline{a}_1 \cup \overline{a}_2$ can also be computed as follows:
The fan $\Sigma_{e_1, 0} \subset \left. M_\bR \middle/\bR e_1 \right.$ 
(the fan $\Sigma_{w_1, w_2}$ in \pref{th:main1}(2-b) with $w_1=e_1, w_2=0$)
is the fan of $\bP^1 \times \bP^1$, and \eqref{eq:intersection-pts-3} consists of two pairs of cones in $\Sigma_{e_1, 0}(1)$
\begin{align}\label{eq:pair3}
    &\lb \cone \lb \lc e_2 \rc \rb, \cone \lb \lc -e_3 \rc\rb \rb\\ \label{eq:pair4}
    &\lb \cone \lb \lc e_3 \rc \rb, \cone \lb \lc -e_2 \rc\rb \rb,
\end{align}
which respectively correspond to \eqref{eq:pair1} and \eqref{eq:pair2} via the projection $M_\bR \to \left. M_\bR  \middle/ \bR e_1 \right.$.
The Minkowski weights $\overline{a}_1, \overline{a}_2 \in \MW^1(\Sigma_{e_1, 0})$ are given by
\begin{align}
\overline{a}_1 &\colon \Sigma_{e_1, 0}(1) \to \bZ, \quad \sigma \mapsto 
\left\{ \begin{array}{ll}
1 & \sigma \subset \left. (\bR e_1 \oplus \bR e_3) \middle/ \bR e_1 \right. \\
0 & \mathrm{otherwise}
\end{array} 
\right.\\
\overline{a}_2 &\colon \Sigma_{e_1, 0}(1) \to \bZ, \quad \sigma \mapsto 
\left\{ \begin{array}{ll}
1 & \sigma \subset \left. (\bR e_1 \oplus \bR e_2)\middle/ \bR e_1 \right. \\
0 & \mathrm{otherwise}.
\end{array} 
\right.
\end{align}
We can see that $\overline{a}_1(\sigma) \cdot \overline{a}_2(\sigma') \cdot
\ld M : \bZ(M \cap \sigma)+\bZ(M \cap \sigma') \rd$ is $0$ for \eqref{eq:pair3} and is $1$ for \eqref{eq:pair4}.
Thus we have $\overline{a}_1 \cup \overline{a}_2=1=-\la c(a_1), c(a_2)\ra$.
\end{example}

\section{Proof of \pref{th:main2}}\label{sc:proof2}

We will construct the $d$-cycle $C(\scE)$ of \pref{th:main2}(2) in \pref{sc:step1}--\pref{sc:step1-3}.
The small perturbation of \pref{th:main2}(2-c) will be constructed in \pref{sc:step2}.
In \pref{sc:homology}, we will discuss the homology class of the lift $C(\scE)_t \subset \mathring{Z}_t$.
The proof of \pref{th:main2} is completed in \pref{sc:proof-main2}.

\subsection{Constructing cycles from lattice polytopes}\label{sc:step1}

Let $P \subset N_\bR$ be a lattice polytope such that the fan $\Sigma_w$ is a refinement of the normal fan $\Sigma_P$ of $P$, and $\varphi_P \colon M_\bR \to \bR$ be the support function for $P$, which is defined by
\begin{align}\label{eq:varphi}
    \varphi_P(m):=\inf_{n \in  P} \la m, n \ra.
\end{align}
Let further
\begin{align}\label{eq:mu}
\mu \colon \Sigma_w \to \Sigma_P
\end{align}
be the map that sends each cone 
$\sigma \in \Sigma_w$ to the minimal cone in $\Sigma_P$ containing $\sigma$.
For a sequence of cones $\scS=\lc \sigma_1 \prec \cdots \prec \sigma_{q+1}\rc \in \scrS_q^w$, let $P_{\scS}$ denote the face of $P$ corresponding to the cone $\mu \lb \sigma[\scS] \rb=\mu(\sigma_{q+1})$, i.e.,
\begin{align}\label{eq:face-S}
   P_{\scS}:=\lc n \in P \relmid \varphi_P(m)=\la m, n \ra, \forall m \in \mu (\sigma[\scS]) \rc.
\end{align}

Suppose that $\dim \mu(\sigma[\scS])=\dim \sigma[\scS]=q+1$.
This happens if and only if $\dim P_{\scS}=d-q$.
Let $e_i \in M$ $\lb 1 \leq i \leq q+1\rb$ be the elements such that 
$\lc e_i \relmid 1 \leq i \leq j \rc$ are the primitive generators of 
$\sigma_j$ for all $1 \leq j \leq q+1$.
The map \eqref{eq:fd-q} defines an element of 
$\bigwedge^{d-q} (\left. M_\bR \middle/ (\bR \cdot \sigma[\scS] \right.))^\ast
=\bigwedge^{d-q} (\left. M_\bR \middle/ (\bR \cdot \mu (\sigma[\scS] )\right.))^\ast$, which determines an orientation of 
$(\bR \cdot \mu(\sigma[\scS]))^\perp = T(P_{\scS})$, where $T(P_{\scS}) \subset N_\bR$ denotes the subspace generated by vectors tangent to $P_{\scS}$.
We give the orientation induced by this to the face $P_{\scS}$.
We also take a triangulation of $P$, and consider the induced triangulation for every face $P_{\scS} \prec P$.
We regard each triangle as a singular simplex, and define the singular $d$-chain $c(P)$ in $N_\bR \times N_\bR$ by
\begin{align}\label{eq:c(P)}
    c(P):=\sum_{q=0}^d \sum_{\substack{\scS \in \scrS_q^w \\ \dim P_\scS=d-q}} (-1)^q \cdot \Delta_\scS \times P_{\scS},
\end{align}
where we regard \eqref{eq:c(P)} as a singular $d$-chain by taking a triangulation of the product of $\Delta_\scS$ and every simplex in the triangulation of $P_{\scS}$, which we took above in advance.
We refer the reader, for instance, to \cite[Section B.6]{MR4406774} for how to take a triangulation of the product of simplices.

\begin{lemma}\label{lm:b-delta}
Suppose that $\dim P_{\scS}=d-q$ for $\scS=\lc \sigma_1 \prec \cdots \prec \sigma_{q+1}\rc \in \scrS_q^w$.
Then one has
    \begin{align}\label{eq:b-delta}
        \partial P_{\scS}
        =-
        \sum_{\substack{\sigma \in \Sigma_w(q+2)\\ \sigma \succ \sigma[\scS] \\ \dim P_{\scS(\sigma)=d-q-1}}} P_{\scS(\sigma)},
    \end{align}
    where $\scS(\sigma)$ is defined by $\scS(\sigma):=\lc \sigma_1 \prec \cdots \prec \sigma_{q+1} \prec \sigma \rc \in \scrS_{q+1}^w$ for $\scS=\lc \sigma_1 \prec \cdots \prec \sigma_{q+1}\rc \in \scrS_q^w$ and $\sigma \in \Sigma_w(q+2)$ such that $\sigma \succ \sigma_{q+1}$.
\end{lemma}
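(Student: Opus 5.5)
The plan is to identify the facets of $P_{\scS}$, which is a $(d-q)$-dimensional face of the lattice polytope $P$, with cones of $\Sigma_w(q+2)$ containing $\sigma[\scS]$. Then I will compare the boundary orientations with the orientations defined through \eqref{eq:fd-q}.

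\emph{Step 1: the facets.} Every facet $F$ of $P_{\scS}$ is a face of $P$ of dimension $d-q-1$. It therefore corresponds to a cone $\mu_F \in \Sigma_P(q+2)$ with $\mu_F \succ \mu(\sigma[\scS])$. Since $\Sigma_w$ refines $\Sigma_P$ and $\dim \mu(\sigma[\scS]) = \dim \sigma[\scS]$, the cone $\sigma[\scS]$ lies on the boundary of $\mu_F$ and spans the same linear space as $\mu(\sigma[\scS])$. Consider the set of cones $\sigma \in \Sigma_w(q+2)$ with $\sigma \succ \sigma[\scS]$ and $\sigma \subset \mu_F$. I claim this set consists of exactly one cone. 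Locally near a relative interior point of $\sigma[\scS]$, the cones of $\Sigma_w$ containing $\sigma[\scS]$ form a complete fan in the quotient $M_\bR/\bR\sigma[\scS]$. The image of $\mu_F$ there is a ray, and exactly one ray of the quotient fan lies in it.

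Conversely, let $\sigma \succ \sigma[\scS]$ with $\sigma\in\Sigma_w(q+2)$ and $\dim P_{\scS(\sigma)} = d-q-1$. Then $\mu(\sigma)$ has dimension $q+2$ and contains $\mu(\sigma[\scS])$, so $P_{\scS(\sigma)}$ is a facet of $P_{\scS}$. The remaining cones $\sigma$ have $\dim\mu(\sigma) = q+1$, so $P_{\scS(\sigma)} = P_{\scS}$ and they are excluded from the sum. This gives a bijection between the facets of $P_{\scS}$ and the index set in \eqref{eq:b-delta}. Since the triangulations were chosen compatibly on faces, the boundary of the triangulated chain $P_{\scS}$ is the sum of its facets with their induced boundary orientations, and interior simplices cancel.

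\emph{Step 2: signs.} Fix a facet $F = P_{\scS(\sigma)}$ and let $e_1,\dots,e_{q+2}$ be the generators adapted to $\scS(\sigma)$. Complete them to a basis $e_1,\dots,e_{d+1}$ of $M$ with $\bigwedge_{i=1}^{d+1} e_i^\ast=\vol(N)$. As in \pref{sc:main1-1}, the orientation of $P_{\scS}$ is $\bigwedge_{i=q+2}^{d+1} e_i^\ast$ and the orientation of $F$ is $\bigwedge_{i=q+3}^{d+1} e_i^\ast$.

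We need the outward normal of $F$ inside $T(P_{\scS})$. The linear functional $e_{q+2}$ attains its minimum $\varphi_P(e_{q+2})$ on $F$, because $e_{q+2}\in\sigma\subset\mu_F$. So $e_{q+2}$ is $\geq \varphi_P(e_{q+2})$ on $P_{\scS}$, with equality exactly on $F$. Hence a tangent vector $v$ of $P_{\scS}$ is outward-pointing at $F$ when $\langle e_{q+2}, v\rangle<0$; the vector $v=-e_{q+2}^\ast$ is one such. Using the outward-normal-first convention, the induced orientation on $F$ is $\iota_{v}$ applied to the orientation of $P_{\scS}$, up to the convention. This gives $(-e_{q+2}^\ast)$ contracted against $e_{q+2}^\ast\wedge\bigwedge_{i\ge q+3}e_i^\ast$, which is $-\bigwedge_{i\ge q+3}e_i^\ast$. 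That is exactly $-1$ times the chosen orientation of $F$, so each facet appears with coefficient $-1$, giving \eqref{eq:b-delta}.

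The main obstacle is Step 2. It requires fixing the boundary-orientation convention consistently with how $P_{\scS}$ was oriented through $T(P_{\scS})=(\bR\mu(\sigma[\scS]))^\perp$. It also requires checking that the representative $e_{q+2}$, which is defined only modulo $\bR\sigma[\scS]$, does not affect the sign. That holds because $e_{q+2}$ can be shifted by elements of $\bR\sigma[\scS]$, which vanish on $T(P_{\scS})$.
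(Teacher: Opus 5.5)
Your argument is correct and is essentially the paper's proof. You use the same bijection between facets of $P_{\scS}$ and cones $\sigma\in\Sigma_w(q+2)$ with $\sigma\succ\sigma[\scS]$ and $\dim\mu(\sigma)=q+2$. For existence and uniqueness you use that the star fan of $\sigma[\scS]$ refines the normal fan of $P_{\scS}$ in $M_\bR/\bR\sigma[\scS]$, where the paper runs an explicit perturbation argument inside two competing cones. The sign argument is also the paper's: $e_{q+2}$ is negative on an outward normal, which you make explicit with the outward-normal-first convention.

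One aside in Step 1 is false but harmless. A cone $\sigma\succ\sigma[\scS]$ excluded from the sum cannot have $\dim\mu(\sigma)=q+1$, since $\sigma\subset\mu(\sigma)$ forces $\dim\mu(\sigma)\geq q+2$. Instead it has $\dim\mu(\sigma)>q+2$, so $P_{\scS(\sigma)}$ is a face of $P_{\scS}$ of dimension less than $d-q-1$. The equality $P_{\scS(\sigma)}=P_{\scS}$ is what happens in the proof that $\partial c(P)=0$, where $\dim P_{\scS}\neq d-q$. Such cones are dropped by the condition $\dim P_{\scS(\sigma)}=d-q-1$ either way, so the conclusion is unaffected.
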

\begin{proof}
First, we show that the boundary of the polytope $P_{\scS}$ is the union of polytopes $P_{\scS(\sigma)}$ appearing in the right-hand side of \eqref{eq:b-delta}.
When $\dim P_{\scS}=d-q$, the dimension of $\mu(\sigma[\scS])$ is $q+1$.
Each facet $F$ of the polytope $P_{\scS}$ corresponds to a cone $\sigma_F \in \Sigma_P(q+2)$ such that $\sigma_F \succ \mu(\sigma[\scS])$.
Therefore, it suffices to show that for such a cone $\sigma_F$, there uniquely exists a cone $\sigma \in \Sigma_w(q+2)$ such that $\sigma \succ \sigma[\scS]$, and $\mu(\sigma)=\sigma_F$ which implies $P_{\scS(\sigma)}=F$.

First, we prove the existence of such a cone.
Since $\Sigma_w$ is a refinement of $\Sigma_P$, 
the cone $\sigma_F$ coincides with the union of all the cones $\sigma \in \Sigma_w(q+2)$ contained in the cone $\sigma_F$.
Therefore, there exists a cone $\sigma \in \Sigma_w(q+2)$ which is contained in the cone $\sigma_F$ and contains a point in $\rint(\sigma[\scS])$.
The cone $\sigma \in \Sigma_w(q+2)$ satisfies $\sigma \succ \sigma[\scS]$ and $\mu(\sigma)=\sigma_F$.

Next, we prove the uniqueness.
Suppose there are two such different cones $\sigma_1, \sigma_2 \in \Sigma_w(q+2)$.
We have $\sigma_1 \cap  \sigma_2=\sigma[\scS]$.
Let $e_i \in M$ $\lb 1 \leq i \leq q+1\rb$ be the primitive generators of 
$\sigma[\scS]$, and $e_{q+2}$ be the primitive generator of $\sigma_1$ not contained in $\sigma[\scS]$.
Then the primitive generator $e_{q+2}$ of $\sigma_2$ not contained in $\sigma[\scS]$ is written as
\begin{align}\label{eq:e'}
    e_{q+2}'=\sum_{i=1}^{q+2}c_i \cdot e_i
\end{align}
with integers $c_i$ $(1 \leq i \leq q+2)$.
Since $\mu(\sigma[\scS]) \prec \sigma_F$, there exists an element $n_0 \in N_\bR$ which equals $0$ on $\mu(\sigma[\scS])$ and is positive on $\sigma_F \setminus \mu(\sigma[\scS])$.
We have $\la e_i, n_0\ra=0$ for $1 \leq i \leq q+1$.
Since $e_{q+2}, e_{q+2}' \in \sigma_F \setminus \mu(\sigma[\scS])$, we also have $\la e_{q+2}, n_0\ra>0$ and $\la e_{q+2}', n_0\ra>0$.
These and \eqref{eq:e'} imply $c_{q+2}>0$.
Let $\varepsilon>0$ be a small real number, and consider
\begin{align}\label{eq:v0}
    v:=\varepsilon \cdot e_{q+2}'+\sum_{i=1}^{q+1} e_i \in \rint(\sigma_2).
\end{align}
By \eqref{eq:e'}, we obtain
\begin{align}\label{eq:v}
    v=\varepsilon \cdot c_{q+2} \cdot e_{q+2}+\sum_{i=1}^{q+1} (1+\varepsilon \cdot c_i) \cdot e_i.
\end{align}
When $\varepsilon>0$ is sufficiently small, we have $1+\varepsilon \cdot c_i >0$ for all $i \in \lc 1, \cdots, q+1\rc$, and \eqref{eq:v} is in $\rint(\sigma_1)$.
We obtain $v \in \rint (\sigma_1) \cap \rint (\sigma_2)$ which contradicts $\sigma_1 \cap  \sigma_2=\sigma[\scS]$.
Thus we conclude that for $\sigma_F$, there uniquely exists a cone $\sigma \in \Sigma_w(q+2)$ such that $\sigma \succ \sigma[\scS]$ and $P_{\scS(\sigma)}=F$, and the boundary of the polytope $P_{\scS}$ is the union of polytopes $P_{\scS(\sigma)}$ appearing in the right-hand side of \eqref{eq:b-delta}.

Lastly, we discuss the orientations of the polytopes in $\partial P_{\scS}$.
Let $e_i \in M$ $\lb 1 \leq i \leq q+2\rb$ be the elements such that 
$\lc e_i \relmid 1 \leq i \leq q+1 \rc$ are the primitive generators of 
$\sigma_{q+1}$, and $\lc e_i \relmid 1 \leq i \leq q+2 \rc$ are the primitive generators of $\sigma (\succ \sigma_{q+1})$.
Then for the face $P_{\scS(\sigma)} \prec P_{\scS}$, the element $e_{q+2}$ evaluates an outgoing normal vector negatively. 
From this and the definition of the orientations of $P_{\scS}$, we can see that the right-hand side of \eqref{eq:b-delta} should have the minus sign. 
We conclude \eqref{eq:b-delta}.
\end{proof}

\begin{lemma}\label{lm:c(P)-boundary}
    One has $\partial c(P)=0$.
\end{lemma}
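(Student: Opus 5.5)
The plan is to expand $\partial c(P)$ with the Leibniz rule for products of chains,
\[
\partial(\Delta_\scS \times P_\scS)=\partial \Delta_\scS \times P_\scS+(-1)^q \Delta_\scS \times \partial P_\scS,
\]
and then show that everything cancels. This splits $\partial c(P)$ into three groups of terms. The first group comes from the faces $\partial_i \Delta_\scS$ with $1 \leq i \leq q$. The second comes from the last face $\partial_{q+1}\Delta_\scS=\Delta_{\scS(-1)}$. The third comes from the terms $\Delta_\scS \times \partial P_\scS$, which Lemma \ref{lm:b-delta} lets us rewrite.

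\emph{First group.} I will reuse the involution $\scS \mapsto \scS[i]$ from \eqref{eq:S[i]} in the proof of Theorem \ref{th:main1}(1). We have $\sigma[\scS[i]]=\sigma[\scS]$, so $P_{\scS[i]}=P_\scS$ as sets, and the dimension condition holds for both or for neither. We also have $f(\scS[i])=-f(\scS)$, and this element determines the orientation of the face, so $P_{\scS[i]}$ is $P_\scS$ with the opposite orientation. Finally $\partial_i\Delta_{\scS[i]}=\partial_i\Delta_\scS$. Hence the term for $(\scS,i)$ cancels the term for $(\scS[i],i)$.

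\emph{Second and third groups.} Fix $\scS' \in \scrS_{q}^w$ with $\dim P_{\scS'}=d-q$. In $(-1)^{q+1}\partial(\Delta_{\scS'} \times P_{\scS'})$, the third-type term is
\[
(-1)^{q+1}(-1)^{q}\,\Delta_{\scS'}\times\partial P_{\scS'}=\sum_{\sigma}\Delta_{\scS'}\times P_{\scS'(\sigma)}
\]
by Lemma \ref{lm:b-delta}. The sum runs over $\sigma\in\Sigma_w(q+2)$ with $\sigma\succ\sigma[\scS']$ and $\dim P_{\scS'(\sigma)}=d-q-1$.

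On the other side, take $\scS=\scS'(\sigma)\in\scrS_{q+1}^w$. Its last-face term is $(-1)^{q+1}(-1)^{q+1}\Delta_{\scS'}\times P_{\scS'(\sigma)}$, which appears with a plus sign. So the two contributions have the same sign and add rather than cancel. I therefore need to recheck the orientation convention. Either the sign in Lemma \ref{lm:b-delta} or the sign of the last-face term must produce an opposite sign.

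The main obstacle is exactly this bookkeeping, together with two extra cases.

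First, there are sequences $\scS'(\sigma)$ with $\dim P_{\scS'(\sigma)}=d-q-1$ but $\dim P_{\scS'}<d-q$. For these the face term has no partner coming from $\partial P_{\scS'}$. I expect to show that such terms cancel among themselves by the same kind of argument as the Minkowski-weight balancing in \eqref{eq:MW-boundary}. Concretely, when $\mu(\sigma_{q+1})=\mu(\sigma)$, the face $P_{\scS'(\sigma)}$ equals $P_{\scS'}$ as a set but has the wrong dimension, and the sum over $\sigma$ of the orientation classes $f(\scS'(\sigma))$ vanishes by the balancing identity for the constant weight. Alternatively, these terms are degenerate chains: a face of dimension less than $d-q$ sitting inside an oriented $(d-q-1)$-dimensional face whose orientations are summed. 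I would first try to identify the fibers of $\mu$ over a cone of $\Sigma_P$ as a complete fan in the quotient, and apply the cancellation there.

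Second, there is the terminal case $q=d$, where $P_\scS$ is a point and $\partial P_\scS=0$.

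After the signs are fixed, with the convention $\partial(\alpha\times\beta)=\partial\alpha\times\beta+(-1)^{\dim\alpha}\alpha\times\partial\beta$, I expect the Lemma \ref{lm:b-delta} minus sign to give exact cancellation. That cancellation yields $\partial c(P)=0$.
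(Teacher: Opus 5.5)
Your three-way split of $\partial c(P)$ and the $\scS\mapsto\scS[i]$ cancellation of the first group are exactly the paper's argument, but as written the proof does not close. The sign clash you found between the second and third groups is an arithmetic slip, not a problem with the orientation conventions. The coefficient of $\Delta_{\scS'}\times P_{\scS'}$ in $c(P)$ for $\scS'\in\scrS_q^w$ is $(-1)^q$, not $(-1)^{q+1}$. With the correct coefficient, the third-type term is $(-1)^q(-1)^q\,\Delta_{\scS'}\times\partial P_{\scS'}=-\sum_\sigma\Delta_{\scS'}\times P_{\scS'(\sigma)}$ by Lemma~\ref{lm:b-delta}. The last-face term of $\scS'(\sigma)\in\scrS_{q+1}^w$ is $(-1)^{q+1}(-1)^{q+1}\,\Delta_{\scS'}\times P_{\scS'(\sigma)}=+\Delta_{\scS'}\times P_{\scS'(\sigma)}$. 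So these cancel for every $\scS'$ with $\dim P_{\scS'}=d-q$. Your text instead asserts that they add and leaves open whether some sign must change; none does.

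The remaining terms $\Delta_{\scS'}\times P_{\scS'(\sigma)}$ with $\dim P_{\scS'}\neq d-q$ and $\dim P_{\scS'(\sigma)}=d-q-1$ carry the actual content, and you only sketch them. Drop the ``degenerate chains'' alternative: $P_{\scS'(\sigma)}$ is a genuine $(d-q-1)$-dimensional polytope, so these are honest $(d-1)$-chains that must cancel in pairs. Your ``complete fan in the quotient'' idea is the right one, and it is what the paper does via the star fan of $\sigma_{q+1}$ and the normal fan of $P_{\scS'}$. Here is how it goes.

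From $d-q>\dim P_{\scS'}\geq\dim P_{\scS'(\sigma)}=d-q-1$ you get $P_{\scS'(\sigma)}=P_{\scS'}$ as sets. You also get $\mu(\sigma_{q+1})=\mu(\sigma)=:C$, of dimension $q+2$, with $\rint(\sigma_{q+1})\subset\rint(C)$. So $\sigma_{q+1}$ is an interior wall of the subdivision of $C$ by $\Sigma_w$. Hence there is exactly one other $\sigma'\in\Sigma_w(q+2)$ with $\sigma'\succ\sigma_{q+1}$ and $\sigma'\subset C$. It also satisfies $\dim P_{\scS'(\sigma')}=d-q-1$, and its new primitive generator is $e'_{q+2}=-e_{q+2}+\sum_{i\leq q+1}c_ie_i$. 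Then $\bigwedge_{i=1}^{q+2}e_i=-\bigwedge_{i=1}^{q+1}e_i\wedge e'_{q+2}$. So the two chains are the same polytope, with the same induced triangulation and opposite orientations, and they cancel.

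Note that your ``balancing identity for the constant weight'' holds only for the $\sigma$ lying in $C$, not for the sum over all $\sigma\succ\sigma_{q+1}$. That restriction is automatic here, because exactly those $\sigma$ satisfy the dimension condition.
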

\begin{proof}
One has
\begin{align}\label{eq:boundary1}
\partial c(P)
=
\sum_{q=0}^d \sum_{\substack{\scS \in \scrS_q^w \\ \dim P_{\scS}=d-q}}
\lc
\lb \sum_{i=1}^q (-1)^{q+i-1} \cdot \partial_i \Delta_\scS \times P_{\scS} \rb
+
\partial_{q+1} \Delta_\scS \times P_{\scS}
+
\Delta_\scS \times \partial P_{\scS}
\rc,
\end{align}
where $\partial_i \Delta_\scS$ denotes the facet of $\Delta_\scS$, which does not have the $i$-th vertex of $\Delta_\scS$ ($1 \leq i \leq q+1$).
Regarding the first part of \eqref{eq:boundary1}, if we consider $\scS[i] \in \scrS_q^w$ which is defined similarly to \eqref{eq:S[i]}, then one has $\partial_i \Delta_{\scS[i]}=\partial_i \Delta_\scS$ and $P_{\scS}=-P_{\scS[i]}$.
Therefore, we have
\begin{align}\label{eq:b-part1}
\sum_{\substack{\scS \in \scrS_q^w \\ \dim P_{\scS}=d-q}}
\sum_{i=1}^q (-1)^{q+i-1} \cdot \partial_i \Delta_\scS \times P_{\scS}=0,
\end{align}
since the terms of $(\scS, i)$ and $(\scS[i], i)$ cancel each other out.
The second part of \eqref{eq:boundary1} is
\begin{align}\label{eq:b-part2}
\sum_{q=0}^d \sum_{\substack{\scS \in \scrS_q^w \\ \dim P_{\scS}=d-q}}
\partial_{q+1} \Delta_\scS \times P_{\scS}
=
\sum_{q=0}^{d-1} \sum_{\scS \in \scrS_q^w} 
\Delta_\scS \times \lb 
\sum_{\substack{\sigma \in \Sigma_w (q+2)\\ \sigma \succ \sigma[\scS] \\ 
\dim P_{\scS(\sigma)}=d-q-1}} 
P_{\scS(\sigma)} \rb.
\end{align}
By \eqref{eq:b-part1}, \eqref{eq:b-part2}, and \pref{lm:b-delta}, we can see that \eqref{eq:boundary1} equals
\begin{align}\label{eq:boundary2}
\sum_{q=0}^{d-1} 
\sum_{\substack{\scS \in \scrS_q^w \\ \dim P_{\scS} \neq d-q}} 
\Delta_\scS \times 
\lb 
\sum_{\substack{\sigma \in \Sigma_w (q+2)\\ \sigma \succ \sigma[\scS] \\ 
\dim P_{\scS(\sigma)}=d-q-1}} 
P_{\scS(\sigma)} \rb.
\end{align}
(Notice that for $\scS \in \scrS_{q=d}^w$, the lattice polytope $P_{\scS}$ is a point and we have $\partial P_{\scS}=0$.)
We will show that all the terms in \eqref{eq:boundary2} cancel out and \eqref{eq:boundary2} becomes zero.

Let $\scS=\lc \sigma_1 \prec \cdots \prec \sigma_{q+1} \rc \in \scrS_{q}^w$ 
and $\sigma \in \Sigma_w(q+2)$ be elements such that 
$\sigma \succ \sigma[\scS]=\sigma_{q+1}$, $\dim P_{\scS} \neq d-q$, 
and $\dim P_{\scS(\sigma)}=d-q-1$ as in \eqref{eq:boundary2}.
Let further $e_i \in M$ $\lb 1 \leq i \leq q+2\rb$ be the vectors such that 
$\lc e_i \relmid 1 \leq i \leq j \rc$ are the primitive generators of 
$\sigma_j$ for all $1 \leq j \leq q+1$, 
and $\lc e_i \relmid 1 \leq i \leq q+2 \rc$ are the primitive generators of 
$\sigma$.
Since $d-q > \dim P_{\scS} \geq \dim P_{\scS(\sigma)}=d-q-1$, we have 
$P_{\scS} =P_{\scS(\sigma)}$.
We take an element $n_0 \in N_\bR$ such that 
the function $\varphi_{P}+n_0 \colon M_\bR \to \bR$ is constantly equal to $0$ on the cone $\sigma \subset M_\bR$.
The function $\varphi_{P}+n_0$ induces a convex piecewise linear function
\begin{align}\label{eq:star}
\Star (\sigma_{q+1})  \to \bR,
\end{align}
where $\Star (\sigma_{q+1}) \subset \left. M_\bR \middle/ (\bR \sigma_{q+1}) \right.$ is the star fan of $\sigma_{q+1} (\prec \sigma)$ (cf.~\cite[Section 3.2, (3.2.8)]{MR2810322}).
The polytope whose support function is \eqref{eq:star} is the translation 
(by the vector $n_0$) of $P_{\scS} =P_{\scS(\sigma)}$, and is a full-dimensional polytope in $(\bR \sigma)^\perp$.

We consider the polytope $P_{\scS}$ as being in 
$(\bR \sigma_{q+1})^\perp (\supset (\bR \sigma)^\perp)$, 
and take its normal fan $\Sigma_{\scS}$, which is in $\left. M_\bR \middle/ \bR \sigma_{q+1} \right.$.
The fan $\Sigma_{\scS}$ contains the $1$-dimensional linear subspace spanned by $e_{q+2}$ (\cite[Proposition 6.2.3(a)]{MR2810322}), and the fan 
$\Star(\sigma_{q+1})$ is a refinement of $\Sigma_{\scS}$ (\cite[Proposition 6.2.5(b)]{MR2810322}).
Since the fan $\Star(\sigma_{q+1})$ consists only of strictly-convex cones, we can see that the fan $\Star(\sigma_{q+1})$ contains two $1$-dimensional cones, one is generated by $e_{q+2}$ and the other is generated by $-e_{q+2}$, which will be denoted by $\rho_+$ and $\rho_-$ respectively.
(The cone $\rho_+$ corresponds to $\sigma$.)
This implies that there uniquely exists a cone $\sigma' \in \Sigma_w(q+2)$ (corresponding to the cone $\rho_-$) such that $\sigma' \succ \sigma_{q+1}$ and whose primitive generators are $e_i$ $(1 \leq i \leq q+1)$ and
\begin{align}\label{eq:eq'}
    e_{q+2}':=-e_{q+2}+\sum_{i=1}^{q+1} c_i \cdot e_i
\end{align}
written with some $c_i \in \bZ$.
The polytope $P_{\scS(\sigma)}$ (resp. $P_{\scS(\sigma')}$) is the face of the polytope $P_{\scS} \subset (\bR \sigma_{q+1})^\perp$ corresponding to the minimal cone in $\Sigma_\scS$ containing $\rho_+$ (resp. $\rho_-$), which is the $1$-dimensional linear subspace spanned by $e_{q+2}$.
Hence, this face is the whole polytope $P_{\scS}$, and we have $P_{\scS(\sigma)}=P_{\scS(\sigma')}=P_{\scS}$ as sets.
On the other hand, regarding the map \eqref{eq:fd-q} that we used for defining the orientations of polytopes $P_{\scS}$, we have
\begin{align}
    \bigwedge_{i=1}^{q+2} e_i=-\bigwedge_{i=1}^{q+1} e_i \wedge e_{q+2}'
\end{align}
by \eqref{eq:eq'}.
From this, we can see that the orientations of $P_{\scS(\sigma)}$ and $P_{\scS(\sigma')}$ are opposite.
Therefore, in \eqref{eq:boundary2}, the terms for $(\scS, \sigma)$ and $(\scS, \sigma')$ cancel each other out.
Thus \eqref{eq:boundary2} becomes zero.
We obtained $\partial c(P)=0$.
\end{proof}

We will use the following general fact in toric geometry:

\begin{proposition}\label{pr:iritani}
For a smooth projective toric variety $Y$ over $\bC$, the K-group $K(Y)$ is additively generated by anti-ample line bundles on $Y$. 
The same statement also holds when anti-ample line bundles are replaced by ample ones.
\end{proposition}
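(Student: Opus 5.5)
The plan is to reduce to the classical fact that on a smooth projective variety every coherent sheaf admits a finite resolution by sums of sufficiently negative line bundles, and then to write each line bundle on the toric variety $Y$ as an alternating sum of anti-ample ones.

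First, since $Y$ is smooth, every coherent sheaf $\scE$ has a finite locally free resolution, so $K(Y)$ is generated by classes of vector bundles. For a vector bundle (indeed any coherent sheaf) $\scF$ and a fixed ample line bundle $L$, Serre's theorem says $\scF \otimes L^{k}$ is globally generated for $k \gg 0$. This gives a surjection $\bigoplus L^{-k} \to \scF$. Iterating on the kernel yields a resolution
\[
\cdots \to \bigoplus L^{-k_2} \to \bigoplus L^{-k_1} \to \scF \to 0
\]
by sums of powers $L^{-k_i}$ with $k_i \geq 1$, which are anti-ample. By Hilbert's syzygy theorem in the form valid on smooth varieties, we can truncate after $\dim Y$ steps. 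The last kernel is then locally free, though not necessarily a sum of line bundles, so the truncation alone does not finish the argument.

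To close the argument, I would use the toric structure instead. By the splitting principle, or more concretely because $K(Y)$ is generated by classes of line bundles for smooth projective toric $Y$, it suffices to treat a single line bundle $\scO(D)$. The generation by line bundles follows from the toric stratification: $K(Y)$ is generated by the structure sheaves $\scO_{V(\sigma)}$ of orbit closures. Each of these is resolved by the Koszul complex of the transversal toric divisors cutting it out, whose terms are $\scO(-D_{i_1}-\cdots-D_{i_j})$.

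Now fix an ample $L$ and write $\scO(D) = \scO(D) \otimes L^{k} \otimes L^{-k}$. Choose $k$ large enough that $D + kL$ is ample, which is possible since the ample cone is open and nonempty. Then use the Koszul-type identity in $K(Y)$: for a global section $s$ of an ample line bundle $A$ whose zero locus is a divisor $H$,
\[
[\scO] = [A^{-1}] + [\scO_H].
\]
Iterating over $\dim Y + 1$ general sections of a very ample $A$ with empty common zero locus gives
\[
\sum_{j}(-1)^j \binom{\dim Y+1}{j}[A^{-j}] = 0,
\]
so $[\scO]$ is a $\bZ$-combination of $[A^{-j}]$ with $j \geq 1$. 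Tensoring with $\scO(D)$ and taking $A = L^{k}$ with $k$ large, each term $\scO(D) \otimes L^{-jk}$ with $j \geq 1$ is anti-ample. This expresses $[\scO(D)]$, and hence every class, via anti-ample line bundles.

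The ample case follows by the same identity with the roles of $A$ and $A^{-1}$ exchanged: dualize, using that $\scF \mapsto \scF^\vee$ is an involution on $K$ of a smooth variety sending anti-ample to ample line bundles.

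The main point needing care is the vanishing of the Koszul complex of $\dim Y+1$ sections without common zeros. This holds because that complex is exact when the sections generate $\scO$. Everything else is formal.
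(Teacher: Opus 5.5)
Your proof is correct and follows essentially the paper's route: both reduce to generation of $K(Y)$ by line bundles and then use the nilpotence relation $(1-[A^{-1}])^{\dim Y+1}=0$ to rewrite an arbitrary $[\scO(D)]$ as an integer combination of sufficiently negative twists. The differences are cosmetic. The paper inverts $[M]$ by a finite geometric series, whereas you multiply $[\scO]=\sum_{j\ge 1}c_j[A^{-j}]$ by $[\scO(D)]$, and you prove the nilpotence via the Koszul complex where the paper quotes it from the literature. One small slip: the condition you need is that $-D+kL$ (not $D+kL$) be ample, which of course also holds for $k\gg 0$.
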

\begin{proof}
We prove the former claim.
It is well-known that the K-group $K(Y)$ is additively generated by line bundles (cf.~e.g.~\cite[Proposition 3]{{MR1234308}}).
Therefore, it suffices to show that an arbitrary line bundle on $Y$ is written as a linear combination over $\bZ$ of anti-ample line bundles on $Y$.
Let $L$ be an arbitrary line bundle on $Y$, and take an anti-ample line bundle $M$ on $Y$.
Since $\lb 1- \ld M\rd \rb^{k}=0$ for sufficiently large $k \in \bZ_{>0}$ (cf.~\cite[Corollary 3.1.6]{MR224083}), we have
\begin{align}\label{eq:m-1}
    \ld M \rd^{-1} = \lc 1-\lb 1-\ld M \rd \rb \rc^{-1} = \sum_{i=0}^{k-1} \lb 1-\ld M \rd \rb^i.
\end{align}
For sufficiently large $n \in \bZ_{>0}$, the line bundle $N:=L \otimes M^n$ is anti-ample.
We have
\begin{align}\label{eq:Lnm}
    \ld L \rd = \ld N \rd \cdot \ld M \rd^{-n} 
    = \ld N \rd \cdot \lb \sum_{i=0}^{k-1} \lb 1-\ld M \rd \rb^i \rb^n
\end{align}
by \eqref{eq:m-1}, and this is a linear combination of $\ld  N \otimes M^j \rd$ with $j \in \bZ_{\geq 0}$.
Since $N \otimes M^j$ are anti-ample, we can conclude the former claim of \pref{pr:iritani}.
The latter claim also follows by the same argument, with anti-ample line bundles replaced by ample ones.
\end{proof}

Since the fan $\Sigma_w$ is unimodular, the associated toric variety $Y_w$ is smooth.
Furthermore, from the assumption that the function $\lambda \colon A \to \bQ$ of \eqref{eq:lambda} extends to a strictly-convex piecewise affine function on the triangulation $\scrT$ of the polytope $\Delta$, we can see that the function
\begin{align}
    (A_w-w) \cup \lc 0 \rc \to \bQ, \quad m \mapsto \lambda(m+w)-\lambda(w)
\end{align}
extends to a strictly-convex piecewise linear function on the fan $\Sigma_w$, which gives rise to an ample line bundle on $Y_w$ (cf.~e.g.~\cite[Theorem 6.1.14]{MR2810322}).
Therefore, $Y_w$ is a smooth projective toric variety.

By \pref{pr:iritani}, we can see, in particular, that the K-group $K(Y_w)$ of the toric variety $Y_w$ is generated by anti-nef line bundles on $Y_w$.
Let $\scE \in K(Y_w)$, and we express it as
\begin{align}\label{eq:scE}
    \scE=\sum_{j \in J} l_j \cdot [L_j]
\end{align}
with a finite index set $J$, integers $l_j$, and anti-nef line bundle $L_j$ on $Y_w$\footnote{By \pref{pr:iritani}, we can also decompose $\scE \in K(Y_w)$ as the sum of anti-ample line bundles. However, we use anti-nef line bundles in \eqref{eq:scE}, as the anti-nef assumption is sufficient for our construction of cycles.}.
For each $j \in J$, we choose a lattice polytope $P_j \subset N_\bR$ whose associated support function defines the nef line bundle $L_j^{-1}$; such a polytope is unique up to translation.
For any $j \in J$, the fan $\Sigma_w$ is a refinement of the normal fan $\Sigma_{P_j}$ of $P_j$ (\cite[Proposition 6.2.5(b)]{MR2810322}).
We define
\begin{align}\label{eq:c(scE)}
    c(\scE)
    :=\sum_{j \in J} l_j \cdot c(P_j)
    =
    \sum_{j \in J} l_j \cdot \lb \sum_{q=0}^d \sum_{\substack{\scS \in \scrS_q^w \\ \dim P_{j, \scS}=d-q}} (-1)^q \cdot
    \Delta_\scS \times P_{j, \scS} \rb,
\end{align}
where $c(P_j)$ is the the singular $d$-chain $c(P)$ of \eqref{eq:c(P)} for $P=P_j$, and $P_{j, \scS}$ is the face \eqref{eq:face-S} for $P=P_j$.
We regard $c(\scE)$ \eqref{eq:c(scE)} as a singular chain in 
$N_\bR \times (\left. N_\bR \middle/ N \right.)$ 
by the natural projection $N_\bR \times N_\bR \to N_\bR \times \lb \left. N_\bR \middle/ N \right. \rb$.
This singular chain $c(\scE)$ does not depend on the choices of the lattice polytopes $P_j$.
Notice, however, that it depends on the expression \eqref{eq:scE} of $\scE$, and is not uniquely determined just by the element $\scE \in K(Y_w)$.
By \pref{lm:c(P)-boundary}, we have $\partial c(\scE)=0$.

\begin{example}\label{eg:k3-2}
    In \pref{eg:k3}, consider
    \begin{align}\label{eq:eg-E}
        \scE:=\ld \scO_{\bP^3} \rd-\ld \scO_{\bP^3}(-1) \rd \in K(Y_{w=0}=\bP^3).
    \end{align}
We have
\begin{align}
    \ch(\scE)=1- \lb 1-H+\frac{1}{2}H^2\rb=H-\frac{1}{2}H^2,
\end{align}
where $H$ denotes the hyperplane class.
The integer $k$ of \eqref{eq:k} for $\scE$ is $1$, and the Minkowski weight corresponding to $(-1)^{d+1-k} \cdot \ch_{k}(\scE)=\ch_{1}(\scE)=H$ is the generator $a_0 \colon \Sigma_{w=0}(2) \to \bZ$ of $\MW^1(\Sigma_{w=0})$, which sends all cones in $\Sigma_{0}(2)$ to $1$.
The tropical cycle $c(a_0)$ associated with $a_0$ is shown by red lines on the right of \pref{fg:surface}.
It is supported in $\partial \nabla_{w=0}^{d-k=1}$.

    For \eqref{eq:eg-E}, we can take 
    \begin{align}
        P_1&:=\lc 0 \rc \subset N_\bR\\
        P_2&:=\conv \lb \lc 0, e_1^\ast, e_2^\ast, e_3^\ast \rc \rb \subset N_\bR
    \end{align}
    as lattice polytopes corresponding to the nef line bundles $\scO_{\bP^3}^{-1}=\scO_{\bP^3}$ and $\scO_{\bP^3}(-1)^{-1}=\scO_{\bP^3}(1)$ respectively.
 We have
    \begin{align}
        c(P_1)&=\sum_{\scS \in \scrS_{q=2}^{w=0}} \Delta_\scS \times \lc 0 \rc \\
c(P_2)&=\sum_{\scS \in \scrS_{q=2}^{w=0}} \Delta_\scS \times P_{2, \scS}
-\sum_{\scS \in \scrS_{q=1}^{w=0}} \Delta_\scS \times P_{2, \scS}
+\sum_{\scS \in \scrS_{q=0}^{w=0}} \Delta_\scS \times P_{2, \scS},
    \end{align}
and the polytope $P_{2, \scS}$ with $\scS \in \scrS_{q=2}^{w=0}$ is a point in $N$.
Therefore, when regarding $c(P_1)$ and $c(P_2)$ as singular chains in $N_\bR \times \lb \left. N_\bR \middle/ N \right. \rb$, we have
\begin{align}\label{eq:cp1cp2}
    c(\scE):=c(P_1)-c(P_2)=\sum_{\scS \in \scrS_{q=1}^{w=0}} \Delta_\scS \times P_{2, \scS}
-\sum_{\scS \in \scrS_{q=0}^{w=0}} \Delta_\scS \times P_{2, \scS}.
\end{align}
We can see that $c(\scE)$ is supported over $\partial \nabla_{w=0}^{d-k=1}$\footnote{This does not happen in general. In \pref{sc:step1-2}, we deform $c(\scE)$ to be supported in $\partial \nabla_{w}^{d-k}$ by adding boundaries of singular chains $\Delta_{\scS} \times c_\scS$ (cf.~\eqref{eq:t-c(scE)}, \pref{lm:cscds}). In the case of \pref{eg:k3-2}, the singular chains $c_\scS$ of \pref{lm:cscds} are all zero.}.
The lattice polytopes $P_{2, \scS_i}$ for 
$\scS_i \in \scrS_{q=1}^{w=0}$ of \eqref{eq:S_i} and
$\scS_0:=\cone \lb \lc e_0 \rc\rb \in \scrS_{q=0}^{w=0}$ are
\begin{align}
    P_{2, \scS_i}=
    \left\{ \begin{array}{ll}
\conv \lb \lc e_2^\ast, e_3^\ast \rc\rb & i=1 \\
\conv \lb \lc e_1^\ast, e_3^\ast \rc\rb & i=2 \\
\conv \lb \lc e_1^\ast, e_2^\ast \rc\rb & i=3 \\
\conv \lb \lc e_1^\ast, e_2^\ast, e_3^\ast \rc\rb & i=0.
\end{array} 
\right. 
\end{align}
The polytopes $P_{2, \scS_i}$ $(i=1, 2, 3)$ are arranged over the line segments $\Delta_{\scS_i}$ shown on the left of \pref{fg:surface}, and the polytope $P_{2, \scS_0}$ is placed at the intersection point $\bigcap_{i=1, 2, 3}\Delta_{\scS_i}=\Delta_{\scS_0}$.
These form part of $c(\scE)$ of \eqref{eq:cp1cp2}.
As stated in \pref{lm:b-delta}, the polytopes $P_{2, \scS_i}$ are oriented so that we have
\begin{align}
    \partial P_{2, \scS_0}=-P_{2, \scS_1}-P_{2, \scS_2}-P_{2, \scS_3}.
\end{align}
\end{example}

\subsection{Deforming cycles by adding boundaries of singular chains}\label{sc:step1-2}

Let $\scS \in \scrS_q^w$ ($q \in \lc 0, \cdots, d\rc$), and
$V(\sigma[\scS]) \subset Y_w$ be the closure of the torus orbit corresponding to the cone $\sigma[\scS] \in \Sigma_w (q+1)$.
By applying \cite[Theorem 13.4.1(b)]{MR2810322} for the restriction of the nef line bundle $L_j^{-1}$ to $V(\sigma[\scS])$, we obtain
\begin{align}
\vol \lb P_{j, \scS} \rb
    =
\deg \lb \ch_{d-q}\lb L_j^{-1} \rb \cap \ld V(\sigma[\scS])\rd \rb,
\end{align}
where $\vol \lb P_{j, \scS} \rb$ denotes the Euclidean volume of the lattice polytope $P_{j, \scS}$.
This implies
\begin{align}\label{eq:vol-sum0}
    \sum_{j \in J} l_j \cdot \vol \lb P_{j, \scS} \rb
    =
-\deg \lb \ch_{d-q}(\scE) \cap \ld V(\sigma[\scS])\rd \rb.
\end{align}
Let $k$ be the integer \eqref{eq:k} for $\scE \in K(Y_w)$ of \eqref{eq:scE}.
By the definition of the integer $k$, we have $\ch_{k'}(\scE)=0$ for all $k' <k$.
By this and \eqref{eq:vol-sum0}, we have
\begin{align}\label{eq:vol-sum}
\sum_{j \in J} l_j \cdot \vol \lb P_{j, \scS} \rb
=
0
\end{align}
for $\scS \in \scrS_q^w$ with $q \in \lc d+1-k, \cdots, d \rc$.

\begin{lemma}\label{lm:cscds}
There exists a family of singular chains 
$\lc c_{\scS} \relmid \scS \in \scrS_q^w, d+1-k \leq q \leq d\rc$ 
in the torus $\left. N_\bR \middle/ N \right.$ such that 
\begin{enumerate}
    \item for $\scS \in \scrS_q^w$ $(d+1-k \leq q \leq d)$, $c_{\scS}$ is a singular $(d+1-q)$-chain whose image is contained in the subtorus $\left. \sigma[\scS]^\perp \middle/ \lb \sigma[\scS]^\perp\cap N \rb \right. \subset \left. N_\bR \middle/ N \right.$, and
\item one has
\begin{align}\label{eq:cds1}
\partial c_{\scS}
&=\sum_{\substack{\sigma \in \Sigma_w(q+2) \\ \sigma \succ \sigma[\scS]}} c_{\scS(\sigma)}
+
\sum_{\substack{j \in J \\ \dim P_{j, \scS}=d-q}}
l_j \cdot P_{j, \scS} \\ \label{eq:cds2}
c_{\scS}&=-c_{\scS[i]}\quad  (i \in \lc 1, \cdots, q\rc),
    \end{align}
where $\scS(\sigma):=\lc \sigma_1 \prec \cdots \prec \sigma_{q+1} \prec \sigma \rc \in \scrS_{q+1}^w$ as in \pref{lm:b-delta}, and $S[i]$ is the one of \eqref{eq:S[i]}.
In \eqref{eq:cds1}, $P_{j, \scS} \subset N_\bR$ is regarded as a singular chain in $\left. N_\bR \middle/ N \right.$ by the natural projection $N_\bR \to \left. N_\bR \middle/ N \right.$.
\end{enumerate}
\end{lemma}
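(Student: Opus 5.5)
We construct the chains $c_\scS$ by descending induction on $q$, from $q=d$ down to $q=d+1-k$. At each stage the right-hand side of \eqref{eq:cds1} will turn out to be a cycle in the subtorus $T_\scS:=\left. \sigma[\scS]^\perp \middle/ \lb \sigma[\scS]^\perp\cap N \rb \right.$ whose homology class vanishes. We then choose $c_\scS$ to be a bounding chain for it. Write
\begin{align}
R_\scS:=\sum_{\substack{\sigma \in \Sigma_w(q+2) \\ \sigma \succ \sigma[\scS]}} c_{\scS(\sigma)}
+
\sum_{\substack{j \in J \\ \dim P_{j, \scS}=d-q}} l_j \cdot P_{j, \scS},
\end{align}
which is a $(d-q)$-chain. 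The torus $T_\scS$ has dimension $d-q$. The faces $P_{j,\scS}$ lie in translates of $\sigma[\scS]^\perp$, and by the inductive hypothesis $c_{\scS(\sigma)}\subset T_{\scS(\sigma)}\subset T_\scS$. So after translating each $P_{j}$ so that the relevant faces pass through lattice points (the chain $c(\scE)$ does not depend on the choice of $P_j$), $R_\scS$ lives in $T_\scS$.

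To fix signs consistently with \eqref{eq:cds2}, we define $c_\scS$ only for one representative $\scS$ in each orbit under the moves $\scS\mapsto\scS[i]$. We then set $c_{\scS[i]}:=\pm c_\scS$. This is consistent because $P_{j,\scS[i]}=-P_{j,\scS}$ as oriented polytopes (as noted in the proof of \pref{lm:c(P)-boundary}), and $\scS[i](\sigma)=\scS(\sigma)[i]$. Hence $R_{\scS[i]}=-R_\scS$ by induction, and the sign convention is forced.

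\emph{Base case $q=d$.} Here $T_\scS$ is a point and $P_{j,\scS}$ is a point, positively or negatively oriented. By \eqref{eq:vol-sum} (valid since $k\ge1$ whenever the range is nonempty), the $0$-chain $\sum_j l_j P_{j,\scS}$ has total degree $0$. It is therefore a boundary in the point $T_\scS$, and we take $c_\scS$ to be a $1$-chain bounding it.

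\emph{Inductive step.} Assume the $c_{\scS'}$ are built for all $q'>q$. First we check $\partial R_\scS=0$. By \eqref{eq:cds1} at level $q+1$,
\begin{align}
\partial c_{\scS(\sigma)}=\sum_{\sigma'\succ\sigma} c_{\scS(\sigma)(\sigma')}+\sum_j l_j P_{j,\scS(\sigma)}.
\end{align}
The first double sum over chains $\sigma_{q+1}\prec\sigma\prec\sigma'$ cancels in pairs: for fixed $\sigma'$ there are exactly two intermediate $\sigma$, because $\Sigma_w$ is simplicial. These two flags differ by the move $[q+2]$, so \eqref{eq:cds2} gives opposite signs. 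The second sum is handled by \pref{lm:b-delta}, which gives $\partial P_{j,\scS}=-\sum_\sigma P_{j,\scS(\sigma)}$ when $\dim P_{j,\scS}=d-q$. When $\dim P_{j,\scS}<d-q$, the terms $P_{j,\scS(\sigma)}$ with $\dim P_{j,\scS(\sigma)}=d-q-1$ cancel pairwise ($\sigma,\sigma'$ with opposite orientation), exactly as in the second half of the proof of \pref{lm:c(P)-boundary}. Together these give $\partial R_\scS=0$.

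Next we show $R_\scS$ is a boundary. Its degree $d-q$ equals $\dim T_\scS$, so its class in $H_{d-q}(T_\scS,\bZ)\cong\bZ$ is its degree. The chains $c_{\scS(\sigma)}$ are $(d-q)$-chains in the proper subtori $T_{\scS(\sigma)}$, of dimension $d-q-1$. We therefore homotope or subdivide them to be degenerate, so they contribute $0$ to the top class; equivalently, a top-dimensional chain supported in a lower-dimensional subtorus has zero fundamental-class pairing. The remaining contribution is $\sum_j l_j\vol(P_{j,\scS})$ in the lattice-normalized volume, with sign given by the orientation convention. This vanishes by \eqref{eq:vol-sum}, since $q\ge d+1-k$. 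Hence $R_\scS=\partial c_\scS$ for some $(d+1-q)$-chain $c_\scS$ in $T_\scS$, which completes the induction.

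The main obstacle is this last step. One must make sure the degree computation is legitimate: the $P_{j,\scS}$ are degree-$1$ fundamental-type chains after projection, and the normalization of $\vol$ in \eqref{eq:vol-sum0} matches the lattice volume of $\sigma[\scS]^\perp$. One must also make sure the bookkeeping of signs under $\scS\mapsto\scS[i]$ is compatible with the cancellation used in $\partial R_\scS=0$. If the degeneracy argument for the $c_{\scS(\sigma)}$ is awkward, the fallback is to pair $R_\scS$ with the integral volume form of $T_\scS$ (de Rham). This kills the chains in the lower-dimensional subtori automatically and reduces exactly to \eqref{eq:vol-sum}.
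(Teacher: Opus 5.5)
Your proposal is correct and is essentially the paper's proof. Both run a descending induction on $q$ and show that $R_\scS$ (the paper's $d_\scS$) is a $(d-q)$-cycle in $\sigma[\scS]^\perp/(\sigma[\scS]^\perp\cap N)$. Both kill its class by a signed-volume count, in which the $c_{\scS(\sigma)}$ contribute nothing because they lie in codimension-one subtori and the polytope terms vanish by \eqref{eq:vol-sum}, and both get \eqref{eq:cds2} by building $c_\scS$ for one flag per cone and propagating by the sign of the permutation. The only difference is cosmetic: you check $\partial R_\scS=0$ directly from \pref{lm:b-delta} and the pairwise cancellation in the proof of \pref{lm:c(P)-boundary}, whereas the paper reads the same identity off fiberwise from $\partial c(\scE)=0$.
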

\begin{proof}
We prove the lemma by constructing the singular chains $c_{\scS}$ $(\scS \in \scrS_q^w)$ inductively on $q$.
When $q=d$ and $\scS \in \scrS_d^w$, we have $\Sigma_w(q+2)= \emptyset$, and the polytope $P_{j, \scS}$ is a single point in $N$.
By \eqref{eq:vol-sum} for $q=d$, we can see that the right-hand side of \eqref{eq:cds1} equals zero.
We set $c_{\scS}:=0$ for all $\scS \in \scrS_d^w$.
This obviously satisfies the conditions (1) and (2) in \pref{lm:cscds}.

Let $l$ be an integer such that $d+1-k <l \leq d$.
Suppose that we have constructed all the singular chains $c_{\scS}$ $(\scS \in \scrS_q^w)$ with $l \leq q \leq d$ satisfying the conditions (1) and (2).
We will construct singular chains $c_{\scS}$ $(\scS \in \scrS_q^w)$ with 
$q=l-1$.
In the same way as in the proof of \pref{lm:c(P)-boundary}, we can obtain
\begin{align}
    0=\partial c(\scE)
    &=\sum_{j \in J} l_j \cdot \lb
    \sum_{q=0}^{d}
    \lb
    \sum_{\scS \in \scrS_q^w}
\Delta_\scS \times \lb 
\sum_{\substack{\sigma \in \Sigma_w(q+2)\\ \sigma \succ \sigma[\scS] \\ \dim P_{j, \scS(\sigma)}=d-q-1}}
P_{j,\scS(\sigma)} \rb
+
\sum_{\substack{\scS \in \scrS_q^w \\ \dim P_{j, \scS}=d-q}}
\Delta_\scS \times \partial P_{j, \scS}
\rb \rb \\
&=
\sum_{q=0}^{d} \sum_{\scS \in \scrS_q^w}
\Delta_\scS \times
\lb
\sum_{\substack{\sigma \in \Sigma_w(q+2)\\ \sigma \succ \sigma[\scS]}}
\sum_{\substack{j \in J \\ \dim P_{j, \scS(\sigma)}=d-q-1}}
l_j \cdot P_{j, \scS(\sigma)}
+
\sum_{\substack{j \in J \\ \dim P_{j, \scS}=d-q}}
l_j \cdot \partial P_{j, \scS}
\rb.
\end{align}
This implies
\begin{align}\label{eq:S-fiber}
\sum_{\substack{\sigma \in \Sigma_w(q+2)\\ \sigma \succ \sigma[\scS]}}
\sum_{\substack{j \in J \\ \dim P_{j, \scS(\sigma)}=d-q-1}}
l_j \cdot P_{j, \scS(\sigma)}
+
\sum_{\substack{j \in J \\ \dim P_{j, \scS}=d-q}}
l_j \cdot \partial P_{j, \scS}
=0
\end{align}
for all $\scS \in \scrS_q^w$ $(0 \leq q \leq d)$.

Consider \eqref{eq:S-fiber} for $\scS \in \scrS_q^w$ with $q=l-1$.
By \eqref{eq:cds1} for the singular chain $c_{\scS(\sigma)}$ which we have already constructed, the former part of \eqref{eq:S-fiber} equals
\begin{align}\label{eq:S-fiber-1}
\sum_{\substack{\sigma \in \Sigma_w(l+1)\\ \sigma \succ \sigma[\scS]}}
\lb
\partial c_{\scS(\sigma)}
-\sum_{\substack{\sigma' \in \Sigma_w(l+2) \\ \sigma' \succ \sigma}} c_{\scS(\sigma, \sigma')}
\rb
=
\sum_{\substack{\sigma \in \Sigma_w(l+1)\\ \sigma \succ \sigma[\scS]}}
\partial c_{\scS(\sigma)}
-
\sum_{\substack{\sigma \in \Sigma_w(l+1)\\ \sigma \succ \sigma[\scS]}}
\sum_{\substack{\sigma' \in \Sigma_w(l+2) \\ \sigma' \succ \sigma}} 
c_{\scS(\sigma, \sigma')},
\end{align}
where $\scS(\sigma, \sigma') \in \scrS_{l+1}^w$ is the sequence of cones obtained by adding $\sigma' \in \Sigma_w(l+2)$ to $\scS(\sigma)$ as the last cone.
The latter part of \eqref{eq:S-fiber-1} is zero, since the term of $\scS(\sigma, \sigma')$ and the term of $\scS(\sigma, \sigma')[l+1]$ (\eqref{eq:S[i]} with $\scS=\scS(\sigma, \sigma')$ and $i=l+1$) cancel each other out by \eqref{eq:cds2}.
Therefore, \eqref{eq:S-fiber} for $\scS \in \scrS_q^w$ with $q=l-1$ is written as $\partial d_\scS=0$ setting
\begin{align}\label{eq:dds}
d_{\scS}:=
    \sum_{\substack{\sigma \in \Sigma_w(l+1)\\ \sigma \succ \sigma[\scS]}}
c_{\scS(\sigma)}
+
\sum_{\substack{j \in J\\ \dim P_{j, \scS}=d+1-l}}
l_j \cdot P_{j, \scS}.
\end{align}

Regarding \eqref{eq:dds}, by the definition of $P_{j, \scS}$ (cf.~\eqref{eq:face-S}), we can see that $P_{j, \scS}$ sits in 
\begin{align}\label{eq:P-plane}
    \left. \mu_j\lb \sigma[\scS]\rb^\perp \middle/ 
    \lb \mu_j \lb \sigma[\scS] \rb^\perp \cap N \rb \right.
\subset
\left. \sigma[\scS]^\perp \middle/ \lb \sigma[\scS]^\perp\cap N \rb \right.,
\end{align}
where $\mu_j \colon \Sigma_w \to \Sigma_{P_j}$ denotes the map $\mu$ of \eqref{eq:mu} for $P=P_j$.
By the condition (1) for the singular chain $c_{\scS(\sigma)}$ (the induction hypothesis), we can see that $c_{\scS(\sigma)}$ also sits in 
\begin{align}\label{eq:codim1}
    \left. \sigma^\perp \middle/ \lb \sigma^\perp\cap N \rb \right. \subset
\left. \sigma[\scS]^\perp \middle/ \lb \sigma[\scS]^\perp\cap N \rb \right..
\end{align}
From these and $\partial d_\scS=0$, we can see that the singular chain $d_{\scS}$ is a $(d+1-l)$-cycle in the $(d+1-l)$-dimensional subtorus $\left. \sigma[\scS]^\perp \middle/ \lb \sigma[\scS]^\perp\cap N \rb \right.$.

We show that the cycle $d_{\scS}$ is actually trivial by computing the Euclidean volume (with sign) of the area covered by $d_{\scS}$.
Since $c_{\scS(\sigma)}$ appearing in \eqref{eq:dds} is in \eqref{eq:codim1}, the subset of codimension $1$ in $\left. \sigma[\scS]^\perp \middle/ \lb \sigma[\scS]^\perp\cap N \rb \right.$, the volume of the area covered by it is zero.
The volume of the area covered by the latter part of \eqref{eq:dds} is also zero by \eqref{eq:vol-sum} for $q=l-1 \geq d+1-k$.
Hence, the cycle $d_{\scS}$ is trivial, and there exists a singular $(d+2-l)$-chain on the subtorus $\left. \sigma[\scS]^\perp \middle/ \lb \sigma[\scS]^\perp\cap N \rb \right.$ whose boundary coincides with $d_{\scS}$.
We write it as $c_{\scS}$.
It is clear that this $c_{\scS}$ satisfies \eqref{eq:cds1} and the condition (1) of the lemma.

In the above construction of the singular chains $c_{\scS}$, we can construct them so that \eqref{eq:cds2} is also satisfied as follows:
For every cone $\sigma'' \in \Sigma_w(l)$, we choose an arbitrary sequence 
$\scS= \lc\sigma_1 \cdots \prec \sigma_l \rc \in \scrS_{l-1}^w$ such that $(\sigma_l=:)\sigma[\scS]=\sigma''$, and construct the singular chain $c_{\scS}$ for $\scS$ as explained above.
Let $e_i \in M$ $(1 \leq i \leq l)$ be the elements such that $e_i \in M$ $(1 \leq i \leq j)$ are the primitive generators of the cone $\sigma_j$ for all $j \in \lc 1, \cdots, l \rc$.
For every $\scS' \in \scrS_{l-1}^w$ which can be written as 
\begin{align}
    \scS'= \lc \sigma_1' \prec \cdots \prec \sigma_l' \rc  
    \quad 
    \lb \sigma_j':=\cone \lb \lc e_1', \cdots, e_j' \rc \rb, 1 \leq j \leq l \rb
\end{align}
with a permutation $\lc e_1', \cdots, e_l' \rc$ of $\lc e_1, \cdots, e_l \rc$, we define
\begin{align}
   c_{\scS'}:=(-1)^{\mathrm{sign}(\scS')} \cdot c_{\scS},
\end{align}
where $\mathrm{sign}(\scS')$ denotes the signature of the permutation $\lc e_1', \cdots, e_l' \rc$.
Then we have
\begin{align}
c_{\scS'(\sigma)}&=(-1)^{\mathrm{sign}(\scS')} \cdot c_{\scS(\sigma)} \quad \lb \scS'(\sigma):=\lc \sigma_1' \prec \cdots \prec \sigma_l' \prec \sigma \rc \in \scrS_l^w \rb \\
P_{j, \scS'}&=(-1)^{\mathrm{sign}(\scS')}  \cdot  P_{j, \scS},
\end{align}
respectively by the induction hypothesis and by the definition of the orientation of $P_{j, \scS}$.
By these, we have
\begin{align}
\partial c_{\scS'}
&=(-1)^{\mathrm{sign}(\scS')} \cdot \partial c_{\scS}\\
&=(-1)^{\mathrm{sign}(\scS')} \cdot \lb \sum_{\substack{\sigma \in \Sigma_w(l+1) \\ \sigma \succ \sigma[\scS]}} c_{\scS(\sigma)}
+
\sum_{\substack{j \in J \\  \dim P_{j, \scS}=d+1-l}}
l_j \cdot P_{j, \scS} \rb \\
&=\sum_{\substack{\sigma \in \Sigma_w(l+1) \\ \sigma \succ \sigma[\scS']}} c_{\scS'(\sigma)}
+
\sum_{\substack{j \in J \\  \dim P_{j, \scS'}=d+1-l}}
l_j \cdot P_{j, \scS'}
\end{align}
which is \eqref{eq:cds1} for $\scS' \in \scrS_{l-1}^w$.
Therefore, the family of singular chains $\lc c_{\scS} \rc_\scS$ constructed in this way satisfies \eqref{eq:cds1}.
It is also clear that this satisfies \eqref{eq:cds2} and the condition (1) of the lemma.
Thus we conclude the lemma.
\end{proof}

We define the singular chain $\tilde{c}(\scE)$ in $N_\bR \times (\left. N_\bR \middle/ N \right.)$ by
\begin{align}\label{eq:t-c(scE)}
\tilde{c}(\scE)
:=
c(\scE)
-\sum_{q=d+1-k}^d \sum_{\scS \in \scrS_q^w}
\partial \lb \Delta_\scS \times c_{\scS} \rb,
\end{align}
It is clear from $\partial c(\scE)=0$ and $\partial \circ \partial=0$ that $\tilde{c}(\scE)$ is also a cycle.
By abuse of notation, we write the image of the cycle $\tilde{c}(\scE)$ also as $\tilde{c}(\scE) \subset N_\bR \times (\left. N_\bR \middle/ N \right.)$.

For any cone $\sigma \in \Sigma_w(d+1-k)$, let $\iota \colon V(\sigma) \hookrightarrow Y_w$ be the closure of the torus orbit corresponding to the cone $\sigma$.
By the definition \eqref{eq:k} of the integer $k$, we have $\ch_{k'}(\scE)=0$ for all $k' <k$.
This implies $\ch_{k'}(\iota^\ast \scE)=0$ for all $k' <k$.
By using this and the Grothendieck--Riemann--Roch theorem, we obtain
\begin{align}
    \deg \lb \ch_{k}(\scE) \cap \ld V(\sigma)\rd \rb
    =
    \int_{V(\sigma)} \iota^\ast \lb \ch_{k}(\scE) \rb
    =
    \int_{V(\sigma)}  \ch (\iota^\ast \scE) \cdot \td\lb T_{V(\sigma)} \rb
    =
    \chi \lb V(\sigma), \iota^\ast \scE \rb \in \bZ.
\end{align}
This implies $\ch_{k}(\scE) \in  A^k(Y_w)$ (cf.~\cite[Proposition 2.4]{MR1415592}).
We write the Minkowski weight corresponding to $(-1)^{d+1-k} \cdot \ch_{k}(\scE)$ 
as $a_\scE \in \MW^{k}(\Sigma_w)$.

\begin{lemma}\label{lm:t-c(scE)}
The following statements hold:
\begin{enumerate}
    \item The image of $\tilde{c}(\scE)$ by the first projection $\pi_1$ \eqref{eq:1-proj} is contained in 
$\partial \nabla_w^{d-k}$.
\item For any point $n \in \mathring{\Delta}_\scS \subset \partial \nabla_w^{d-k}$ 
($\scS \in \scrS_q^w$, $q \in \lc 0, \cdots, d-k \rc$), 
the intersection $\tilde{c}(\scE) \cap \pi_1^{-1}(n)$ is in the $(d-q)$-dimensional subtorus
\begin{align}\label{eq:subtorus}
\lc n \rc 
\times 
\lb \left. \sigma[\scS]^\perp \middle/ \lb \sigma[\scS]^\perp\cap N \rb \right. \rb
\subset 
N_\bR \times \lb \left. N_\bR \middle/ N \right.\rb.
\end{align}
\item Suppose $q=d-k$, $\scS \in \scrS_{d-k}^w$, and 
$n \in \mathring{\Delta}_\scS \subset \partial \nabla_w^{d-k}$.
Then the intersection $\tilde{c}(\scE) \cap \pi_1^{-1}(n)$ is a $k$-cycle in the torus $\lc n \rc \times \left. \lb N_\bR \middle/ N \right. \rb$ whose homology class coincides with that of the $k$-dimensional subtorus \eqref{eq:subtorus} multiplied by the integer $a_{\scE}(\sigma[\scS])$.
Here, we consider the orientation of the subtorus \eqref{eq:subtorus} determined by the map \eqref{eq:fd-q}, in the same way as for the orientations of the polytopes $P_{\scS}$.
\end{enumerate}
\end{lemma}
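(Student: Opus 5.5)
We expand $\tilde{c}(\scE)$ explicitly and read off its fibres over each simplex. Using $\partial(\Delta_\scS \times c_\scS)=\partial\Delta_\scS \times c_\scS+(-1)^q\Delta_\scS\times\partial c_\scS$, we split $\sum_{q\ge d+1-k}\sum_\scS \partial(\Delta_\scS\times c_\scS)$ into three pieces:
\begin{itemize}
\item the faces $\partial_i\Delta_\scS\times c_\scS$ with $i\le q$;
\item the last face $\partial_{q+1}\Delta_\scS\times c_\scS$;
\item the term $(-1)^q\Delta_\scS\times\partial c_\scS$.
\end{itemize}
The first piece cancels in pairs $(\scS,i)\leftrightarrow(\scS[i],i)$ by \eqref{eq:cds2}, exactly as in \pref{th:main1}(1) and \pref{lm:c(P)-boundary}. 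In the third piece we substitute \eqref{eq:cds1}. Its summand $(-1)^q\sum_j l_j\,\Delta_\scS\times P_{j,\scS}$ then cancels, for every $q\ge d+1-k$, the corresponding summand of $c(\scE)$ in \eqref{eq:c(scE)}. Its summand $(-1)^q\Delta_\scS\times\sum_{\sigma}c_{\scS(\sigma)}$ is exactly matched against the second piece $\partial_{q+2}\Delta_{\scS(\sigma)}\times c_{\scS(\sigma)}=\Delta_\scS\times c_{\scS(\sigma)}$, reindexed from level $q+1$.

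Sign bookkeeping shows the last two families cancel for $q+1\ge d+2-k$, i.e. for $q\ge d+1-k$. What survives is
\begin{align*}
\tilde c(\scE)=\sum_{q=0}^{d-k}\sum_{\scS\in\scrS_q^w}(-1)^q\,\Delta_\scS\times\Bigl(\sum_{j:\dim P_{j,\scS}=d-q}l_j P_{j,\scS}\Bigr)\;-\;\sum_{\scS\in\scrS_{d-k}^w}\Delta_\scS\times\sum_{\sigma\succ\sigma[\scS]}c_{\scS(\sigma)},
\end{align*}
up to an overall sign on the last term, which must be fixed carefully. This expansion and its signs are the main bookkeeping step. From it, (1) is immediate, since every surviving $\Delta_\scS$ has $q\le d-k$ and so lies in $\partial\nabla_w^{d-k}$.

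For (2), take a point $n\in\mathring\Delta_\scS$. The fibre over $n$ receives contributions only from those simplices $\Delta_{\scS'}$ with $\scS'$ extending $\scS$ whose interior part contains $n$. Since $\mathring\Delta_\scS$ excludes $\Delta_{\scS(-1)}$, these are exactly the $\scS'$ having $\sigma[\scS']\succeq\sigma[\scS]$. Each fibre piece $P_{j,\scS'}$ lies in $\mu_j(\sigma[\scS'])^\perp\subset\sigma[\scS]^\perp$, by \eqref{eq:P-plane}. Each $c_{\scS(\sigma)}$ lies in $\sigma^\perp\subset\sigma[\scS]^\perp$, by \pref{lm:cscds}(1). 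Hence the whole fibre sits in the subtorus \eqref{eq:subtorus}.

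For (3), take $q=d-k$. The fibre over an interior point of $\Delta_\scS$ is then the chain
\begin{align*}
d_\scS=\sum_j l_j P_{j,\scS}+\sum_\sigma c_{\scS(\sigma)}
\end{align*}
in the $k$-dimensional torus $\sigma[\scS]^\perp/(\sigma[\scS]^\perp\cap N)$, up to a global sign. It is a cycle, by the same computation as \eqref{eq:S-fiber} combined with \eqref{eq:cds1}. Its class in $H_k\cong\bZ$ is detected by signed volume. The chains $c_{\scS(\sigma)}$ lie in codimension-one subtori and contribute zero. Hence the degree equals $\sum_j l_j\vol(P_{j,\scS})$, which by \eqref{eq:vol-sum0} is $-\deg(\ch_k(\scE)\cap[V(\sigma[\scS])])$.

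Comparing with $a_\scE=(-1)^{d+1-k}\ch_k(\scE)$ and including the factor $(-1)^q=(-1)^{d-k}$ gives multiplicity $a_\scE(\sigma[\scS])$ times the subtorus with orientation \eqref{eq:fd-q}. Points on lower faces of $\mathring\Delta_\scS$ are handled by the same argument, since $\mathring\Delta_\scS$ includes faces not containing the last vertex and the fibre chain is locally constant there. The main obstacle is the consistency of all these signs, namely the $(-1)^q$ in $c(P)$, the orientation convention of \pref{lm:b-delta}, and the product boundary formula. I would verify them first on \pref{eg:k3-2}, where the $c_\scS$ vanish and the answer is $a_0$.
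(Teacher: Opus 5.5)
Your proposal is correct and follows the paper's proof essentially step for step. It uses the same three-way expansion of $\partial(\Delta_\scS\times c_\scS)$, with cancellations via $\scS[i]$ and \eqref{eq:cds1}, and arrives at the same surviving pieces (the paper's $F_\scS$, which vanish for $q\ge d+1-k$); it then proves (2) by the same containment in $\sigma[\scS]^\perp$ and (3) by the same signed-volume count via \eqref{eq:vol-sum0}.

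Two small corrections. The sign you left open on the surviving $c_{\scS(\sigma)}$-term is $(-1)^{d-k}$, the same as on the $P_{j,\scS}$-terms at that level, so the fibre is $(-1)^{d-k}d_\scS$, which is what you use in (3). Also, $\mathring\Delta_\scS$ consists of the faces of $\Delta_\scS$ \emph{containing} the last vertex, not those avoiding it.
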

\begin{proof}
In order to prove the lemma, we compute the intersection $\tilde{c}(\scE) \cap \pi_1^{-1}(n)$.
The latter part of \eqref{eq:t-c(scE)} is
\begin{align}\label{eq:c(scE)2}
-\sum_{q=d+1-k}^d \sum_{\scS \in \scrS_q^w}
\lb
\sum_{i=1}^q (-1)^{i-1} \cdot \partial_i \Delta_\scS \times c_{\scS}
+
(-1)^q \cdot
\partial_{q+1} \Delta_\scS \times c_{\scS}
+
(-1)^q \cdot
\Delta_\scS \times \partial c_{\scS}
\rb,
\end{align}
where $\partial_i \Delta_\scS$ denotes the facet of $\Delta_\scS$, which does not have the $i$-th vertex of $\Delta_\scS$ ($1 \leq i \leq q+1$).
Regarding the first part of \eqref{eq:c(scE)2}, if we consider $\scS[i]$ of \eqref{eq:S[i]}, then one has $\partial_i \Delta_{\scS[i]}=\partial_i \Delta_\scS$ and $c_{\scS}=-c_{\scS[i]}$ from \eqref{eq:cds2}.
Therefore, the sum of the first part of \eqref{eq:c(scE)2}  becomes zero.
The remaining part of \eqref{eq:c(scE)2} can be written as
\begin{align}\label{eq:t-cd0}
\sum_{q=d-k}^{d-1} \sum_{\scS \in \scrS_q^w} \sum_{\substack{\sigma \in \Sigma_w(q+2) \\  \sigma \succ \sigma[\scS]}}
(-1)^q \cdot
\Delta_\scS \times c_{\scS(\sigma)}
-\sum_{q=d+1-k}^d \sum_{\scS \in \scrS_q^w}
(-1)^q \cdot
\Delta_\scS \times \partial c_{\scS}.
\end{align}
The singular chain $\tilde{c}(\scE)$ is the sum of \eqref{eq:t-cd0} and the singular chain $c(\scE)$ \eqref{eq:c(scE)}.
The part of the singular chain $\tilde{c}(\scE)$, which contains $\Delta_\scS$ $(\scS \in \scrS_q^w)$ is written as $\Delta_\scS \times F_\scS$ with
\renewcommand{\arraystretch}{2.5}
\begin{align}\label{eq:FS}
F_\scS:=
\left\{ \begin{array}{ll}
\displaystyle (-1)^{q} \cdot \lb \sum_{\substack{j \in J \\ \dim P_{j, \scS}=d-q}} l_j \cdot P_{j, \scS}
+ \sum_{\substack{\sigma \in \Sigma_w(q+2) \\  \sigma \succ \sigma[\scS]}}
c_{\scS(\sigma)}
-
\partial c_{\scS} \rb & q \in \lc d+1-k, \cdots, d \rc \\
\displaystyle (-1)^{d-k} \cdot \lb \sum_{\substack{j \in J \\ \dim P_{j, \scS}=k}} 
l_j \cdot P_{j, \scS}
+ \sum_{\substack{\sigma \in \Sigma_w(d-k+2) \\  \sigma \succ \sigma[\scS]}}
c_{\scS(\sigma)} \rb & q=d-k \\
\displaystyle (-1)^{q} \cdot \lb \sum_{\substack{j \in J \\ \dim P_{j, \scS}=d-q}} 
l_j \cdot P_{j, \scS} \rb & q \in \lc 0, \cdots, d-k-1 \rc.
\end{array} 
\right.
\end{align}
\renewcommand{\arraystretch}{1.0}

By \eqref{eq:cds1}, we can see that $F_\scS=0$ for $\scS \in \scrS_q^w$ with $q \in \lc d+1-k, \cdots, d \rc$.
The claim (1) of the lemma follows from this.
We can also see that for any point $n \in \mathring{\Delta}_\scS$ 
($\scS \in \scrS_q^w$, $q \in \lc 0, \cdots, d-k \rc$), 
the $\lb \left. N_\bR \middle/ N \right.\rb$-component of the intersection 
$\tilde{c}(\scE) \cap \pi_1^{-1}(n) \subset \lc n \rc \times \lb \left. N_\bR \middle/ N \right.\rb$ 
is
\begin{align}\label{eq:int-preimage}
\sum_{q'=q}^{d-k}
\sum_{\substack{\scS' \in \scrS_{q'}^w \\ \scS' \supset \scS} }
F_{\scS'},
\end{align}
since $\Delta_{\scS'}$ with $\scS' \supset \scS$ contains $\mathring{\Delta}_\scS \ni n$.
As we saw in \eqref{eq:P-plane}, the polytope $P_{j, \scS'}$ appearing in $F_{\scS'}$ of \eqref{eq:int-preimage} sits in 
\begin{align}\label{eq:ssss}
\left. \sigma[\scS']^\perp \middle/ \lb \sigma[\scS']^\perp\cap N \rb \right. 
    \subset \left. \sigma[\scS]^\perp \middle/ \lb \sigma[\scS]^\perp\cap N \rb \right..
\end{align}
We can see from \pref{lm:cscds}(1) that $c_{\scS'(\sigma)}$ appearing in $F_{\scS'}$ with $\scS' \in \scrS_{q'=d-k}^w$ also sits in \eqref{eq:ssss}.
Therefore, \eqref{eq:int-preimage} sits in \eqref{eq:ssss}, and we can see that the claim (2) of the lemma holds.

Lastly, we prove the claim (3) of the lemma.
\eqref{eq:int-preimage} with $\scS \in \scrS_{d-k}^w$ is
\begin{align}\label{eq:int-preimage2}
    (-1)^{d-k} \cdot 
    \lb 
    \sum_{\substack{j \in J \\ \dim P_{j, \scS}=k}} 
l_j \cdot P_{j, \scS}
+ \sum_{\substack{\sigma \in \Sigma_w(d-k+2) \\  \sigma \succ \sigma[\scS]}}
c_{\scS(\sigma)}
\rb.
\end{align}
The argument in the third paragraph of the proof of \pref{lm:cscds} also works for $l=d+1-k$, and we have $\partial d_\scS=0$ for $d_\scS$ of \eqref{eq:dds} with $\scS \in \scrS_{d-k}^w$.
We can see that \eqref{eq:int-preimage2} is a $k$-cycle in the $k$-dimensional subtorus \eqref{eq:subtorus}.
The singular chain $c_{\scS(\sigma)}$ in \eqref{eq:int-preimage2} is in the subset of codimension $1$, $\left. \sigma^\perp \middle/ \lb \sigma^\perp\cap N \rb \right. \subset
\left. \sigma[\scS]^\perp \middle/ \lb \sigma[\scS]^\perp\cap N \rb \right.$, and the volume of the area covered by it is zero.
By \eqref{eq:vol-sum0} for $q=d-k$, the volume of the area covered by the former part of \eqref{eq:int-preimage2} is 
\begin{align}
    (-1)^{d+1-k} \cdot \deg \lb \ch_{k}(\scE) \cap \ld V(\sigma[\scS])\rd \rb
    =a_\scE \lb \sigma[\scS] \rb
\end{align}
(cf.~\eqref{eq:amw}).
Therefore, the volume of the area covered by \eqref{eq:int-preimage2} in total is $a_\scE \lb \sigma[\scS] \rb$.
This implies the claim (3).
We conclude \pref{lm:t-c(scE)}.
\end{proof}

\subsection{Phase shift}\label{sc:step1-3}

For $m \in A$, we set
\begin{align}\label{eq:mu_m}
\mu_m \colon N_\bC \to \bC, \quad n \mapsto \lambda_m + \la m, n \ra.
\end{align}
We take a small real constant $\kappa >0$, and define
\begin{align}\label{eq:nkw}
N_\kappa^w:=\lc n \in N_\bR \relmid \mu_w (n)-\kappa \leq \min_{m \in A \setminus \lc w \rc} \mu_m(n) \leq \mu_w(n)+\kappa \rc.
\end{align}
Here we choose the constant $\kappa >0$ so that the statements in \cite[Lemma 3.0.2]{MR4782805} hold.
The set $N_\kappa^w$ is a neighborhood of $\partial \nabla_w$.
For a point $n \in N_\bR$, we also set
\begin{align}\label{eq:K}
K_\kappa^n:= \lc k \in A \setminus \lc w \rc \relmid \mu_k(n) \leq \mu_w(n)+\kappa \rc.
\end{align}
We choose a branch of the argument $\arg \lb - c_m/c_w \rb$ for every $m \in A \setminus \lc w \rc$, and take a smooth function $\phi \colon N_{\kappa}^w \to N_\bR$ such that
\begin{align}\label{eq:phase-shift}
\la k-w, \phi(n) \ra 
= - \frac{1}{2 \pi} \arg \lb -\frac{c_k}{c_w} \rb
\end{align}
for any $n \in N_{\kappa}^w$ and $k \in K_{2\kappa}^n$.
Such a function $\phi$ is called a \emph{phase-shifting function} in \cite[Section 5.2]{MR4194298}.
We refer the reader to Example 5.4 in loc.cit.~for an example of a function $\phi$ (with $w=0, c_w=-1$).
For the construction of $\phi$, see also \cite[Section 3]{MR4782805}.
For notational convenience, we define our phase-shifting function to be $\left. 1 \middle/ 2 \pi \right.$ times the ones used in \cite{MR4194298, MR4782805}.

We define the $d$-cycle $C(\scE) \subset N_\bR \times \lb \left. N_\bR \middle/ N \right. \rb$ to be the image of $\tilde{c}(\scE) \subset N_\bR \times \lb \left. N_\bR \middle/ N \right. \rb$ of \eqref{eq:t-c(scE)} by the map
\begin{align}\label{eq:shift-map}
    \Phi \colon  
    N_\kappa^w \times \lb \left. N_\bR \middle/ N \right.\rb 
    \to 
    N_\bR \times \lb \left. N_\bR \middle/ N \right. \rb, 
    \quad (n, n') \mapsto (n, n'+\phi(n)).
\end{align}
We will show that the cycle $C(\scE)$ satisfies the conditions in \pref{th:main2}(2).
It is obvious from \pref{lm:t-c(scE)}(1) that the $d$-cycle $C(\scE)$ satisfies the condition (2-a) of \pref{th:main2}.
The homology class of the subtorus \eqref{eq:subtorus} is exactly $f(\scS)$ as an element of $\bigwedge^{d-p} N$, and we can see from \pref{lm:t-c(scE)}(3) that the condition (2-b) of \pref{th:main2} is also satisfied. 
The remaining condition (2-c) of \pref{th:main2} will be proved in the next subsection.

\subsection{Construction of small perturbations}\label{sc:step2}

In this subsection, we construct the small perturbation of \pref{th:main2}(2-c) by using the technique in \cite[Section 5.2]{MR4194298}.
There are also similar or related constructions in \cite{MR2240909, MR2529936, MR2871160, MR3228454, MR3948684, MR4782805}.
For $w \in W$, we rewrite the equation $f=0$ as $-\lb f- k_w z^w \rb / k_w z^w=1$, and set
\begin{align}
f_t^w:=\left. -\lb f- k_w z^w \rb / k_w z^w \right|_{x=t} \in \bC \ld M \rd.
\end{align}
Then $\mathring{Z}_t=\lc z \in N_{\bC^\ast} \relmid f_t^w(z)=1 \rc$.
For $m \in A$, we set $\lambda_m:=\mathrm{val}(k_m) \in \mathbb{Q}$, 
and let $c_m \in \bC^\ast$ denote the coefficient of $x^{\lambda_m}$ in $k_m \in K$.
We also write its absolute value $|c_m|$ as $r_m \in \bR_{>0}$.
We define 
\begin{align}\label{eq:tft}
\tilde{f}_t^w&:=\sum_{m \in A \setminus \lc w \rc} \frac{r_m}{r_w} t^{\lambda_m-\lambda_w} z^{m-w}.
\end{align}

\begin{lemma}
\label{lm:delta1}
For sufficiently small $t >0$, there exists a continuous map 
$\delta_{1, t} \colon \partial \nabla_w \to N_\bR$ 
satisfying the following conditions:
\begin{enumerate}
\item For any $n \in \partial \nabla_w$, one has
\begin{align}
\tilde{f}_t^w \lb i_t \lb n+\delta_{1, t}(n) \rb \rb
=
1.
\end{align}
\item $\left| \left| \delta_{1, t} \right| \right|_{C^0} = O \lb(-\log t)^{-1} \rb$, 
where $|| \bullet ||_{C^0}$ denotes the $C^0$-norm over $\partial \nabla_w$.
\end{enumerate}
\end{lemma}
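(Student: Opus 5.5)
The plan is to find $\delta_{1,t}(n)$ as a small displacement of $n$ along the ray from a fixed interior point of $\nabla_w$, with the displacement length fixed by a one-variable intermediate value argument.

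\textbf{Setup.} Since $w\in\rint(\Delta)$, the cell $\nabla_w$ is a bounded polytope, so $\partial\nabla_w$ is compact. Fix a point $n_0\in\rint(\nabla_w)$. For $m\in A\setminus\{w\}$ put $g_m:=\mu_m-\mu_w$, an affine function on $N_\bR$. Then:
\begin{itemize}
\item $g_m\geq 0$ on $\nabla_w$;
\item $g_m(n_0)>0$ for all $m\neq w$;
\item for each $n\in\partial\nabla_w$, some $m$ has $g_m(n)=0$.
\end{itemize}
Fix a constant $\kappa'$ with $0<\kappa'<\tfrac12\min_{m\neq w}g_m(n_0)$. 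Write $L:=-\log t$, so that $t^{a}=e^{-aL}$ and $i_t(n)=t^{n}$.

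\textbf{Ansatz.} I look for $\delta_{1,t}(n)=s\,(n-n_0)$ with $s\in\bR$ small. Since $g_m$ is affine,
\begin{align}
g_m\bigl(n+s(n-n_0)\bigr)=g_m(n)-s\bigl(g_m(n_0)-g_m(n)\bigr).
\end{align}
Substituting $s=u/L$ gives
\begin{align}
F_n(u):=\tilde f_t^w\bigl(i_t(n+\tfrac{u}{L}(n-n_0))\bigr)=\sum_{m\neq w}\frac{r_m}{r_w}\,t^{g_m(n)}\,e^{u\,(g_m(n_0)-g_m(n))}.
\end{align}
Split the sum into near terms ($g_m(n)<\kappa'$) and far terms ($g_m(n)\geq\kappa'$).
\begin{itemize}
\item For a near term the coefficient $g_m(n_0)-g_m(n)$ is at least $\kappa'$. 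So each near term is positive and strictly increasing in $u$, with logarithmic derivative at least $\kappa'$.
\item For $|u|\leq C$, each far term is $O(t^{\kappa'}e^{C'C})$, and so is its $u$-derivative. These bounds are uniform in $n$ by compactness.
\item The set of near terms is nonempty, since it contains the active $m$ with $g_m(n)=0$.
\end{itemize}

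\textbf{Intermediate value step.} Choose $C$ large, independent of $n$ and $t$, so that two things hold.
\begin{itemize}
\item At $u=C$, one active term alone exceeds $(\min_m r_m/r_w)\,e^{C\kappa'}>2$.
\item At $u=-C$, all near terms together are at most $\sum_m (r_m/r_w)e^{-C\kappa'}<\tfrac14$.
\end{itemize}
Then take $t$ small enough that the far terms contribute less than $\tfrac14$, and less than $\kappa'/4$ to the derivative, on $[-C,C]$. This gives $F_n(-C)<\tfrac12$ and $F_n(C)>2$, so a root $u$ of $F_n(u)=1$ exists.

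Moreover, on the set where $F_n\geq\tfrac12$ the near terms sum to at least $\tfrac14$. Hence $F_n'(u)\geq \kappa'/4-\kappa'/4\cdot(\text{small})>0$ there, after shrinking the far-term bound further if needed. So $F_n$ is strictly increasing wherever it is near $1$, and the root $u(n)\in(-C,C)$ is unique.

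\textbf{Continuity and size.} The function $(n,u)\mapsto F_n(u)$ is smooth in $u$ and continuous in $n$. Together with uniqueness and the strict positivity of $\partial_uF$ at the root, this shows $u(n)$ depends continuously on $n$. For example, the implicit function argument applies, or one notes that the unique root of a monotone family depending continuously on a parameter is continuous. Setting $\delta_{1,t}(n):=\tfrac{u(n)}{L}(n-n_0)$ gives condition (1) by construction. It also gives
\begin{align}
\|\delta_{1,t}\|_{C^0}\leq \frac{C\,\max_{n\in\partial\nabla_w}|n-n_0|}{-\log t},
\end{align}
which is condition (2).

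\textbf{Main obstacle.} The delicate point is uniformity near lower-dimensional faces of $\partial\nabla_w$, where some $g_m(n)$ are small but positive. There the naive split into active and inactive terms degenerates. The threshold $\kappa'$, taken below half of $\min g_m(n_0)$, is what makes it work: every term that is not exponentially small in $t$ is monotone in the correct direction with a uniform rate, and this is where the care must go.
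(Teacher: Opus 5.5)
Your proof is correct, but it takes a different route from the paper's.

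\textbf{The paper's route.} The paper moves each $n\in\partial\nabla_w$ by flowing for unit time along the field $(1-\tilde g_t^w(n))\operatorname{grad}\tilde g_t^w/|\operatorname{grad}\tilde g_t^w|^2$, where $\tilde g_t^w=\tilde f_t^w\circ i_t$. Along the trajectory, $\tilde g_t^w$ then interpolates linearly from $\tilde g_t^w(n)$ to $1$. The $O((-\log t)^{-1})$ bound comes from the imported estimate $|\operatorname{grad}\tilde g_t^w|/|\tilde g_t^w|\geq(-\log t)(C_1-C_2t^\kappa)$ on $N_\kappa^w$ (Lemma~\ref{lm:grad}). A continuation argument keeps the trajectory inside $N_\kappa^w$.

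\textbf{Your route.} You fix the direction of motion, radially from an interior point $n_0$, and solve a one-variable problem in $u=s\cdot(-\log t)$. The uniform bound $g_m(n_0)-g_m(n)>\kappa'$ on the non-negligible terms does the work of the gradient estimate. This makes your version elementary and self-contained, and it places $n+\delta_{1,t}(n)$ on the ray through $n$.

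You could also shortcut your uniqueness step. Since $\tilde g_t^w$ is a positive combination of exponentials of affine functions, it is convex. As $\tilde g_t^w(n_0)<1$ for small $t$, it crosses the level $1$ exactly once, with positive derivative, on each ray from $n_0$. This replaces your near/far derivative estimate. Combined with boundedness of $\{\tilde g_t^w\le 1\}$ (the rays of the complete fan $\Sigma_w$ positively span), it also shows that $n\mapsto n+\delta_{1,t}(n)$ is a homeomorphism from $\partial\nabla_w$ onto the level set $\{\tilde g_t^w=1\}$.

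\textbf{What the paper's flow buys.} It is local and needs no center of star-shapedness. It is also the same mechanism used for $\delta_{2,t}$ in Lemma~\ref{lm:delta2}, where the equation is complex-valued and no monotonicity along a real ray is available.

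\textbf{Two points to write out.}
\begin{enumerate}
\item Justify $g_m(n_0)>0$: a non-constant affine function that is nonnegative on $\nabla_w$ cannot vanish at an interior point.
\item Make the far-term derivative bound strictly smaller than $\kappa'/4$. It is then dominated by the near-term derivative, which exceeds $\kappa'/4$ wherever $F_n\geq\frac{1}{2}$ on $[-C,C]$.
\end{enumerate}
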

\begin{proof}
    This is proved in the same way as \cite[Proposition 5.3]{MR4194298} and \cite[Proposition 3.0.5]{MR4782805}.
We consider the smooth function
\begin{align}
    \tilde{g}_t^w \colon N_\bR \to \bR, 
    \quad n \mapsto \tilde{f}_t^w(i_t(n))=\sum_{m \in A \setminus \lc w \rc} \frac{r_m}{r_w} t^{(\mu_m-\mu_w)(n)},
\end{align}
and let
    \begin{align}
        \grad \tilde{g}_t^w
        =\lb \frac{\partial \tilde{g}_t^w}{\partial n_0}, \cdots, \frac{\partial \tilde{g}_t^w}{\partial n_d} \rb
    \end{align}
    denote the gradient vector field on $N_\bR$, where $(n_0, \cdots, n_d)$ are coordinates on $N_\bR \cong \bR^{d+1}$.
    We consider the differential equation for an unknown function 
    $c \colon \partial \nabla_w \times [0, 1+\varepsilon] \to N_\bR$
\begin{align}\label{eq:de}
        \frac{d}{ds} c \lb n, s \rb
        =\lb 1- \tilde{g}_t^w(n) \rb \cdot 
        \frac{\grad \tilde{g}_t^w}{\left| \grad \tilde{g}_t^w \right|^2} \lb c \lb n, s \rb \rb
\end{align}
with the initial condition $c \lb n, 0 \rb=n \in \partial \nabla_w$, where $\varepsilon>0$ is a small real constant.
We will prove \pref{lm:delta1} by showing that there exists a global solution $c \lb n, s \rb$ for \eqref{eq:de} contained in $N_{\kappa}^w$, 
and that the map $\delta_{1, t} \colon \partial \nabla_w \to N_\bR$ defined by
\begin{align}
\delta_{1, t} (n):=c(n, 1)-n
\end{align}
satisfies the conditions (1) and (2) of \pref{lm:delta1}.

We will use the following technical lemma:

\begin{lemma}\label{lm:grad}
When $t>0$ is sufficiently small, one has 
$\left| \grad \tilde{g}_t^w (n) \right| \neq 0$ on $N_{\kappa}^w$.
Furthermore, there exist constants $C_1, C_2 >0$ such that for any $n \in N_{\kappa}^w$ satisfying $\tilde{g}_t^w \lb n \rb \neq 0$, we have
\begin{align}\label{eq:grad/g}
\frac{\left| \grad \tilde{g}_t^w (n) \right| }{\left| \tilde{g}_t^w (n) \right|}
\geq
(-\log t)\lb C_1-C_2 t^\kappa \rb.
\end{align}
\end{lemma}

\pref{lm:grad} is \cite[Lemma 3.0.7]{MR4782805} with $k_m=r_m \cdot x^{\lambda_m}$ $(m \in A \setminus \lc w \rc)$, $k_w=-r_w \cdot x^{\lambda_w}$, and $x=0$.
The range of $n$ is also restricted to $N_\kappa^w$, since this suffices for our present purposes.

We fix $n \in \partial \nabla_w$.
Let $s(n) \in \ld 0, 1+\varepsilon\rd$ be the supremum of $s' \in \ld 0, 1+\varepsilon\rd$ such that there exists a solution $c(n, s)$ of \eqref{eq:de} on the interval $\ld 0, s' \rb$ and $c(n, s) \in N_{\kappa}^w$ for all $s \in \ld 0, s' \rb$.
For any $s_0 \in \ld 0, s(n) \rb$, we have
\begin{align}
\tilde{g}_t^w \lb c \lb n, s_0 \rb \rb-\tilde{g}_t^w \lb c \lb n, 0 \rb \rb
&=
\int_0^{s_0} \grad \tilde{g}_t^w \lb c \lb n, s \rb \rb \cdot \frac{d}{ds} c \lb n, s \rb ds\\
&=\int_0^{s_0} \lb 1- \tilde{g}_t^w(n) \rb ds\\
&=s_0 \cdot \lb 1- \tilde{g}_t^w(n) \rb
\end{align}
by the definition of the differential equation \eqref{eq:de}.
Hence, we have
\begin{align}\label{eq:ggxi}
\tilde{g}_t^w \lb c \lb n, s_0 \rb \rb
=
(1-s_0)\tilde{g}_t^w(n)+s_0.
\end{align}
From this, we can see that $\tilde{g}_t^w \lb c \lb n, s_0 \rb \rb$ is in the line connecting $1$ and $\tilde{g}_t^w(n)(>0)$, and that $\tilde{g}_t^w \lb c \lb n, s_0 \rb \rb$ is greater than some positive real constant.
By this, \eqref{eq:de}, and \eqref{eq:grad/g}, we obtain
\begin{align}
\left| \frac{d}{ds} c \lb n, s \rb \right| 
= \frac{\left| 1-\tilde{g}_t^w(n) \right|}{\left| \grad \tilde{g}_t^w \lb c \lb n, s \rb \rb \right|}
< C_3 (-\log t)^{-1}
\end{align}
for $s \in \ld 0, s(n) \rb$, where $C_3>0$ is some real constant.
This implies the limit $\lim_{s \to s(n)-0} c(n,s) \in N_\bR$ exists.

Suppose $s(n) <1+\varepsilon$.
Then the solution for \eqref{eq:de} can be extended to a larger interval 
$\ld 0, s(n)+\varepsilon_1 \rb (\subset \ld 0, 1+\varepsilon \rb)$ 
with some small $\varepsilon_1 >0$, and for any $s_0 \in \ld 0, s(n)+\varepsilon_1 \rb$, one has
\begin{align}\label{eq:cc}
\left| c(n, s_0)-c(n, 0) \right| 
\leq \int_0^{s_0} \left| \frac{d}{ds} c \lb n, s \rb \right| ds 
\leq (1+\varepsilon) C_3 (-\log t)^{-1}.
\end{align}
Since $c \lb n, 0 \rb=n \in \partial \nabla_w$, we have
\begin{align}
\left| \lb \min_{m \in A \setminus \lc w \rc} \mu_m - \mu_w \rb \lb c(n, s_0) \rb \right|
&=\left| \lb \min_{m \in A \setminus \lc w \rc} \mu_m - \mu_w \rb 
\lb n+\lb c(n, s_0)-c(n, 0)\rb\rb \right| \\
& \leq C_4 (-\log t)^{-1} \\
& \leq \kappa
\end{align}
when $t >0$ is sufficiently small.
Here $C_4 >0$ is also some constant.
Hence, one has $c \lb n, s_0 \rb \in N_{\kappa}^w$ for any $s_0 \in \ld 0, s(n)+\varepsilon_1 \rb$.
This contradicts the original assumption on $s(n)$.
We conclude that $s(n)=1+\varepsilon$ and the solution $c(n, s) \in N_{\kappa}^w$ exists on the interval $\ld 0, 1+\varepsilon \rd$.

By \eqref{eq:ggxi} and \eqref{eq:cc} for $s_0=1$, we obtain the conditions (1) and (2) of \pref{lm:delta1} respectively.
We conclude \pref{lm:delta1}.
\end{proof}

We set
\begin{align}
R_t&:=\lc n \in N_\bR \relmid \frac{1}{2} \leq \tilde{f}_t^w \lb i_t(n) \rb \leq \frac{3}{2} \rc\\ \label{eq:S_t}
S_t&:=
\lc (n, n') \in R_t \times N_\bR
\relmid 
\la k-w, n'\ra=-\frac{1}{2 \pi} \arg \lb -\frac{c_k}{c_w} \rb, \forall k \in K_\kappa^n \rc.
\end{align}
One has $R_t \subset N_\kappa^w$ for sufficiently small $t>0$ (\cite[Lemma 3.0.4]{MR4782805}).

\begin{lemma}
\label{lm:delta2}
For sufficiently small $t >0$, there exists a smooth map 
$\delta_{2, t} \colon S_t \to N_\bR \times N_\bR$ 
satisfying the following conditions:
\begin{enumerate}
\item For any $(n, n') \in S_t$, one has
\begin{align}
f_t^w \lb i_t \circ j_t \lb (n, n')+\delta_{2, t}(n, n') \rb \rb
=
\tilde{f}_t^w \lb i_t(n) \rb.
\end{align}
\item For any $(n, n') \in S_t$ and $e^\ast \in \lc n'' \in N \relmid \la k-w, n'' \ra=0, \forall k \in K_\kappa^n \rc$, one has $(n, n'+ e^\ast) \in S_t$ and
\begin{align}
\delta_{2, t}(n, n')
=\delta_{2, t} \lb n, n'+ e^\ast \rb.
\end{align}
\item $\left| \left| \delta_{2, t} \right| \right|_{C^0} = O(t^{\kappa})$, where $|| \bullet ||_{C^0}$ denotes the $C^0$-norm over $S_t$.
\end{enumerate}
\end{lemma}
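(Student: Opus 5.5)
The plan is to compare $f_t^w$ with $\tilde{f}_t^w$ on $S_t$ and solve for the correction by a flow argument, parallel to the proof of \pref{lm:delta1}; this follows \cite[Proposition 5.5]{MR4194298} and the corresponding statement in \cite{MR4782805}.

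First I would isolate the dominant terms. For $(n,n') \in S_t$ put $z=i_t\circ j_t(n,n')$. Then each monomial of $f_t^w$ is
\begin{align}
-\frac{k_{m,t}}{k_{w,t}} z^{m-w}
=-\frac{c_m}{c_w}\, t^{(\mu_m-\mu_w)(n)}\, e^{2\pi\sqrt{-1}\la m-w,n'\ra}\,(1+O(t^{\epsilon'})),
\end{align}
where $\epsilon'>0$ comes from the convergent higher-order terms of the Puiseux series $k_m$. Take $k\in K_\kappa^n$. The defining condition of $S_t$, $\la k-w,n'\ra=-\frac{1}{2\pi}\arg(-c_k/c_w)$, makes the phase cancel exactly, so the $k$-th term equals $\frac{r_k}{r_w}t^{(\mu_k-\mu_w)(n)}(1+O(t^{\epsilon'}))$. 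This is the $k$-th term of $\tilde f_t^w(i_t(n))$ up to a relative error. Now take $m\notin K_\kappa^n$. Then $(\mu_m-\mu_w)(n)>\kappa$, so both the $m$-th term of $f_t^w$ and that of $\tilde f_t^w$ are $O(t^\kappa)$. Since $n\in R_t\subset N_\kappa^w$, the sum is bounded, and so $|f_t^w(z)-\tilde f_t^w(i_t(n))|=O(t^{\kappa})$ after shrinking $\kappa$ below $\epsilon'$ if needed.

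Next I would deform $z$ holomorphically to fix the discrepancy. Let $F(y):=f_t^w(i_t\circ j_t((n,n')+y))$ for $y\in N_\bR\times N_\bR$, viewed through the complex coordinate $N_\bC$. I would solve $\frac{d}{ds}y=(\tilde f_t^w(i_t(n))-F(0))\cdot \overline{\partial F}/|\partial F|^2$, the complex analogue of \eqref{eq:de}. Along this flow $F$ moves linearly from $F(0)$ to $\tilde f_t^w(i_t(n))$, and condition (1) is the value at time $1$. To bound the speed I need $|\partial F|\gtrsim(-\log t)$ near the point. This is the holomorphic version of \pref{lm:grad}, obtained from \cite[Lemma 3.0.7]{MR4782805}. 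Its input is that the dominant terms are positive real, by the phase alignment, and that their sum lies in $[1/2,3/2]$ by the definition of $R_t$. Consequently the displacement is $O(t^\kappa)/(-\log t)=O(t^\kappa)$, which gives (3). The same a priori bound keeps the solution inside the region where these estimates hold, via the same supremum argument as in \pref{lm:delta1}. Smoothness in $(n,n')$ follows from smooth dependence of ODE solutions on initial data.

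For (2), translate $n'$ by $e^\ast\in N$ orthogonal to all $k-w$ with $k\in K_\kappa^n$. This preserves the constraints defining $S_t$. The point $z$ also changes only through $e^{2\pi\sqrt{-1}\la m-w,e^\ast\ra}$, and since $e^\ast\in N$ and $m-w\in M$ this factor equals $1$. So $z$, the function $F$ and the ODE are literally unchanged, and hence $\delta_{2,t}$ is unchanged.

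I expect the main obstacle to be the uniform lower bound on $|\partial F|$ along the whole flow path when the base point $n$ lies near a boundary of a region where $K_\kappa^n$ changes. Non-dominant terms with uncontrolled phases could interfere there. I would handle this by using that those terms are $O(t^\kappa)$ in both value and derivative, so they perturb the gradient only by $O((-\log t)t^\kappa)$.
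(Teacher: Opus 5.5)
Your proposal is correct and follows essentially the same route as the paper. Both arguments use the bound $|\xi_t^w|=O(t^\kappa)$ coming from the phase alignment built into $S_t$, the holomorphic gradient flow $\frac{d}{ds}c=\xi_t^w\cdot \grad g_t^w/|\grad g_t^w|^2$ controlled by \cite[Lemma 3.0.7]{MR4782805} together with the supremum argument of \pref{lm:delta1}, and, for (2), the invariance of $i_t\circ j_t$ under $n'\mapsto n'+e^\ast$ with $e^\ast\in N$.

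One point should be stated more carefully. Keep the factor $(-\log t)^{-1}$ in the $N_\bC$-displacement bound rather than writing $O(t^\kappa)/(-\log t)=O(t^\kappa)$. The reason is that $j_t^{-1}$ multiplies the imaginary part by $\log t/2\pi$. It is exactly this factor that makes the $n'$-component of $\delta_{2,t}$ equal to $O(t^\kappa)$ rather than $O(t^\kappa\log(1/t))$.
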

\begin{proof}
We consider the holomorphic function
\begin{align}
    g_t^w \colon N_\bC \to \bC, 
    \quad n \mapsto f_t^w(i_t(n))
    =\sum_{m \in A \setminus \lc w \rc} \lb - \frac{k_{m, t}}{k_{w, t}} \rb t^{\la m-w, n \ra},
\end{align}
and let
\begin{align}
        \grad g_t^w
        =\lb \overline{\frac{\partial g_t^w}{\partial n_0}}, \cdots, \overline{\frac{\partial g_t^w}{\partial n_d}} \rb
\end{align}
denote the gradient vector field on $N_\bC$, where $(n_0, \cdots, n_d)$ are complex coordinates on $N_\bC \cong \bC^{d+1}$.
We define a function $\xi_t^w \colon S_t \to \bC$ by
\begin{align}\label{eq:df-xi}
\xi_t^w \lb n, n' \rb
:=\tilde{f}_t^w \lb i_t \lb n \rb \rb
-f_t^w \lb i_t \circ j_t (n, n') \rb,
\end{align}
and consider the differential equation for an unknown function 
    $c \colon S_t \times [0, 1+\varepsilon] \to N_\bC$
\begin{align}\label{eq:de'}
        \frac{d}{ds} c \lb n, n', s \rb
        = \xi_t^w \lb n, n' \rb \cdot 
        \frac{\grad g_t^w}{\left| \grad g_t^w \right|^2} \lb c \lb n, n', s \rb \rb
\end{align}
with the initial condition $c \lb n, n', 0 \rb=j_t \lb n, n' \rb \in N_\bC$, where $\varepsilon>0$ is a small real constant.
\pref{lm:delta2} is proved by showing that there exists a global solution $c \lb n, n', s \rb$ for \eqref{eq:de'} contained in 
\begin{align}
N_{2\kappa, \bC}^w:=\lc n \in N_\bC \relmid \Re \lb n \rb \in N_{2\kappa}^w \rc,
\end{align}
where $N_{2\kappa}^w$ is \eqref{eq:nkw} with $\kappa$ replaced with $2 \kappa$, and that the map $\delta_{2, t} \colon S_t \to N_\bR \times N_\bR$ defined by
\begin{align}
\delta_{2, t} (n, n'):=j_t^{-1} \lb c(n, n', 1)-j_t \lb n, n' \rb \rb
\end{align}
satisfies the conditions (1)--(3) in \pref{lm:delta2}.
For doing this, we will use the following lemma:

\begin{lemma}{\rm(cf.~\cite[Lemma 5.2]{MR4194298}, \cite[Lemma 3.0.6]{MR4782805})}\label{lm:diff}
There is some constant $C >0$ such that for any $(n, n') \in S_t$, one has
\begin{align}
\left| \xi_t^w (n, n') \right| \leq C \cdot t^\kappa.
\end{align}
\end{lemma}
\begin{proof}
By the definition \eqref{eq:S_t} of $S_t$ and \cite[Lemma 3.0.2(3)]{MR4782805}, we have
\begin{align}\label{eq:argument-phase}
f_t^w \lb i_t \circ j_t (n, n') \rb
=\sum_{m \in A \setminus \lc w \rc} \lb - \frac{k_{m, t}}{k_{w, t}} \rb 
t^{\la m-w, n+\frac{2\pi \sqrt{-1}}{\log t}n' \ra}
=O \lb t^\kappa \rb+\sum_{k \in K_\kappa^n} \frac{r_k}{r_w} \lb 1+O(t^{2\kappa}) \rb t^{\lb \mu_k-\mu_w\rb(n)}.
\end{align}
One also has
\begin{align}
\tilde{f}_t^w \lb i_t \lb n \rb \rb
=O \lb t^\kappa \rb+\sum_{k \in K_\kappa^n} \frac{r_k}{r_w} t^{\lb \mu_k-\mu_w\rb(n)}.
\end{align}
By combining these, we get
\begin{align}\label{eq:otk}
\xi_t^w \lb n, n' \rb=O \lb t^\kappa \rb
+\sum_{k \in K_\kappa^n} O(t^{2\kappa}) \frac{r_k}{r_w} t^{\lb \mu_k-\mu_w\rb(n)}.
\end{align}
Since $\min_{m \in A \setminus \lc w \rc} \mu_m(n) -\mu_w (n) \geq -\kappa$ for $n \in R_t \subset N_\kappa^w$, \eqref{eq:otk} is $O \lb t^\kappa \rb$. 
Thus we obtain the claim.
\end{proof}

One can show the existence of a solution of \eqref{eq:de'} and the conditions (1), (3) in \pref{lm:delta2} in the same way as in \cite[Proposition 5.3]{MR4194298}, \cite[Proposition 3.0.5]{MR4782805}, and \pref{lm:delta1} above, by using \pref{lm:diff} and \cite[Lemma 3.0.7]{MR4782805}.
We omit the details.

We show that the condition (2) is also satisfied.
Let $(n, n') \in S_t$ and $e^\ast$ be elements in the condition (2).
It is clear from the definition \eqref{eq:S_t} of $S_t$ that one has $(n, n'+ e^\ast) \in S_t$.
We can also easily check
\begin{align}
\xi_t^w \lb n, n'+e^\ast \rb=\xi_t^w \lb n, n' \rb, \quad
g_t^w \lb n+\frac{2 \pi \sqrt{-1}}{\log t}e^\ast\rb=g_t^w \lb n\rb.
\end{align}
One can see from these that the solution $c(n, n'+e^\ast, s)$ of the differential equation \eqref{eq:de'} for $(n, n'+e^\ast) \in S_t$ is given by 
$c(n, n', s)+\frac{2 \pi \sqrt{-1}}{\log t}e^\ast$, where $c(n, n', s)$ is the solution of \eqref{eq:de'} for $(n, n') \in S_t$.
Thus we obtain
\begin{align}
    \delta_{2, t} (n, n'+e^\ast)
    &=j_t^{-1} \lb c(n, n'+e^\ast, 1)-j_t \lb n, n'+e^\ast \rb \rb\\
    &=j_t^{-1} \lb c(n, n', 1)+\frac{2 \pi \sqrt{-1}}{\log t}e^\ast-j_t \lb n, n'\rb-\frac{2 \pi \sqrt{-1}}{\log t}e^\ast \rb\\
    &=\delta_{2, t} (n, n').
\end{align}
We conclude \pref{lm:delta2}.
\end{proof}

For any point $n \in N_\bR$, we define 
$T_n \subset \left. N_\bR \middle/ N \right.$ 
to be the image of the affine subspace
\begin{align}\label{eq:linear-sub}
    \lc n' \in N_\bR \relmid 
    \la k-w, n'\ra=-\frac{1}{2 \pi} \arg \lb -\frac{c_k}{c_w} \rb, \forall k \in K_\kappa^n \rc 
    \subset N_\bR
\end{align}
by the natural projection $N_\bR \to \left. N_\bR \middle/ N \right.$.
This is a subtorus of $\left. N_\bR \middle/ N \right.$.
We also set
\begin{align}
U_t:=
\lc (n, n') \in R_t \times \lb \left. N_\bR \middle/ N \right. \rb
\relmid 
n' \in T_n \rc.
\end{align}
By the condition (2) of \pref{lm:delta2}, the map $\delta_{2, t}$ induces
\begin{align}
    \delta_{2, t} \colon U_t \to N_\bR \times N_\bR.
\end{align}
By abuse of notation, we also write this induced map as $\delta_{2, t}$.
We define
\begin{align}
r_{1, t} &\colon \partial \nabla_w \times \lb \left. N_\bR \middle/ N \right. \rb 
    \to N_\bR \times \lb \left. N_\bR \middle/ N \right. \rb, \quad
    (n, n') \mapsto \lb n+\delta_{1, t} (n), n' \rb\\
r_{2, t} &\colon U_t \to N_\bR \times \lb \left. N_\bR \middle/ N \right. \rb, \quad (n, n') \mapsto \lb n, n' \rb+\delta_{2, t} \lb n, n'\rb.
\end{align}
\begin{lemma}\label{lm:image-d1}
The images of
    \begin{enumerate}
    \item $C(\scE):=\Phi(\tilde{c}(\scE)) \subset \partial \nabla_w^{d-k} \times (\left. N_\bR \middle/ N \right.)$,
    \item $\Phi \lb \Delta_\scS \times c_{\scS} \rb 
    \subset \partial \nabla_w \times (\left. N_\bR \middle/ N \right.)$ ($\Delta_\scS \times c_{\scS}$ is the one that appeared in \eqref{eq:t-c(scE)}.), and
    \item $\Phi \lb c(P) \rb \subset \partial \nabla_w \times (\left. N_\bR \middle/ N \right.)$ ($c(P)$ is \eqref{eq:c(P)}, and we regard it as a subset of $\partial \nabla_w \times (\left. N_\bR \middle/ N \right.)$ by the projection $N_\bR \times N_\bR \to N_\bR \times \lb \left. N_\bR \middle/ N \right. \rb$.)
\end{enumerate}
by the map $r_{1, t}$ are all contained in $U_t$.
\end{lemma}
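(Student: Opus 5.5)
The plan is to check directly that a point $(n,n'+\phi(n))$, with $(n,n')$ in one of the three chains, is sent by $r_{1,t}$ to a point satisfying the two conditions defining $U_t$. Put $\tilde n := n+\delta_{1,t}(n)$. The first condition is $\tilde n \in R_t$. This is immediate from \pref{lm:delta1}(1), which gives $\tilde f_t^w(i_t(\tilde n))=1\in[\frac12,\frac32]$. So the whole content is the second condition, $n'+\phi(n)\in T_{\tilde n}$. Concretely, we must show
\begin{align}
\la k-w, n'+\phi(n) \ra \equiv -\frac{1}{2\pi}\arg\lb -\frac{c_k}{c_w}\rb \mod \bZ \quad \text{for all } k\in K_\kappa^{\tilde n}.
\end{align}

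The first step is a structural statement about the fibres. In all three cases, the part of the chain lying over a point $n\in\Delta_\scS$ has its $\lb N_\bR/N\rb$-component in a translate of the subtorus $\sigma[\scS]^\perp/(\sigma[\scS]^\perp\cap N)$, and this translate pairs integrally with every $e\in\sigma[\scS]\cap M$. The three cases are checked as follows.
\begin{itemize}
\item For $c(P)$, the fibre is the face $P_\scS$. By \eqref{eq:face-S} and \eqref{eq:P-plane}, every point $p\in P_\scS$ satisfies $\la e,p\ra=\varphi_P(e)$ for $e\in\mu(\sigma[\scS])$. Since $P$ is a lattice polytope, this value is an integer for $e\in M$.
\item For $\Delta_\scS\times c_\scS$, \pref{lm:cscds}(1) puts $c_\scS$ inside the subtorus itself.
\item For $C(\scE)$, \pref{lm:t-c(scE)}(2) gives the same conclusion for $\tilde c(\scE)$.
\end{itemize}
Hence it suffices to show two things: first, $K_\kappa^{\tilde n}\subset\lc k : k-w\in\sigma[\scS]\rc$; second, $K_\kappa^{\tilde n}\subset K_{2\kappa}^n$. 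Once both hold, $\la k-w,n'\ra\in\bZ$ for the relevant $k$, and the phase-shift identity \eqref{eq:phase-shift} (valid since $n\in\partial\nabla_w\subset N_\kappa^w$) gives exactly the required value for $\la k-w,\phi(n)\ra$.

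The second step is a combinatorial description of which faces of $\nabla_w$ the simplex $\Delta_\scS$ meets. Each $b_{\sigma_i}$ lies in the relative interior of the face dual to $\sigma_i$. A point of $\Delta_\scS=\conv\lc b_{\sigma_1},\dots,b_{\sigma_{q+1}}\rc$ lies in the closed face dual to a cone $\rho$ only if every vertex with positive weight does. Therefore the faces met by $\Delta_\scS$ are exactly those dual to cones $\rho\preceq\sigma_i$ for some $i$, and all such $\rho$ satisfy $\rho\subset\sigma[\scS]$. Consequently, for $n\in\Delta_\scS$, every $k$ with $\mu_k(n)=\mu_w(n)$ has $k-w$ a generator of $\sigma[\scS]$.

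The third step, which I expect to be the main obstacle, is to upgrade this exact statement to the $\kappa$-thickened one at the perturbed point $\tilde n$. Take $k$ with $k-w\notin\sigma[\scS]$. Then $\mu_k-\mu_w$ is strictly positive on the compact set $\Delta_\scS$, and there are finitely many pairs $(\scS,k)$. So there is a uniform $\eta>0$ with $\mu_k-\mu_w\ge\eta$ on $\Delta_\scS$ for all such pairs. By \pref{lm:delta1}(2), $|\tilde n-n|=O((-\log t)^{-1})$, so $\mu_k(\tilde n)-\mu_w(\tilde n)\ge\eta/2$ for small $t$. If $\kappa<\eta/2$, such $k$ are excluded from $K_\kappa^{\tilde n}$.

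Since $\kappa$ was fixed earlier via \cite[Lemma 3.0.2]{MR4782805}, I would first check that this lemma survives shrinking $\kappa$. Failing that, I would exploit the freedom in choosing the points $b_\sigma$, taking them deep enough in their faces that $\eta>2\kappa$.

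The inclusion $K_\kappa^{\tilde n}\subset K_{2\kappa}^n$ follows from the same Lipschitz estimate once $t$ is small. Combining the three steps for each of the three chains gives the lemma.
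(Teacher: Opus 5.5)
Your proposal is correct and follows the paper's proof. The first component lies in $R_t$ by \pref{lm:delta1}(1). The fibres lie in (lattice translates of) $\sigma[\scS]^\perp/(\sigma[\scS]^\perp\cap N)$. Smallness of $\delta_{1,t}$ and $\kappa$ forces $k-w$ to be a ray generator of $\sigma[\scS]$ for every $k\in K_\kappa^{n+\delta_{1,t}(n)}$, and also gives $K_\kappa^{n+\delta_{1,t}(n)}\subset K_{2\kappa}^n$, so \eqref{eq:phase-shift} finishes. The paper handles $\kappa$ exactly as your primary option does, by taking it sufficiently small. Your fallback of moving the $b_\sigma$ would not work in general, because no point of a face of $\nabla_w$ that is small relative to $\kappa$ is far from the neighbouring faces. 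Your face-incidence and compactness/Lipschitz arguments, and your lattice-translate remark for $P_\scS$, make explicit what the paper states in one line each.
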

\begin{proof}
We first prove the claim for $\Phi \lb c(P)\rb$ of (3).
The singular chain $c(P)$ is a sum of $\Delta_\scS \times P_{\scS}$.
Let $(n, n') \in \Delta_\scS \times P_{\scS} \subset c(P)$ be an arbitrary point.
We will prove $r_{1, t} \lb \Phi (n , n')\rb \in U_t$.
It is obvious from \pref{lm:delta1}(1) that the first component $n+\delta_{1, t}(n)$ of $r_{1, t} \lb \Phi (n , n')\rb$ is contained in $R_t$.
We will check that the second component $n'+\phi(n)$ of $r_{1, t} \lb \varphi (n , n')\rb$ is contained in $T_{n+\delta_{1, t}(n)}$.
It suffices to show
\begin{align}\label{eq:component2}
    \la k-w, n'+\phi(n)\ra=-\frac{1}{2 \pi} \arg \lb -\frac{c_k}{c_w} \rb 
\end{align}
for all $k \in K_\kappa^{n+\delta_{1, t}(n)}$.

The simplex $\Delta_\scS$ is contained in the open star of the face of $\nabla_w$ which is dual to the cone $\sigma[\scS] \in \Sigma_w$.
In the open star, the tropical monomial $(\val(k_w)+\la w, \bullet \ra)$ in $\trop(f)$ corresponding to $w (\in A)$ attains the minimum of $\trop(f)$, and the other tropical monomials that may attain the minimum are only the ones corresponding to the elements in
\begin{align}
    M_\scS:=\lc m \in A_w \relmid \bR_{\geq0} \cdot (m-w) \prec \sigma[\scS] \rc.
\end{align}
Since $\delta_{1, t}$ is small (\pref{lm:delta1}(2)), one has $K_\kappa^{n+\delta_{1, t}(n)} \subset M_\scS$ when $\kappa$ is sufficiently small.
Since we have
\begin{align}
    \sigma[\scS]^\perp 
    =\bigcap_{m \in M_\scS} (m-w)^\perp 
    \subset \bigcap_{k \in K_\kappa^{n+\delta_{1, t}(n)}} (k-w)^\perp
\end{align}
and $n' \in P_{\scS} \subset \left. \sigma[\scS]^\perp \middle/ \lb \sigma[\scS]^\perp \cap N \rb \right.$, we obtain 
\begin{align}\label{eq:kw1}
    \la k-w, n'\ra=0
\end{align}
for all $k \in K_\kappa^{n+\delta_{1, t}(n)}$.
Since $\delta_{1, t}$ is small (\pref{lm:delta1}(2)), we also have $K_\kappa^{n+\delta_{1, t}(n)} \subset K_{2\kappa}^{n}$.
By this and \eqref{eq:phase-shift}, we obtain
\begin{align}\label{eq:kw2}
    \la k-w, \phi(n)\ra=-\frac{1}{2 \pi} \arg \lb -\frac{c_k}{c_w} \rb
\end{align}
for all $k \in K_\kappa^{n+\delta_{1, t}(n)}$.
By \eqref{eq:kw1} and \eqref{eq:kw2}, we obtain \eqref{eq:component2} and conclude $r_{1, t} \lb \Phi (c(P)) \rb \subset U_t$.

The claim for $\Phi \lb \Delta_\scS \times c_{\scS}\rb$ of (2) can be checked similarly.
The singular chain $c_{\scS}$ is also contained in $\left. \sigma[\scS]^\perp \middle/ \lb \sigma[\scS]^\perp \cap N \rb \right.$ by \pref{lm:cscds}(1).
By using this, we can obtain 
$r_{1, t} \lb \Phi \lb \Delta_\scS \times c_{\scS}\rb \rb \subset U_t$ 
by the same argument as that for $r_{1, t} \lb \Phi (c(P)) \rb \subset U_t$.

Lastly, we prove the claim for (1), i.e., $r_{1, t} \lb C(\scE)\rb \subset U_t$.
We have
\begin{align}
\partial \nabla_w^{d-k}
=\bigcup_{\scS \in \scrS_{d-k}^w} 
\Delta_\scS
=
\bigcup_{q=0}^{d-k}
\bigcup_{\scS \in \scrS_{q}^w} 
\mathring{\Delta}_\scS.
\end{align}
For any point $n \in \mathring{\Delta}_\scS$ 
($\scS \in \scrS_{q}^w$, $0 \leq q \leq d-k$), 
we have $K_\kappa^{n+\delta_{1, t}(n)} \subset M_\scS$ again by the same reason as above.
By this and \pref{lm:t-c(scE)}(2), we can obtain 
$r_{1, t} \lb C(\scE)\rb \subset U_t$ by the same argument as the previous ones for $r_{1, t} \lb \Phi \lb c(P)\rb \rb$ and 
$r_{1, t} \lb \Phi \lb \Delta_\scS \times c_{\scS}\rb \rb$.
\end{proof}

We show that the image of $C(\scE)$ by the map $r_{2, t} \circ r_{1, t}$ gives the small perturbation of $C(\scE)$ in \pref{th:main2}(2-c).
The image $(r_{2, t} \circ r_{1, t})(C(\scE))$ is well-defined by \pref{lm:image-d1}(1).
By \pref{lm:delta2}(1) and \pref{lm:delta1}(1), we have
\begin{align}\label{eq:in-hypersurface}
f_t^w \lb h_t \lb r_{2, t} \circ r_{1, t} \lb n, n' \rb \rb \rb
=
f_t^w \lb h_t \lb (n+\delta_{1, t}(n), n')+\delta_{2, t}(n+\delta_{1, t}(n), n') \rb \rb
=
\tilde{f}_t^w \lb i_t(n+\delta_{1, t}(n)) \rb=1
\end{align}
for all $\lb n, n' \rb \in C(\scE)$.
Therefore, the image by the map $h_t$ of $(r_{2, t} \circ r_{1, t})(C(\scE))$ is contained in the hypersurface $\mathring{Z}_t$.
For each element $(n, n') \in C(\scE)$,
\begin{align}\label{eq:added}
    (\delta_{1, t}(n), 0)+\delta_{2, t} \lb n+\delta_{1, t}(n), n'\rb
\end{align}
is added by the map $r_{2, t} \circ r_{1, t}$.
By \pref{lm:delta1}(2) and \pref{lm:delta2}(3), we can see that the $C^0$-norm of \eqref{eq:added} is $O \lb(-\log t)^{-1} \rb$.
The group homomorphism $\Psi_t^w$ of \eqref{eq:Psi} will be defined in \eqref{eq:Psi'} (\pref{sc:proof-main2}) so that it is obvious that the image by the map $h_t$ of $(r_{2, t} \circ r_{1, t})(C(\scE))$ represents the homology class $\Psi_t^w (\scE) \in H_d ( \mathring{Z}_t, \bZ)$.

\subsection{Homology classes of lifts}\label{sc:homology}

Let $P \subset N_\bR$ be a lattice polytope of \pref{sc:step1}.
We define $C(P)_t \subset N_{\bC^\ast}$ to be the image of $\Phi(c(P))$ by the map $h_t \circ r_{2, t} \circ r_{1, t}$.
This is also well-defined by \pref{lm:image-d1}(3), and is contained in the hypersurface $\mathring{Z}_t$ by the same computation as \eqref{eq:in-hypersurface}.
Furthermore, we can see that $C(P)_t$ is a cycle by \pref{lm:c(P)-boundary}. 

\begin{construction}\label{transport}
Let $\tilde{Z}_t^w$ be the complex hypersurface defined by the polynomial $\tilde{f}_t^w$ of \eqref{eq:tft} in the toric variety $Y_\Sigma$.
The hypersurfaces $Z_t$ and $\tilde{Z}_t^w$ are members of the family of complex hypersurfaces defined by the second projection
\begin{align}\label{eq:ch-family}
\lc \lb z, \lc a_m \rc_{m \in A \setminus \lc w \rc} \rb \in Y_\Sigma \times \lb \bC^\ast \rb^{A \setminus \lc w \rc} \relmid \sum_{m \in A \setminus \lc w \rc} a_m z^{m-w}=1 \rc 
\to \lb \bC^\ast \rb^{A \setminus \lc w \rc}.
\end{align}
The positive real locus $\tilde{Z}_t^w \cap N_{\bR_{>0}} \subset \tilde{Z}_t^w$ is homeomorphic to a $d$-sphere (cf.~\cite[Section 6.7]{MR2079993}).
We choose the branch of the argument $\arg \lb -k_{m, t}/k_{w, t} \rb$ of every coefficient in the polynomial $f_t^w$ so that
\begin{align}\label{eq:arg-arg}
    \arg \lb -k_{m, t}/k_{w, t} \rb \sim 
    \left\{ 
    \begin{array}{ll}
    \arg \lb - c_m/c_w \rb-2\pi \cdot \varphi_P(m-w) & m \in A_w\\
    \arg \lb - c_m/c_w \rb & m \in A \setminus (A_w \cup \lc w \rc)
    \end{array} 
    \right.
    \quad (t \to +0),
\end{align}
where the branch of $\arg \lb - c_m/c_w \rb$ is the one we chose for constructing the phase-shifting function $\phi$ in \pref{eq:phase-shift}, 
and $\varphi_P$ is the support function \eqref{eq:varphi} for the lattice polytope $P$.
We consider transporting the positive real locus $\tilde{Z}_t^w \cap N_{\bR_{>0}} \subset \tilde{Z}_t^w$ into $Z_t$ inside \eqref{eq:ch-family} by varying the complex coefficients of the polynomial defining the hypersurface from $\tilde{f}_t^w$ to $f_t^w$ 
so that the argument $\arg \lb r_m/r_w \rb=0$ of every coefficient in $\tilde{f}_t^w$ changes to the argument $\arg \lb -k_{m, t}/k_{w, t} \rb$ of the corresponding coefficient in $f_t^w$ continuously.
\end{construction}

\begin{proposition}\label{pr:C(P)-transport}
    The cycle $C(P)_t$ is the same as the cycle obtained by the transportation of Construction \ref{transport}.
\end{proposition}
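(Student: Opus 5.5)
We plan to compare the two cycles through a homotopy in the coefficient space of \eqref{eq:ch-family}, tracking the construction of $C(P)_t$ at every stage. First we describe the starting cycle. For real positive coefficients $r_m/r_w\, t^{\lambda_m-\lambda_w}$, the positive real locus $\tilde{Z}_t^w\cap N_{\bR_{>0}}$ equals $i_t(\partial\nabla_w+\delta_{1,t}(\partial\nabla_w))$ by \pref{lm:delta1}(1). Indeed, $i_t\circ(\mathrm{id}+\delta_{1,t})$ maps the sphere $\partial\nabla_w$ injectively onto the positive real locus: $\tilde g_t^w$ is strictly monotone along the flow lines of \eqref{eq:de}, and the locus is itself a $d$-sphere. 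In $N_\bR\times(N_\bR/N)$ coordinates this is $\{(n+\delta_{1,t}(n),0)\}$, i.e. the chain $c(P_0)$ for the one-point polytope $P_0=\{0\}$. The reason is that for $P_0$ only the top strata $q=d$ contribute, and $\sum_{\scS\in\scrS_d^w}\Delta_\scS$ is the fundamental cycle of $\partial\nabla_w$.

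Next we deform the arguments. We introduce a one-parameter family of phase-shifting data: for $s\in[0,1]$ we let the target arguments be $s\cdot(\text{right-hand side of }\eqref{eq:arg-arg})$. For each $s$ we repeat the constructions of \pref{lm:delta1}, \pref{lm:delta2}, \pref{lm:image-d1} with a phase-shifting function $\phi_s$ and a fiber torus $T_n^s$ defined by these arguments, and obtain cycles in the hypersurfaces of the family. The key point is that \pref{lm:delta2} and \pref{lm:diff} only use the estimate \eqref{eq:argument-phase}. That estimate holds uniformly in $s$, because the relevant quantities are the arguments modulo $2\pi$ of the dominant monomials $k\in K^n_\kappa$. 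Hence the perturbed cycles vary continuously in $s$, and the transported cycle at $s=1$ is homologous to the corresponding perturbation of the $s=0$ sphere moved by the continuous fiber translation.

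It remains to identify that endpoint with $\Phi(c(P))$. Over a point $n\in\mathring\Delta_\scS$, the linear shift $-\varphi_P(m-w)$ in the argument of $m\in M_\scS$ is realized as follows. On $\sigma[\scS]$ the function $\varphi_P$ is linear, so $\varphi_P(m-w)=\la m-w,v_\scS\ra$ for a vertex $v_\scS$ of $P$. The fiber point over $n$ is therefore translated from $0$ to $v_\scS$ inside $T_n$. Different simplices $\Delta_\scS$ and $\Delta_{\scS'}$ meeting at lower strata give different vertices, and the vertices must be interpolated. Over $\mathring\Delta_\scS$ with $\dim P_\scS=d-q$, the set of fiber translations compatible with the dominant constraints is exactly $P_\scS$, sitting in $\sigma[\scS]^\perp$. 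The continuous family therefore sweeps out $\Delta_\scS\times P_\scS$, and the sign $(-1)^q$ with the orientation from \eqref{eq:fd-q} matches the induced orientation of the product. Here we would use \pref{lm:b-delta} and \pref{lm:c(P)-boundary} to check that the swept chain is a cycle with exactly the coefficients of \eqref{eq:c(P)}.

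The main obstacle is this last identification. One must show rigorously that the trace of the sphere under the phase monodromy is $\Phi(c(P))$ as a chain, not merely up to homology, or at least that the two are homologous in $\mathring{Z}_t$. Our first approach is an explicit isotopy. Choose $\phi$ in the construction of $C(P)_t$ piecewise-linearly, so that the shift over $b_\sigma$ sweeps the face $P_\sigma$. Then verify stratum by stratum that the homotopy $s\mapsto$ (dilation of $P$ by $s$) keeps the chain inside $U_t$, so that $h_t\circ r_{2,t}\circ r_{1,t}$ carries the homotopy into the family \eqref{eq:ch-family}.
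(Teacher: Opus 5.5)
Your base case (the positive real locus is the image of $c(\{0\})$) and your relation $\varphi_P(m-w)=\langle m-w,v\rangle$ for $v\in P_\scS$ are the same ingredients the paper uses (cf. \eqref{eq:nDs}). The gap is in the step that carries all the content, namely identifying the endpoint of your deformation with $\Phi(c(P))$; this does not work as written, and you leave it open.

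The endpoint of your family of smooth phase-shifting functions $\phi_s$ is the image of the graph $\{(n,\phi_1(n))\}$ of a single-valued section over $\partial\nabla_w$. By contrast, $\Phi(c(P))$ has $(d-q)$-dimensional fibres $P_\scS$ over $\mathring{\Delta}_\scS$. These are different chains, so you need an actual homology between them inside $\mathring{Z}_t$, with every intermediate chain staying where $r_{1,t}$ lands in $U_t$.

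Your justification for this identification fails on two counts. First, the fibre translations compatible with the dominant constraints $\langle k-w,p\rangle=\varphi_P(k-w)$, $k\in K^n_\kappa$, form an affine subspace of dimension at least $d-q$ (a translated subtorus mod $N$) containing $P_\scS$; they do not form the bounded polytope $P_\scS$ itself. Second, your $s$-family only moves the single fibre point over each $n$ along a path from $\phi_0(n)$ to $\phi_1(n)$. It therefore cannot ``sweep out'' $\Delta_\scS\times P_\scS$; its trace is $(d+1)$-dimensional. (Also, the fundamental cycle of $\partial\nabla_w$ is $\sum_{\scS\in\scrS_d^w}(-1)^d f(\scS)\,\Delta_\scS$, not $\sum_\scS\Delta_\scS$.)

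Your dilation fallback also fails as stated. For $n'\in sP_\scS$ and dominant $k$ one has $\langle k-w,n'\rangle=s\,\varphi_P(k-w)$, which is not an integer for $0<s<1$, so $r_{1,t}(\Phi(c(sP)))$ is \emph{not} contained in $U_t$.

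Once corrected, however, this is exactly the missing idea, and it makes the smooth $\phi_s$-family unnecessary for the $\varphi_P$-part. The chain $r_{1,t}(\Phi(c(sP)))$ lies in the analogue of $U_t$ for the hypersurface $Z_t^{(s)}$, defined by $f_t^w$ with the coefficient of $z^{m-w}$ multiplied by $e^{-2\pi\sqrt{-1}s\varphi_P(m-w)}$ for $m\in A_w$. \pref{lm:diff} and \pref{lm:delta2} hold for $Z_t^{(s)}$ uniformly in $s$, because on the relevant set the phases of the dominant terms still cancel. Hence $s\mapsto h_t\circ r_{2,t}^{(s)}\circ r_{1,t}(\Phi(c(sP)))$ is a continuous family of cycles in the fibres of \eqref{eq:ch-family} over a loop, which closes since $\varphi_P(M)\subset\bZ$. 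It runs from $C(\{0\})_t$ at $s=0$, where the lower-strata pieces degenerate, to $C(P)_t$ at $s=1$. Combined with the case $P=\{0\}$ (which your smooth deformation with target arguments $s\cdot\arg(-c_m/c_w)$ handles, as in the references), this realises precisely the extra windings $-2\pi\varphi_P(m-w)$ in \eqref{eq:arg-arg}.

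This is the continuous version of the paper's argument. The paper instead reads the top part of $c(P)$ as the pieces $\Delta_\sigma$ of \eqref{eq:delta-sigma} translated by the lattice vectors $n(P,\sigma)$ and glued by the lower-strata pieces (\pref{lm:c(P)-boundary}). It then uses \eqref{eq:nDs} to interpret this discrete phase shift as those windings.
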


\begin{proof}
 We give $\partial \nabla_w$ the orientation\footnote{This orientation coincides with that of $B_t^w (\cong \partial \nabla_w)$ \eqref{eq:Btw}, which we took for computation of period integrals in \cite{MR4782805}. See Remark 5.0.1 in loc.cit.} defined by the interior product of $\vol (N)$ with an incoming normal vector field on it.
 The cycle $c(P)$ \eqref{eq:c(P)} is written as
    \begin{align}\label{eq:c(P)'}
        c(P)=
        \sum_{\scS \in \scrS_d^w} (-1)^d \cdot \Delta_\scS \times P_{\scS}
        +\sum_{q=0}^{d-1} \sum_{\substack{\scS \in \scrS_q^w \\ \dim P_\scS=d-q}} (-1)^q \cdot \Delta_\scS \times P_{\scS}.
    \end{align}
    Regarding the former part of \eqref{eq:c(P)'}, we have $f(\scS) \in \lc \pm 1 \rc$ for $\scS \in \scrS_d^w$, since the fan $\Sigma_w$ is unimodular.
    We can easily check that the orientation of $\Delta_\scS \subset \partial \nabla_w$ $(\scS \in \scrS_d^w)$ multiplied by $(-1)^d \cdot f(\scS)$ coincides with that of $\partial \nabla_w$.
For every $\sigma \in \Sigma_w(d+1)$, we set 
     \begin{align}\label{eq:delta-sigma}
        \Delta_\sigma:=\sum_{\substack{\scS \in \scrS_d^w \\ \sigma[\scS]=\sigma}} 
          (-1)^d \cdot f(\scS) \cdot \Delta_\scS.
    \end{align}
    The polytope $P_{\scS}$ $(\scS \in \scrS_d^w)$ is, as a set, a single point which depends only on the cone $\sigma[\scS] \in \Sigma_w(d+1)$.
     We write the point as $n(P, \sigma) \in N_\bR$ with $\sigma=\sigma[\scS] \in \Sigma_w(d+1)$.
     The orientation of the point $P_{\scS}$ $(\scS \in \scrS_d^w)$ is positive when $f(\scS)=1$, and negative when $f(\scS)=-1$.
Therefore, the former part of \eqref{eq:c(P)'} can be written as
    \begin{align}\label{eq:c(P)1}
        \sum_{\sigma \in \Sigma_w(d+1)} 
 \Delta_\sigma \times n(P, \sigma) \subset N_\bR \times N_\bR.
        \end{align}
        \eqref{eq:c(P)1} can be regarded as the space obtained by translating each piece $\Delta_\sigma \times \lc 0\rc$ of the decomposition of $\partial \nabla_w\times \lc 0\rc$
        \begin{align}\label{eq:decomp}
        \partial \nabla_w \times \lc 0 \rc=
        \sum_{\sigma \in \Sigma_w(d+1)} \Delta_\sigma \times \lc 0 \rc 
        \subset N_\bR \times N_\bR
    \end{align}
    by the vector $n(P, \sigma) \in \lc 0\rc \times N_\bR$, and we can see from \pref{lm:c(P)-boundary} that the latter part of \eqref{eq:c(P)'} connect together these pieces $\Delta_\sigma \times n(P, \sigma)$ dispersed by the translations.

    Before considering $C(P)_t:=h_t \circ r_{2, t} \circ r_{1, t} \lb \Phi(c(P)) \rb$ for general $P \subset N_\bR$,
    let us consider the cycle $C(P)_t$ with $P=\lc 0 \rc$, i.e., $C(\lc 0 \rc)_t:=h_t \circ r_{2, t} \circ r_{1, t} \lb \Phi(\partial \nabla_w \times \lc 0 \rc) \rb$.
    It is the image of \eqref{eq:decomp} by $h_t \circ r_{2, t} \circ r_{1, t} \circ\Phi$.
    By \pref{lm:delta1}(1), the first component of 
    $r_{1, t} \lb \Phi (\partial \nabla_w  \times \lc 0 \rc) \rb
    \subset N_\bR \times N_\bR$ is contained in 
    \begin{align}\label{eq:Btw}
        B_t^w:=\lc n \in N_\bR \relmid \tilde{f}_t^w(i_t(n))=1 \rc.
    \end{align}
    The map $\Phi$ \eqref{eq:shift-map} corresponds to the map $\Phi_t$ in \cite[Section 5.2]{MR4194298} and $\Phi_t$ with $x=0$ in \cite[Section 3]{MR4782805}.
We can see that the set $r_{1, t} \lb \Phi (\partial \nabla_w  \times \lc 0 \rc) \rb$ corresponds to the set $\Phi_t(B_t)$ in \cite[Section 5.2]{MR4194298} and $\Phi_t(B_t^w \times \lc 0 \rc)$ in \cite[Section 3]{MR4782805}.
    Furthermore, the map $r_{2, t}$ corresponds to adding $\delta_t$ of \cite[Proposition 5.3]{MR4194298} and \cite[Proposition 3.0.5]{MR4782805}.
    From these, it is clear that the cycle $C(\lc 0 \rc)_t$ coincides with the cycle constructed in \cite[Section 5.2]{MR4194298} and \cite[Section 3]{MR4782805}. 
    The cycle constructed in \cite[Section 5.2]{MR4194298} and \cite[Section 3]{MR4782805} further coincides with the cycle obtained by the transportation in Construction \ref{transport} with $\varphi_P$ in \eqref{eq:arg-arg} replaced by the constant zero function.
    This is because the phase shift $\Phi$ corresponds to the changes of the arguments of the coefficients of leading terms of $f_t^w$ when transporting from $\tilde{Z}_t^w$ to $Z_t$.
    We can see this from the computation in \eqref{eq:argument-phase} and the condition \eqref{eq:phase-shift} for the phase-shifting function $\phi$, which also appears in the definition \eqref{eq:S_t} of $S_t$.
    
    Now we consider $C(P)_t:=h_t \circ r_{2, t} \circ r_{1, t} \lb \Phi(c(P)) \rb$ for general $P \subset N_\bR$.
    Each translation of $\Delta_\sigma \times \lc 0\rc$ by the vector $n(P, \sigma)$ considered in \eqref{eq:c(P)1}-\eqref{eq:decomp} amounts to a constant phase shift via the map $h_t$, and the collection of these translations gives rise to, so to speak, a ``discrete" phase shift of $\partial \nabla_w$.
By the definition \eqref{eq:face-S} of $P_{\scS}$, we have
\begin{align}\label{eq:nDs}
    \la m-w, n(P, \sigma) \ra=\varphi_{P}(m-w)
\end{align}
for any $m \in A_w$ such that $\bR_{\geq 0} \cdot (m-w) \prec \sigma$.
Tropical monomials of $\trop(f)$ that may attain the minimum of $\trop(f)$ on $\Delta_\sigma$ are the monomials corresponding to $w \in A$ or $m \in A_w$ such that $\bR_{\geq 0} \cdot (m-w) \prec \sigma$.
Recall that we saw from the computation in \eqref{eq:argument-phase} and the condition \eqref{eq:phase-shift} that the phase shift $\Phi$ corresponds to the changes of the arguments of the coefficients of leading terms of $f_t^w$ when transporting from $\tilde{Z}_t^w$ to $Z_t$.
Similarly, we can see from \eqref{eq:nDs} that the discrete phase shift given by translations of $\Delta_\sigma \times \lc 0\rc$ by the vector $n(P, \sigma)$ also corresponds to increasing 
the arguments of the coefficients $-k_{m, t}/k_{w, t}$ of leading terms of $f_t^w$ by $-2\pi \cdot \varphi_P(m-w)$.
By combining this effect from the discrete phase shift and the phase shift of $\Phi$, we can see that the cycle $C(P)_t$ is exactly the cycle obtained by the transportation of Construction \ref{transport}.
We conclude \pref{pr:C(P)-transport}.
\end{proof}

\begin{example}\label{eg:curve}
Let $d=1$.
Take a basis $\lc e_1, e_2 \rc$ of $M \cong \bZ^2$, and let $\lc e_1^\ast, e_2^\ast \rc$ be the dual basis of $N:=\Hom(M, \bZ)$.
We set $e_0:=-(e_1+e_2) \in M$.
We consider the polynomial
\begin{align}\label{eq:f-theta}
    f_{t, \theta}(z):=-1+\sum_{m \in (\Delta \cap M) \setminus \lc 0\rc} e^{\sqrt{-1} \theta_m}t^{\lambda_m} z^m 
    \in \bC \ld M \rd
\end{align}
with
\begin{align}
    \Delta:=\conv \lb \lc e_0, e_1, e_2\rc \rb \subset M_\bR, \quad 
    \theta_m \in \bR, \quad
\lambda_m:=
\left\{ \begin{array}{ll}
1 & m=e_0 \\
0 & m \in \lc e_1, e_2 \rc.
\end{array} 
\right. 
\end{align}
We have $W=\lc 0 \rc$.
The fan $\Sigma_{w=0}$ is the fan of $\bP^2$, and consists of three $2$-dimensional cones
\begin{align}
    \sigma_i:=\cone \lb  \lc e_0, e_1, e_2 \rc \setminus \lc e_i \rc \rb 
    \quad (i \in \lc 0, 1, 2 \rc)
\end{align}
and their faces.
\pref{fg:phase} shows the decomposition of $\partial \nabla_{w=0}$ of \eqref{eq:decomp}.
The set $\partial \nabla_{w=0}$ is the union of the colored lines, and the subsets $\Delta_{\sigma_i} \subset \partial \nabla_{w=0}$ $(i \in \lc 0, 1, 2 \rc)$ of \eqref{eq:delta-sigma} are the unions of the red, blue, green lines respectively.

\begin{figure}[htbp]
	\centering
	\includegraphics[scale=0.5]{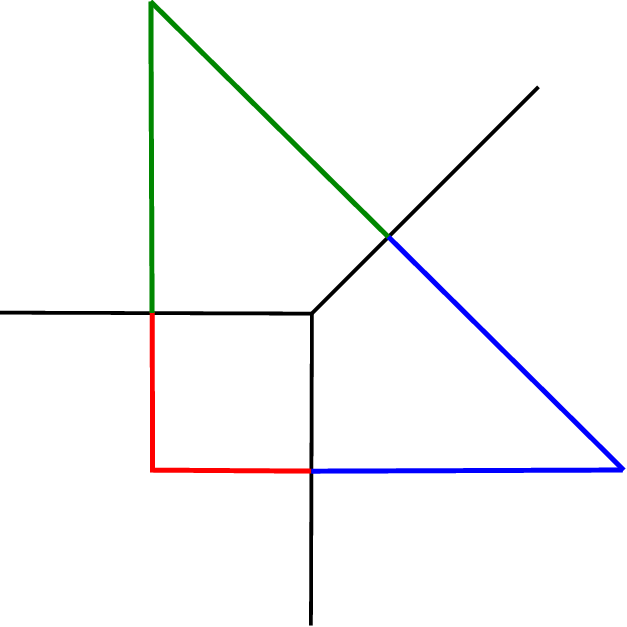}
    \caption{The decomposition of $\partial \nabla_{w=0}$ and the fan $\check{\Sigma}$}
\label{fg:phase}
\end{figure}

Let $a_i$ $(i \in \lc 0, 1, 2 \rc)$ be integers such that $-(a_0+a_1+a_2) > 0$.
We consider the piecewise linear function $\varphi_P \colon M_\bR \to \bR$ on the fan $\Sigma_{w=0}$ determined by $\varphi_P(e_i)=a_i$ $(i \in \lc 0, 1, 2 \rc)$.
This is the support function for the lattice polytope
\begin{align}\label{eq:P}
    P:=\conv \lb \lc v_0, v_1, v_2 \rc \rb \subset N_\bR
\end{align}
with
\begin{align}
    v_0:=a_1 \cdot e_1^\ast+a_2 \cdot e_2^\ast, \quad 
    v_1:=-(a_0+a_2) \cdot e_1^\ast+a_2 \cdot e_2^\ast, \quad 
    v_2:=a_1 \cdot e_1^\ast-(a_0+a_1) \cdot e_2^\ast.
\end{align}
The translation vectors $n(P, \sigma_i) \in N_\bR$ that we considered in the proof of \pref{pr:C(P)-transport} are
\begin{align}
    n(P, \sigma_i)=v_i \quad (i \in \lc 0, 1, 2 \rc).
\end{align}
The set of sequences of cones $\scrS_{q=0}^{w=0}$ consists of
\begin{align}
    \scS_i:=\lc \bR_{\geq 0} \cdot e_i \rc \quad (i \in \lc 0, 1, 2\rc),
\end{align}
and the faces $P_{\scS_i} \prec P$ are
\begin{align}
    P_{\scS_i}=\conv \lb \lc v_0, v_1, v_2 \rc \setminus \lc v_ i \rc \rb \quad (i \in \lc 0, 1, 2 \rc).
\end{align}
We have
\begin{align}
    c(P)=
    \sum_{i=0}^2 \Delta_{\sigma_i} \times n(P, \sigma_i)+
    \sum_{i=0}^2 \Delta_{\scS_i} \times P_{\scS_i},
\end{align}
and for any permutation $\lc i_0, i_1, i_2 \rc$ of $\lc 0, 1, 2\rc$, the dispersed pieces $\Delta_{\sigma_{i_1}} \times n(P, \sigma_{i_1})$ and $\Delta_{\sigma_{i_2}} \times n(P, \sigma_{i_2})$ are connected by $\Delta_{\scS_{i_0}} \times P_{\scS_{i_0}}$ (\pref{lm:c(P)-boundary}).

Consider the fan $\check{\Sigma}$ in $N_\bR$ that is dual to $\partial \nabla_{w=0}$ and divides $\partial \nabla_{w=0}$ into $\Delta_{\sigma_i}$ $(i \in \lc 0, 1, 2 \rc)$ as shown by black lines in \pref{fg:phase}.
It has three $1$-dimensional cones $\rho_i$ $(i \in \lc 0, 1, 2\rc)$ which are parallel to $e_1^\ast+e_2^\ast$, $-e_1^\ast$, and $-e_2^\ast$ respectively. 
Let $\check{\varphi} \colon N_\bR \to \bR$ be the piecewise linear function on the fan $\check{\Sigma}$, which takes the value $-a_i$ at the primitive generator of $\rho_i$ for every $i \in \lc 0, 1, 2\rc$.
In \cite[Example 5.4]{MR4194298}, a phase-shifting function is constructed by taking the gradient of a smoothing of the piecewise linear function $\check{\varphi} \colon N_\bR \to \bR$.
This phase shift corresponds to increasing each phase $\theta_{m=e_i}$ $(i \in \lc 0, 1, 2 \rc)$ in \eqref{eq:f-theta} by $-2 \pi \cdot a_i=-2 \pi \cdot \varphi_P(e_i)$ continuously, and this is exactly the transportation in Construction \ref{transport} for the polytope $P$ of \eqref{eq:P}.
See \eqref{eq:arg-arg}.

On the other hand, for any $i \in \lc 0, 1, 2\rc$, the gradient of the piecewise linear function $\check{\varphi}$ on the $2$-dimensional cone in $\check{\Sigma}$ not containing $\rho_i$ is $v_i=n(P, \sigma_i)$.
Therefore, translating each piece $\Delta_{\sigma_i}$ by $n(P, \sigma_i)$ amounts to the phase shift constructed by using the gradient of the piecewise linear function $\check{\varphi}$ (without taking its smoothing) as a phase-shifting function.
It is now clear that the cycle $C(P)_t$ is the same as the cycle obtained by the transportation in Construction \ref{transport} for the polytope $P$ of \eqref{eq:P}.
\end{example}

Let $C(\scE)_t \subset \mathring{Z}_t$ be the image by the map $h_t$ of the small  perturbation of $C(\scE)$ constructed in \pref{sc:step2}.

\begin{proposition}\label{pr:homologous}
The cycle $C(\scE)_t \subset \mathring{Z}_t$ is homologous to the cycle
\begin{align}\label{eq:homologous}
\sum_{j \in J} l_j \cdot C \lb P_{j} \rb_t,
\end{align}
where the integers $l_j$ and the polytopes $P_{j}$ $(j \in  J)$ are those of the expression \eqref{eq:scE} of $\scE \in K(Y_w)$.
\end{proposition}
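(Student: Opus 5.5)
The plan is to compare the two cycles already at the level of chains in $N_\bR \times (N_\bR/N)$ before applying the perturbation and $h_t$. By definition \eqref{eq:t-c(scE)},
\begin{align}
\tilde{c}(\scE)=\sum_{j \in J} l_j \cdot c(P_j)-\sum_{q=d+1-k}^d \sum_{\scS \in \scrS_q^w} \partial \lb \Delta_\scS \times c_\scS \rb,
\end{align}
so $\tilde{c}(\scE)-\sum_j l_j c(P_j)$ is the boundary of the $(d+1)$-chain $D:=-\sum_{q,\scS} \Delta_\scS \times c_\scS$. The same identity holds after applying the phase shift $\Phi$ of \eqref{eq:shift-map}, because $\Phi$ is continuous and pushforward commutes with $\partial$. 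Hence $C(\scE)-\sum_j l_j \Phi(c(P_j))=\partial \Phi(D)$ as chains in $N_\kappa^w \times (N_\bR/N)$.

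Next I would push this relation through the map $h_t \circ r_{2,t}\circ r_{1,t}$. By \pref{lm:image-d1}(1)--(3), the map $r_{1,t}$ sends the images of $C(\scE)$, of each $\Phi(\Delta_\scS\times c_\scS)$, and of each $\Phi(c(P_j))$ into $U_t$, where $r_{2,t}$ is defined. So the continuous map $F_t:=h_t\circ r_{2,t}\circ r_{1,t}$ is defined on a common domain containing all supports involved. That domain is the union of $\partial\nabla_w\times(N_\bR/N)$-pieces on which the condition $n'\in T_{n+\delta_{1,t}(n)}$ holds. By \eqref{eq:in-hypersurface}, which uses \pref{lm:delta1}(1) and \pref{lm:delta2}(1), $F_t$ sends all of this into $\mathring{Z}_t$. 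Pushing forward gives
\begin{align}
C(\scE)_t-\sum_j l_j C(P_j)_t=\partial\, (F_t)_\ast \Phi(D)
\end{align}
as singular chains in $\mathring{Z}_t$, which is the claimed homology.

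The main obstacle is well-definedness. The chains $c_\scS$ live in the torus only, and the map $r_{2,t}$ descends to $U_t$ only through the invariance of \pref{lm:delta2}(2). I therefore need every point $(n,n')$ of $\Delta_\scS\times c_\scS$ to satisfy two conditions after the shift. First, $n'+\phi(n)$ must lie in $T_{n+\delta_{1,t}(n)}$. Second, lifts of $n'$ to $N_\bR$ must differ by lattice vectors annihilated by $k-w$ for all $k\in K_\kappa^{n+\delta_{1,t}(n)}$.

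To settle this, I would reuse the argument of \pref{lm:image-d1}. The simplex $\Delta_\scS$ lies in the open star of the face dual to $\sigma[\scS]$, so $K_\kappa^{n+\delta_{1,t}(n)}\subset M_\scS$. Moreover $c_\scS\subset \sigma[\scS]^\perp/(\sigma[\scS]^\perp\cap N)$ by \pref{lm:cscds}(1), and $\sigma[\scS]^\perp\subset\bigcap_{k\in M_\scS}(k-w)^\perp$. Together these give both conditions.

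One must also check that $F_t$ is continuous on the union of these pieces, so that the pushforwards of the singular simplices form a genuine chain. This holds because $F_t$ is a single map restricted to subsets of $U_t$.
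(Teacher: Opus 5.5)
Your proposal is correct and is essentially the paper's argument. Write $\tilde{c}(\scE)=\sum_j l_j\, c(P_j)-\sum\partial(\Delta_\scS\times c_\scS)$, and use Lemma~\ref{lm:image-d1}(2) with Lemmas~\ref{lm:delta1}(1) and \ref{lm:delta2}(1) to see that $h_t\circ r_{2,t}\circ r_{1,t}(\Phi(\Delta_\scS\times c_\scS))$ is a chain in $\mathring{Z}_t$, so the two cycles differ by a boundary there. Your extra well-definedness discussion only re-derives Lemma~\ref{lm:image-d1}(2) and the descent of $\delta_{2,t}$ to $U_t$, which already follows from Lemma~\ref{lm:delta2}(2) (two lifts lying in the affine subspace automatically differ by lattice vectors annihilated by every $k-w$), so no further check is needed.
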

\begin{proof}
One has
\begin{align}
C(\scE)_t
&=h_t \circ r_{2, t} \circ r_{1, t} \lb \Phi(\tilde{c}(\scE)) \rb\\ \label{eq:homologous'}
&=h_t \circ r_{2, t} \circ r_{1, t} \lb \Phi\lb 
\sum_{j \in J} l_j \cdot c(P_j)
-\sum_{q=d+1-k}^d \sum_{\scS \in \scrS_q^w}
\partial \lb \Delta_\scS \times c_{\scS} \rb, \rb \rb,
\end{align}
by \eqref{eq:t-c(scE)} and \eqref{eq:c(scE)}.
We can see from \pref{lm:image-d1}(2), \pref{lm:delta1}(1), and \pref{lm:delta2}(1) that 
$h_t \circ r_{2, t} \circ r_{1, t} 
\lb \Phi \lb \Delta_\scS \times c_{\scS} \rb \rb$ is a singular chain in the hypersurface $\mathring{Z}_t$.
Therefore, \eqref{eq:homologous'} is homologous to
\begin{align}
h_t \circ r_{2, t} \circ r_{1, t} \lb \Phi \lb \sum_{j \in J} l_j \cdot c(P_j) \rb \rb
=\sum_{j \in J} l_j \cdot \lb h_t \circ r_{2, t} \circ r_{1, t} \lb \Phi \lb c(P_j) \rb \rb \rb,
\end{align}
which equals \eqref{eq:homologous}.
Thus we can conclude the proposition.
\end{proof}

\subsection{End of the proof of \pref{th:main2}}\label{sc:proof-main2}

We define the map \eqref{eq:Psi} by
\begin{align}\label{eq:Psi'}
    \Psi_t^w \colon K \lb Y_w \rb \to H_d \lb \mathring{Z}_t, \bZ \rb, 
    \quad \scE \mapsto \ld C(\scE)_t \rd.
\end{align}
We show that the homology class $\ld C(\scE)_t \rd$ does not depend on the expression \eqref{eq:scE} of $\scE$, and that the map \eqref{eq:Psi'} is well-defined.
For any $\scE \in K(Y_w)$, let $C(\scE)_t$ and $C(\scE)_t'$ be the cycles constructed by using two different expressions of $\scE$
\begin{align}
    \scE=\sum_{j \in J} l_j \cdot [L_j] =\sum_{j' \in J'} l_{j'} \cdot [L_{j'}],
\end{align}
where $l_j,l_{j'} \in \bZ$, and $L_j, L_{j'}$ are anti-nef line bundles on the toric variety $Y_w$.
For each $j \in J$ (resp. $j' \in J'$), let 
$P_{j} \subset N_\bR$ (resp. $P_{j'} \subset N_\bR$) be a lattice polytope whose associated support function defines the nef line bundle $L_j^{-1}$ (resp. $L_{j'}^{-1}$).
Then the zero element $0 \in K(Y_w)$ is expressed as
\begin{align}\label{eq:exp-0}
    0=\sum_{j \in J} l_j \cdot [L_j] -\sum_{j' \in J'} l_{j'} \cdot [L_{j'}].
\end{align}
The integer $k$ of \eqref{eq:k} for $0 \in K(Y_w)$ is $d$, and the Minkowski weight $a_\scE$ in \pref{th:main2}(2) with $\scE=0$ is zero.
We can see from \pref{lm:t-c(scE)}(1), (3) that the cycle $\tilde{c}(\scE)$ with $\scE=0$ associated with the expression \eqref{eq:exp-0} of $0 \in K(Y_w)$ is supported over $\partial \nabla_w^0$, and the fibers over the points of $\partial \nabla_w^0$ are all homologous to zero.
This implies that the cycle $C(\scE)_t$ with $\scE=0$ associated with the expression \eqref{eq:exp-0} is trivial.
Therefore, by using \pref{pr:homologous}, we can obtain
\begin{align}
0=\ld \sum_{j \in J} l_j \cdot C \lb P_{j} \rb_t-\sum_{j' \in J'} l_{j'} \cdot C \lb P_{j'} \rb_t\rd
=\ld C(\scE)_t \rd-\ld C(\scE)_t' \rd,
\end{align}
and see that the homology class $\ld C(\scE)_t \rd$ does not depend on the expression \eqref{eq:scE}, and \eqref{eq:Psi'} is well-defined.
We can also easily see from \pref{pr:homologous} that \eqref{eq:Psi'} is a group homomorphism.

Lastly, we show \pref{th:main2}(1).
Since $\Psi_t^w$ and $\ch(\bullet)$ are homomorphisms and $K(Y_w)$ is generated by anti-ample line bundles (\pref{pr:iritani}),
it suffices to show the asymptotic formulas of period integrals \eqref{eq:period-formula-1} and \eqref{eq:period-formula-2} in the case where $\scE \in  K(Y_w)$ is represented by an anti-ample line bundle $L$ on $Y_w$.
Let $P \subset N_\bR$ be a lattice polytope whose associated support function defines the ample line bundle $L^{-1}$.
Then the  homology class $\Psi_t^w (\scE)$ is the same as $[C(P)_t]$ by \pref{pr:homologous}.
The asymptotics of period integrals for the cycle $C(P)_t$ is computed in \cite[Theorem 1.1.2]{MR4782805}.
By \pref{pr:C(P)-transport}, we can obtain them by replacing $\arg \lb - c_m/c_w \rb$ in
\begin{align}
    \lb
-\frac{c_m}{c_{w}}
\rb^{-D_m^w}
:=\exp \lb -D_m^w \log \lb -\frac{c_m}{c_{w}} \rb \rb
=\exp \lb -D_m^w 
\lb \log \left| -\frac{c_m}{c_{w}} \right|+\sqrt{-1} \arg \lb -\frac{c_m}{c_{w}} \rb \rb \rb
\end{align}
of \cite[Theorem 1.1.2]{MR4782805} with $\arg \lb - c_m/c_w \rb-2\pi \cdot \varphi_P(m-w)$.
When $\conv \lb \lc w \rc \cup \tau_v \rb \in \scrT$, we obtain the asymptotics
\begin{align}\label{eq:pre-period}
\int_{C(P)_t}  \Omega_t^{l, v} 
&=
\frac{(-1)^{d+p_w}}{(l-1)!}
\int_{Y_{w}} 
t^{-\omega_{\lambda}^{w}}
\cdot 
\widehat{\Gamma}_w
\cdot 
E_{v, w}
\cdot
\prod_{m \in A_{w}} 
\lb
-\frac{c_m}{c_{w}}
\rb^{-D_m^w}
\exp \lb 2 \pi \sqrt{-1} \varphi_{P}(m-w) D_m^w \rb
+O \lb t^\epsilon \rb
\end{align}
as $t \to +0$. 
We have 
\begin{align}
\exp \lb 2 \pi \sqrt{-1} \varphi_{P}(m-w) D_m^w \rb
=(2 \pi \sqrt{-1})^{\deg/2} \ch(L)
=(2 \pi \sqrt{-1})^{\deg/2} \ch(\scE),
\end{align}
and we can see that \eqref{eq:pre-period} equals \eqref{eq:period-formula-1}.
When $\conv \lb \lc w \rc \cup \tau_v \rb \nin \scrT$, we also obtain
\begin{align}\label{eq:period2}
\int_{C(P)_t}  \Omega_t^{l, v} 
=
O \lb t^\epsilon \rb
\end{align}
as $t \to +0$. 
Thus we can conclude \pref{th:main2}(1).
Since \pref{th:main2}(2) has been confirmed in \pref{sc:step1-3} and \pref{sc:step2}, we can conclude \pref{th:main2}.

\section{Proofs of \pref{th:main3} and \pref{cr:CY}}\label{sc:proof3}

\subsection{Proof of \pref{th:main3}}

Recall that we have $A^{d-q}(Y_w) \cong \MW^{d-q}(\Sigma_w)$ (cf.~\eqref{eq:amw}).
Since the Chow cohomology group $A^{d-q}(Y_w)$ is generated by the toric strata of the toric variety $Y_w$ (cf.~e.g.~\cite[Lemma 12.5.1]{MR2810322}), an arbitrary non-zero element of $A^{d-q}(Y_w)$ can be written as
\begin{align}\label{eq:a-class}
    \sum_{\sigma \in \Sigma_w(d-q)} c_\sigma \cdot [D_{\sigma, 1}] \cdots [D_{\sigma, d-q}] \in A_{q+1}(Y_w)=A^{d-q}(Y_w),
\end{align}
where $c_\sigma \in \bZ$, and $D_{\sigma, i}$ $(1 \leq i \leq d-q)$ are the toric divisors in the toric variety $Y_w$ corresponding to the $1$-dimensional faces of the cone $\sigma \in \Sigma_w(d-q)$.
For proving \pref{th:main3}, it suffices to prove that there exists an element 
$\scE \in K(Y_w)$ 
such that the integer $k$ of \eqref{eq:k} for $\scE$ is $d-q$, and $(-1)^{q+1} \cdot \ch_{d-q}(\scE)$ equals \eqref{eq:a-class}.

Let $D_0$ be an ample toric divisor in the toric variety $Y_w$.
For any divisor $D_{\sigma, i}$ appearing in \eqref{eq:a-class}, there exists a natural number $l$ such that $D_{\sigma, i}+l\cdot D_0$ is ample.
Therefore, we can write 
\begin{align}
D_{\sigma, i}=D_{\sigma, i}^+-D_{\sigma, i}^-
\end{align}
with ample toric divisors $D_{\sigma, i}^+(=D_{\sigma, i}+l\cdot D_0), D_{\sigma, i}^-(=l\cdot D_0)$.
Therefore, the class \eqref{eq:a-class} can also be written as a finite sum
\begin{align}\label{eq:a-class2}
    \sum_{j} c_j \cdot [D_{j, 1}] \cdots [D_{j, d-q}]
\end{align}
with integers $c_j \in \bZ$ and ample toric divisors $D_{j, i}$ $(1 \leq i \leq d-q)$ in the toric variety $Y_w$.
Let $Y_{j, i} \subset Y_w$ be the closed subscheme defined by a general section of $\scO_{Y_w}(D_{j, i})$, and set 
$X_{j, 0}:=Y_w$, $X_{j, s}:=\bigcap_{i=1}^{s} Y_{j, i}$ $(1 \leq s \leq d-q)$.
We consider
\begin{align}
    \scE:=(-1)^{q+1} \cdot \lb \sum_j c_j \cdot \ld \scO_{X_{j, d-q}} \rd \rb \in K(Y_w).
\end{align}
We will prove that the integer $k$ of \eqref{eq:k} for this $\scE$ is $d-q$, 
and $(-1)^{q+1} \cdot \ch_{d-q} \lb \scE \rb$ equals \eqref{eq:a-class2}.

In order to prove it, we compute $\ch(\scE)$.
First, we claim
\begin{align}\label{eq:ch}
    \ch \lb \scO_{X_{j, s}} \rb
    =\sum_{I \subset \lc 1, \cdots, s\rc} (-1)^{\# I} \cdot \ch \lb \scO_{Y_w} \lb -D_{j, I} \rb \rb \quad (0 \leq s \leq d-q),
\end{align}
where $D_{j, I}:=\sum_{i \in I} D_{j, i}$, and the sum is taken over all subsets $I \subset \lc 1, \cdots, s\rc$, including $I=\emptyset$.
We prove \eqref{eq:ch} by induction on $s$.
\eqref{eq:ch} is trivial when $s=0$.
We suppose that \eqref{eq:ch} holds for $s=s_0 (\geq 0)$, and prove it for $s=s_0+1$.
By the exact sequence
\begin{align}
    0 \to \scO_{X_{j, s_0}} (-Y_{j, s_0+1})
    \to \scO_{X_{j, s_0}}
    \to \scO_{X_{j,  s_0+1}}\to 0
\end{align}
and the induction hypothesis, we obtain
\begin{align}
\ch \lb \scO_{X_{j, s_0+1}}\rb
    &=\ch \lb \scO_{X_{j, s_0}}\rb-\ch \lb \scO_{X_{j, s_0}} (-Y_{j, s_0+1}) \rb\\
    &=\sum_{I \subset \lc 1, \cdots, s_0\rc} (-1)^{\# I} \cdot \ch \lb \scO_{Y_w} \lb -D_{j, I} \rb \rb
    -\sum_{I \subset \lc 1, \cdots, s_0\rc} (-1)^{\# I} \cdot \ch \lb \scO_{Y_w} \lb -D_{j, I}-D_{j, s_0+1} \rb \rb\\
    &=\sum_{I \subset \lc 1, \cdots, s_0+1\rc} (-1)^{\# I} \cdot \ch \lb \scO_{Y_w} \lb -D_{j, I} \rb \rb.
\end{align}
We obtained \eqref{eq:ch} for $s=s_0+1$.
Thus we can conclude \eqref{eq:ch}.

The right-hand side of \eqref{eq:ch} is 
\begin{align}\label{eq:ch2}
\sum_{I \subset \lc 1, \cdots, s\rc} (-1)^{\# I} \cdot \exp \lb -D_{j, I} \rb
=
\sum_{r=0}^{d+1} \frac{(-1)^r}{r!} \cdot
\lb \sum_{I \subset \lc 1, \cdots, s\rc} (-1)^{\# I} \cdot \lb \sum_{i \in I} D_{j, i} \rb^r \rb.
\end{align}
Regarding the right-hand side of \eqref{eq:ch2}, we have the following formula:

\begin{lemma}\label{lm:divisor-intersection}
    Let $r$ be an integer such that $0 \leq r \leq s (\leq d-q)$.
    One has
    \begin{align}\label{eq:divisor-intersection}
        \sum_{I \subset \lc 1, \cdots, s\rc} (-1)^{\#I} 
        \cdot 
        \lb \sum_{i \in I}  D_{j, i} \rb^r
        =
        \left\{ \begin{array}{ll}
        (-1)^{s} \cdot s! \cdot D_{j, 1} \cdot \cdots \cdot D_{j, s} & r=s\\
0 & 0 \leq r \leq s-1.
\end{array} 
\right.
    \end{align}
\end{lemma}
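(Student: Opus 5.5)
The identity is purely algebraic. It holds in any commutative ring in which the $D_{j,i}$ are elements, so I will prove it for indeterminates $x_1,\dots,x_s$ and then specialize $x_i=D_{j,i}$ in $A^\ast(Y_w)$. The plan is an inclusion–exclusion argument on monomials. I expand $\lb \sum_{i \in I} x_i \rb^r$ by the multinomial theorem as a sum over functions $g \colon \lc 1, \dots, r\rc \to I$ of the monomials $\prod_{a=1}^r x_{g(a)}$. Exchanging the order of summation, the left-hand side of \eqref{eq:divisor-intersection} becomes
\begin{align}
\sum_{g \colon \lc 1, \dots, r \rc \to \lc 1, \dots, s\rc} \lb \prod_{a=1}^r x_{g(a)} \rb \cdot \sum_{\substack{I \subset \lc 1, \dots, s \rc \\ I \supset \mathrm{Im}(g)}} (-1)^{\# I}.
\end{align}

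The key step is to evaluate the inner sum. Write $J := \mathrm{Im}(g)$. The inner sum runs over $I = J \sqcup K$ with $K \subset \lc 1, \dots, s \rc \setminus J$, so it equals $(-1)^{\# J} (1-1)^{s - \# J}$. This vanishes unless $J = \lc 1, \dots, s \rc$, in which case it equals $(-1)^s$. A map $g$ from an $r$-element set can only be surjective onto $\lc 1, \dots, s\rc$ if $r \geq s$. Hence for $0 \leq r \leq s-1$ every term vanishes and the sum is $0$.

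When $r = s$, the surjective maps $g$ are exactly the $s!$ bijections. Each bijection contributes the monomial $x_1 \cdots x_s$ with sign $(-1)^s$, so the total is $(-1)^s \cdot s! \cdot x_1 \cdots x_s$. Specializing $x_i = D_{j,i}$ then gives \eqref{eq:divisor-intersection}.

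There is no real obstacle here. The only point needing care is the edge case $s = 0$, where $r = 0$ and the empty set $I$ contributes $1 = (-1)^0 \cdot 0!$; this is consistent with the formula under the convention that $0^0 = 1$ for the empty sum raised to the zeroth power.
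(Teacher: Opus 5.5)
Your proof is correct, and it takes a different route from the paper. The paper argues by induction on $s$. It splits the subsets of $\{1,\dots,s_0+1\}$ according to whether they contain $s_0+1$, expands $\bigl(D_{j,s_0+1}+\sum_{i\in I}D_{j,i}\bigr)^r$ binomially, and applies the induction hypothesis to the lower exponents $r-u$. The $u=0$ term cancels the first half identically. This matters when $r=s_0+1$, since the hypothesis does not cover exponent $s_0+1$ with only $s_0$ divisors. For $r=s_0+1$ only the $u=1$ term survives, producing the factor $(s_0+1)\cdot s_0!$.

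Your argument replaces this bookkeeping with one inclusion--exclusion step. Summing $(-1)^{\#I}$ over $I\supset \mathrm{Im}(g)$ kills every monomial that does not involve all $s$ variables. Hence the left-hand side equals $(-1)^s$ times the sum of $\prod_a x_{g(a)}$ over surjections $g\colon\{1,\dots,r\}\to\{1,\dots,s\}$.

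This buys three things. The argument is non-inductive and gives a closed form for every $r$, not only $r\le s$. The vanishing for $r<s$ becomes transparent, since no surjection exists. The constant $s!$ is visibly the number of bijections.

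The paper's induction, in exchange, mirrors the one-divisor-at-a-time induction it uses just before to establish \eqref{eq:ch}. So both computations follow the same Koszul-type pattern.
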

\begin{proof}
    We prove the formula by induction on $s$.
    When $s=0$, the claim is obvious.
    We suppose that the formula holds for $s=s_0$, and prove it for $s=s_0+1$ ($0 \leq s_0 \leq  d-q-1$).
The left-hand side of \eqref{eq:divisor-intersection} with $s=s_0+1$ is written as
\begin{align}
\sum_{I \subset \lc 1, \cdots, s_0\rc}
(-1)^{\#I} \cdot
\lb \sum_{i \in I} D_{j, i} \rb^{r}
+
\sum_{I \subset \lc 1, \cdots, s_0\rc}
(-1)^{\#I+1} 
\lc
\sum_{u=0}^r \binom{r}{u} \cdot (D_{j, s_0+1})^{u} \cdot  \lb \sum_{i \in I} D_{j, i} \rb^{r-u}
\rc,
\end{align}
which is equal to
\begin{align}\label{eq:divisor-intersection1}
\sum_{I \subset \lc 1, \cdots, s_0 \rc}
(-1)^{\#I} \cdot
\lb \sum_{i \in I} D_{j, i} \rb^{r}
-
\sum_{u=0}^r \binom{r}{u} \cdot
(D_{j, s_0+1})^{u}
\cdot
\lc
\sum_{I \subset \lc 1, \cdots, s_0\rc}
(-1)^{\#I} \cdot
\lb \sum_{i \in I} D_{j, i} \rb^{r-u} 
\rc.
\end{align}
By using the induction hypothesis, we can see in \eqref{eq:divisor-intersection1} that the former and latter parts cancel out when $r=s_0$, and both parts are zero when $0 \leq r \leq s_0-1$.
When $r=s_0+1$, the latter part is non-zero only for $u=0, 1$.
The term of $u=0$ and the former part cancel out, and the term of $u=1$ is 
\begin{align}
-(s_0+1)\cdot D_{j, s_0+1} \cdot (-1)^{s_0} \cdot s_0 ! \cdot D_{j, 1} \cdot \cdots \cdot D_{j, s_0}
=
(-1)^{s_0+1} \cdot (s_0+1)! \cdot D_{j, 1} \cdot \cdots \cdot D_{j, s_0+1}.
\end{align}
Hence, \eqref{eq:divisor-intersection1} equals $(-1)^{s_0+1} \cdot (s_0+1)! \cdot D_{j, 1} \cdot \cdots \cdot D_{j, s_0+1}$.
We obtained the formula \eqref{eq:divisor-intersection} for $s=s_0+1$.
\end{proof}

We can see from \eqref{eq:ch}, \eqref{eq:ch2}, and \pref{lm:divisor-intersection} that we have
\begin{align}
    \ch_{k'} \lb \scO_{X_{j, s}} \rb
    =
    \left\{ \begin{array}{ll}
    [D_{j, 1}] \cdots [D_{j, s}] & k'=s\\
0 & k' <s.
\end{array} 
\right.
\end{align}
By this for $s=d-q$, we can see that $(-1)^{q+1} \cdot \ch_{d-q} \lb \scE \rb$ equals \eqref{eq:a-class2}, and the integer $k$ of \eqref{eq:k} for $\scE$ is $d-q$.
Thus we can conclude \pref{th:main3}.

\subsection{Proof of \pref{cr:CY}}

The formula \eqref{eq:period-formula-1'} immediately follows from \pref{th:main2}(1-a).
The latter claim of \pref{cr:CY} can also be checked as follows:
The argument in the previous subsection also works when \eqref{eq:a-class2} is replaced with 
$\ld D_1 \rd \cdots \ld D_r \rd \in A_{d+1-r}(Y_{w=0})=A^r(Y_{w=0})$ in the setting of \pref{cr:CY}, and we can see for $\scO_X$ in \eqref{eq:period-formula-1'} that the integer $k$ of \eqref{eq:k} for $\scO_X$ is $r$ (the number of divisors $D_i$), and $\ch_r(\scO_X)=\ld D_1 \rd \cdots \ld D_r \rd$.
Therefore, the cycle $C(\scO_X)_t$ in \eqref{eq:period-formula-1'} is a lift of the tropical $(r, d-r)$-cycle $c(a_\scD)$, where $a_{\scD} \in \MW^{r}(\Sigma_{w=0})$ is the Minkowski weight corresponding to $(-1)^{d+1-r}\ld D_1 \rd \cdots \ld D_r \rd$.

\section{Proof of \pref{th:main4}}\label{sc:proof4}

Let $w_1, w_2 \in \rint(\Delta)$, 
and $a_1 \in \MW^{d-q} \lb \Sigma_{w_1} \rb, a_2 \in \MW^{q} \lb \Sigma_{w_2} \rb$.
We consider again the deformation $c(a_2)'$ of the tropical cycle $c(a_2)$, which we constructed in \pref{sc:main1-2} with the homeomorphism $\phi_{w_2} \colon \partial \nabla_{w_2} \to S$ \eqref{eq:phi-w} and a generic element $m_0 \in M_\bR$.
(The tropical cycles $c(a_1)$ and $c(a_2)'$ intersect transversely.)
We also consider the corresponding deformation of the cycle $C(a_2)$ of \pref{th:main2}.
Namely, we deform the first component of
$C(a_2) \subset \partial \nabla_{w_2}^{d-q} \times \lb \left. N_\bR \middle/ N \right. \rb$ by the deformation $c(a_2)'$ of $c(a_2) \subset \partial \nabla_{w_2}^{d-q}$, leaving the second component unchanged.
We write the resulting cycle as 
$C(a_2)' \subset \partial \nabla_{w_2} \times \lb \left. N_\bR \middle/ N \right. \rb$.
We further consider its small perturbation, the image of $C(a_2)'$ by the map $r_{2, t} \circ r_{1, t}$, and its further image $C(a_2)_t' \subset \mathring{Z}_t$ by the map $h_t$.

The cycles $C(a_1)_t$ and $C(a_2)_t'$ intersect at the points corresponding to the intersection points of the cycle $C(a_1)$ and the deformed cycle $C(a_2)'$.
Furthermore, the cycles $C(a_1)$ and $C(a_2)'$ can only intersect over the intersection points of the tropical cycles $c(a_1)$ and $c(a_2)'$.
Therefore, it suffices to show that the multiplicity of an intersection point of $C(a_1)_t$ and $C(a_2)_t'$ coincides with the tropical multiplicity of the corresponding intersection point of the tropical cycles $c(a_1)$ and $c(a_2)'$.
We will see this in the following.

First, we consider the case of $w_1=w_2=:w \in \rint(\Delta)$.
Let $n \in \partial \nabla_w$ be an intersection point of $c(a_1)$ and $c(a_2)'$, and suppose that it is the intersection point of a simplex $\Delta_\scS$ in $c(a_1)$ and (a deformation of) a simplex $\Delta_{\scS'}$ in $c(a_2)'$ with 
$\scS=\lc \sigma_1 \prec \cdots \prec \sigma_{q+1}=\sigma \rc, 
\scS'=\lc \sigma'_1 \prec \cdots \prec \sigma'_{d+1-q}=\sigma' \rc$.
Since the simplices $\Delta_\scS$ and $\Delta_{\scS'}$ should be in the same facet of $\nabla_w$, we have $\sigma_1=\sigma'_1=:\rho$, and the intersection point $n$ is in the relative interior of the facet of $\nabla_w$ dual to $\rho$.
Let $m \in A_w$ be the point such that $\rho = \bR_{\geq 0} \cdot (m-w)$.
The tropical monomials of $\trop(f)$ that attain the minimum around $n$ are 
$\val(k_w)+\la w, \bullet \ra$
and
$\val(k_m)+\la m, \bullet \ra$.
Therefore, in a small neighborhood of $h_t (\pi_1^{-1} (n))$, 
the defining equation $f_t^w(z)=1$ of the hypersurface $\mathring{Z}_t$ is approximately written as
\begin{align}\label{eq:approximate}
    1+\frac{c_m}{c_w} t^{\lambda_m-\lambda_w} z^{m-w} \sim 0,
\end{align}
ignoring lower order terms.
Let $\lc m_1, \cdots, m_{d}\rc \subset M$ be an integral linear coordinate system on 
the facet of $\nabla_w$ dual to $\rho$.
Then $\lc z^{m_1}, \cdots, z^{m_d}, z^{m-w} \rc$ and 
$\lc z^{m_1}, \cdots, z^{m_d}\rc$ give coordinate systems on $N_{\bC^\ast}$ and the algebraic subtorus
\begin{align}\label{eq:ag-subtorus}
    \lc z \in N_{\bC^\ast} \relmid z^{m-w}=1\rc \subset N_{\bC^\ast}
\end{align}
respectively.
By applying the implicit function theorem to \eqref{eq:approximate}, 
we can write $z^{m-w}$ as a function $y\lb z \rb$ of $z=(z^{m_1}, \cdots, z^{m_d})$, and the map
\begin{align}
    \lb z^{m_1}, \cdots, z^{m_d}\rb \mapsto \lb z^{m_1}, \cdots, z^{m_d}, z^{m-w}=y\lb z \rb \rb
\end{align}
gives an isomorphism from a subset of the algebraic subtorus \eqref{eq:ag-subtorus} to a small neighborhood of the intersection point of $C(a_1)_t$ and $C(a_2)_t'$ (corresponding to $n$) in the hypersurface $\mathring{Z}_t$.
\eqref{eq:ag-subtorus} is further isomorphic to the trivial torus bundle
\begin{align}\label{eq:local-bdl}
\rho^\perp
\times \lb \left. \rho^\perp \middle/ (N \cap \rho^\perp) \right. \rb
\end{align}
via the map $h_t$ \eqref{eq:h_t}.
We identify these sets in the following.

From \pref{th:main2}(2), we can see that locally around the intersection point, the cycles $C(a_1)_t$ and $C(a_2)_t'$ are small perturbations of trivial torus bundles over $\mathring{\Delta}_\scS$ and (the deformation of) $\mathring{\Delta}_{\scS'}$, whose torus fibers define the homology classes
\begin{align}\label{eq:fiber-class1}
    a_1(\sigma[\scS]) \cdot f (\scS) \in \bigwedge^{d-q} (N \cap \rho^\perp) 
    &\cong H_{d-q} \lb \left. \rho^\perp \middle/ (N \cap \rho^\perp) \right., \bZ \rb\\ \label{eq:fiber-class2}
    a_2(\sigma[\scS']) \cdot f (\scS') \in \bigwedge^{q} (N \cap \rho^\perp)
    &\cong H_{q} \lb \left. \rho^\perp \middle/ (N \cap \rho^\perp) \right., \bZ \rb
\end{align}
in \eqref{eq:local-bdl} respectively.

Let $\Omega_\rho$ be the integral volume form on the facet of $\nabla_w$ dual to $\rho$, which defines the orientation under which simplices $\Delta_\scS$ and (the deformation of) $\Delta_{\scS'}$ intersect positively at the point $n$.
Then the tropical multiplicity ($\Omega_x \lb a_\Delta \wedge b_{\Delta'} \rb$ 
in \eqref{eq:tropical-intersection}) of the tropical intersection $\la c(a_1), c(a_2)' \ra$ at the intersection point $n$ is 
\begin{align}\label{eq:trop-multi}
    \Omega_\rho 
    \lb a_1(\sigma[\scS]) \cdot f (\scS) \wedge a_2(\sigma[\scS']) \cdot f (\scS') \rb. 
\end{align}
The integral volume form $\Omega_\rho$ defines an orientation of $\rho^\perp$, which also defines an orientation of \eqref{eq:local-bdl}.
By \eqref{eq:fiber-class1} and \eqref{eq:fiber-class2}, we can see that the multiplicity of the intersection point of $C(a_1)_t$ and $C(a_2)_t'$ corresponding to $n$ in \eqref{eq:local-bdl} is
\begin{align}\label{eq:pre-multi}
 (-1)^{(d-q)^2} \cdot
    \Omega_\rho 
    \lb a_1(\sigma[\scS]) \cdot f (\scS) \wedge a_2(\sigma[\scS']) \cdot f (\scS') \rb. 
\end{align}
The sign $(-1)^{(d-q)^2}=(-1)^{(d-q)}$ in \eqref{eq:pre-multi} corresponds to rearranging the $(d-q)$ torus fiber directions of $C(a_1)_t$ and the $(d-q)$ tropical cycle directions of $C(a_2)_t'$ for comparison with the orientation of \eqref{eq:local-bdl}.
The orientation of the hypersurface $\mathring{Z}_t$ coincides with that of \eqref{eq:ag-subtorus}.
This is $(-1)^{d(d+1)/2}$ times that of \eqref{eq:local-bdl}, since if we write $z^{m_i}=r_i e^{\sqrt{-1}\theta_i}$ using polar coordinates, set $n_i:=\la m_i, n\ra, n_i':=\la m_i, n'\ra$ for elements $(n, n')$ in \eqref{eq:local-bdl}, and let 
$(n_1, \cdots, n_d, n_1', \cdots, n_d')$ be the coordinate system on \eqref{eq:local-bdl}, then one has
\begin{align}
h_t^\ast \lb \bigwedge_{i=1}^d dr_i \wedge d\theta_i\rb
=
\bigwedge_{i=1}^d d\lb t^{n_i} \rb \wedge d(2 \pi n'_i)
=(-1)^{d+d(d-1)/2} \cdot
\lb \bigwedge_{i=1}^d (-t^{n_i} \log t) dn_i\rb \wedge \lb \bigwedge_{i=1}^d 2\pi dn'_i \rb,
\end{align}
and $d+d(d-1)/2=d(d+1)/2$.
(Notice that $-t^{n_i} \log t>0$.)
Therefore, the multiplicity of the intersection point of $C(a_1)_t$ and $C(a_2)_t'$ corresponding to $n$ in the hypersurface $\mathring{Z}_t$ is 
\begin{align}
 (-1)^{d(d+1)/2} \cdot (-1)^{(d-q)} \cdot
    \Omega_\rho 
    \lb a(\sigma[\scS]) \cdot f (\scS) \wedge a(\sigma[\scS']) \cdot f (\scS') \rb. 
\end{align}
Comparing this with \eqref{eq:trop-multi}, we can obtain \eqref{eq:intersection-compare} in \pref{th:main4}.
We conclude \pref{th:main4} for the case of $w_1=w_2$.
We can also prove \pref{th:main4} for the case where $w_1 \neq w_2$ and $\conv (\lc w_1, w_2 \rc) \in \scrT$ in the same way.

Lastly, we consider the case of $\conv (\lc w_1, w_2 \rc) \nin \scrT$.
We have $\la c(a_1), c(a_2) \ra=0$ (\pref{th:main1}(2-c)).
The tropical cycles $c(a_1)$ and $c(a_2)$ are supported in $\partial \nabla_{w_1}$ and $\partial \nabla_{w_2}$ respectively, and we have $\nabla_{w_1} \cap \nabla_{w_2}=\emptyset$.
From \pref{th:main2}(2), it is clear that we have $C(a_1)_t \cap C(a_2)_t =\emptyset$, and
$\la C(a_1)_t, C(a_2)_t \ra=0$.
Thus \pref{th:main4} also holds in this case.

\section*{Acknowledgements}

I am very grateful to Hiroshi Iritani for many insightful comments on an earlier draft of this article. 
In particular, the idea of formulating the statement of \pref{th:main2} in terms of K-groups of toric varieties was suggested by him. 
Furthermore, I also learned \pref{pr:iritani} and its proof from him.
I also thank Bernd Siebert for helpful discussions at the early stage of this project.
I learned about \cite{MR3088918} from Kris Shaw when I was a graduate student and am grateful for the kind explanations I received.
This work was supported by JSPS KAKENHI Grant Number JP25K17262.

\bibliographystyle{amsalpha}
\bibliography{bibs}

@book {MR224083,
    AUTHOR = {Atiyah, M. F.},
     TITLE = {{$K$}-theory},
      NOTE = {Lecture notes by D. W. Anderson},
 PUBLISHER = {W. A. Benjamin, Inc., New York-Amsterdam},
      YEAR = {1967},
     PAGES = {v+166+xlix},
   MRCLASS = {55.30 (57.00)},
  MRNUMBER = {224083},
}

@book {MR4406774,
    AUTHOR = {Friedman, Greg},
     TITLE = {Singular intersection homology},
    SERIES = {New Mathematical Monographs},
    VOLUME = {33},
 PUBLISHER = {Cambridge University Press, Cambridge},
      YEAR = {2020},
     PAGES = {xxiii+798},
      ISBN = {978-1-107-15074-4},
   MRCLASS = {55N33 (57N80 57Pxx)},
  MRNUMBER = {4406774},
MRREVIEWER = {Juan\ Antonio\ P\'erez},
       DOI = {10.1017/9781316584446},
       URL = {https://doi.org/10.1017/9781316584446},
}

@article {MR1234308,
    AUTHOR = {Morelli, Robert},
     TITLE = {The {$K$}-theory of a toric variety},
   JOURNAL = {Adv. Math.},
  FJOURNAL = {Advances in Mathematics},
    VOLUME = {100},
      YEAR = {1993},
    NUMBER = {2},
     PAGES = {154--182},
      ISSN = {0001-8708,1090-2082},
   MRCLASS = {14M25 (19E99 52B20 52B45)},
  MRNUMBER = {1234308},
MRREVIEWER = {G.\ K.\ Sankaran},
       DOI = {10.1006/aima.1993.1032},
       URL = {https://doi.org/10.1006/aima.1993.1032},
}

@incollection {MR3088918,
    AUTHOR = {Shaw, Kristin M.},
     TITLE = {Tropical {$(1,1)$}-homology for floor decomposed surfaces},
 BOOKTITLE = {Algebraic and combinatorial aspects of tropical geometry},
    SERIES = {Contemp. Math.},
    VOLUME = {589},
     PAGES = {329--350},
 PUBLISHER = {Amer. Math. Soc., Providence, RI},
      YEAR = {2013},
      ISBN = {978-0-8218-9146-9},
   MRCLASS = {14T05 (14C30)},
  MRNUMBER = {3088918},
MRREVIEWER = {Joaquim\ Ro\'e},
       DOI = {10.1090/conm/589/11750},
       URL = {https://doi.org/10.1090/conm/589/11750},
}

@article {MR3032930,
    AUTHOR = {Shaw, Kristin M.},
     TITLE = {A tropical intersection product in matroidal fans},
   JOURNAL = {SIAM J. Discrete Math.},
  FJOURNAL = {SIAM Journal on Discrete Mathematics},
    VOLUME = {27},
      YEAR = {2013},
    NUMBER = {1},
     PAGES = {459--491},
      ISSN = {0895-4801,1095-7146},
   MRCLASS = {14T05 (05B35 14C17)},
  MRNUMBER = {3032930},
MRREVIEWER = {Patrick\ Popescu-Pampu},
       DOI = {10.1137/110850141},
       URL = {https://doi.org/10.1137/110850141},
}

@article {MR2887109,
    AUTHOR = {Katz, Eric},
     TITLE = {Tropical intersection theory from toric varieties},
   JOURNAL = {Collect. Math.},
  FJOURNAL = {Collectanea Mathematica},
    VOLUME = {63},
      YEAR = {2012},
    NUMBER = {1},
     PAGES = {29--44},
      ISSN = {0010-0757,2038-4815},
   MRCLASS = {14T05 (14M25)},
  MRNUMBER = {2887109},
MRREVIEWER = {Frank\ Sottile},
       DOI = {10.1007/s13348-010-0014-8},
       URL = {https://doi.org/10.1007/s13348-010-0014-8},
}

@article {MR2591823,
    AUTHOR = {Allermann, Lars and Rau, Johannes},
     TITLE = {First steps in tropical intersection theory},
   JOURNAL = {Math. Z.},
  FJOURNAL = {Mathematische Zeitschrift},
    VOLUME = {264},
      YEAR = {2010},
    NUMBER = {3},
     PAGES = {633--670},
      ISSN = {0025-5874,1432-1823},
   MRCLASS = {14T05 (14C17)},
  MRNUMBER = {2591823},
MRREVIEWER = {Joaquim\ Ro\'e},
       DOI = {10.1007/s00209-009-0483-1},
       URL = {https://doi.org/10.1007/s00209-009-0483-1},
}

@article {MR4782805,
    AUTHOR = {Yamamoto, Yuto},
     TITLE = {Period integrals of hypersurfaces via tropical geometry},
   JOURNAL = {Int. Math. Res. Not. IMRN},
  FJOURNAL = {International Mathematics Research Notices. IMRN},
      YEAR = {2024},
    NUMBER = {15},
     PAGES = {11386--11425},
      ISSN = {1073-7928,1687-0247},
   MRCLASS = {14T20 (14D05 14M25)},
  MRNUMBER = {4782805},
       DOI = {10.1093/imrn/rnae123},
       URL = {https://doi.org/10.1093/imrn/rnae123},
}

@article {MR2019444,
    AUTHOR = {Mavlyutov, Anvar R.},
     TITLE = {On the chiral ring of {C}alabi-{Y}au hypersurfaces in toric
              varieties},
   JOURNAL = {Compositio Math.},
  FJOURNAL = {Compositio Mathematica},
    VOLUME = {138},
      YEAR = {2003},
    NUMBER = {3},
     PAGES = {289--336},
      ISSN = {0010-437X},
   MRCLASS = {14J32 (14M25 32Q25)},
  MRNUMBER = {2019444},
MRREVIEWER = {Lev A. Borisov},
       DOI = {10.1023/A:1027367922964},
       URL = {https://doi.org/10.1023/A:1027367922964},
}

@article {MR1290195,
    AUTHOR = {Batyrev, Victor V. and Cox, David A.},
     TITLE = {On the {H}odge structure of projective hypersurfaces in toric
              varieties},
   JOURNAL = {Duke Math. J.},
  FJOURNAL = {Duke Mathematical Journal},
    VOLUME = {75},
      YEAR = {1994},
    NUMBER = {2},
     PAGES = {293--338},
      ISSN = {0012-7094},
   MRCLASS = {14M25 (14C30 14D07 14F10)},
  MRNUMBER = {1290195},
MRREVIEWER = {I. Dolgachev},
       DOI = {10.1215/S0012-7094-94-07509-1},
       URL = {https://doi.org/10.1215/S0012-7094-94-07509-1},
}

@article{MR2079993,
	Author = {Mikhalkin, Grigory},
	Coden = {TPLGAF},
	Doi = {10.1016/j.top.2003.11.006},
	Fjournal = {Topology. An International Journal of Mathematics},
	Issn = {0040-9383},
	Journal = {Topology},
	Mrclass = {14J70 (14M25 32Q55)},
	Mrreviewer = {G. K. Sankaran},
	Number = {5},
	Pages = {1035--1065},
	Title = {Decomposition into pairs-of-pants for complex algebraic hypersurfaces},
	Url = {http://dx.doi.org/10.1016/j.top.2003.11.006},
	Volume = {43},
	Year = {2004}}

@article {MR2240909,
    AUTHOR = {Abouzaid, Mohammed},
     TITLE = {Homogeneous coordinate rings and mirror symmetry for toric
              varieties},
      NOTE = {[Paging previously given as 1097--1157]},
   JOURNAL = {Geom. Topol.},
  FJOURNAL = {Geometry and Topology},
    VOLUME = {10},
      YEAR = {2006},
     PAGES = {1097--1156},
      ISSN = {1465-3060,1364-0380},
   MRCLASS = {14J32 (14M25 53D40)},
  MRNUMBER = {2240909},
MRREVIEWER = {Michael\ J.\ Usher},
       DOI = {10.2140/gt.2006.10.1097},
       URL = {https://doi.org/10.2140/gt.2006.10.1097},
}

@article{MR3961331,
	Author = {Itenberg, Ilia and Katzarkov, Ludmil and Mikhalkin, Grigory and Zharkov, Ilia},
	Doi = {10.1007/s00208-018-1685-9},
	Fjournal = {Mathematische Annalen},
	Issn = {0025-5831},
	Journal = {Math. Ann.},
	Mrclass = {14T05},
	Mrnumber = {3961331},
	Mrreviewer = {Helge Ruddat},
	Number = {1-2},
	Pages = {963--1006},
	Title = {Tropical homology},
	Url = {https://doi.org/10.1007/s00208-018-1685-9},
	Volume = {374},
	Year = {2019}}

@incollection{MR2275625,
	Author = {Mikhalkin, Grigory},
	Booktitle = {International {C}ongress of {M}athematicians. {V}ol. {II}},
	Mrclass = {14P99 (14N10 14N35 52B20)},
	Mrnumber = {2275625},
	Mrreviewer = {Jean-Yves Welschinger},
	Pages = {827--852},
	Publisher = {Eur. Math. Soc., Z\"urich},
	Title = {Tropical geometry and its applications},
	Year = {2006}}

@article{MR3112512,
	Author = {Iritani, Hiroshi},
	Fjournal = {Universit\'e de Grenoble. Annales de l'Institut Fourier},
	Issn = {0373-0956},
	Journal = {Ann. Inst. Fourier (Grenoble)},
	Mrclass = {14N35 (14D07 32G20 53D37)},
	Mrnumber = {3112512},
	Mrreviewer = {Guosong Zhao},
	Number = {7},
	Pages = {2909--2958},
	Title = {Quantum cohomology and periods},
	Url = {http://aif.cedram.org/item?id=AIF_2011__61_7_2909_0},
	Volume = {61},
	Year = {2011}}

@article{MR2669728,
	Author = {Gross, Mark and Siebert, Bernd},
	Doi = {10.1090/S1056-3911-2010-00555-3},
	Fjournal = {Journal of Algebraic Geometry},
	Issn = {1056-3911},
	Journal = {J. Algebraic Geom.},
	Mrclass = {14J33 (14C30 14F40 14T05)},
	Mrnumber = {2669728},
	Mrreviewer = {Diego Matessi},
	Number = {4},
	Pages = {679--780},
	Title = {Mirror symmetry via logarithmic degeneration data, {II}},
	Url = {https://doi.org/10.1090/S1056-3911-2010-00555-3},
	Volume = {19},
	Year = {2010}}

@article{MR2576286,
	Author = {Iwao, Shinsuke},
	Doi = {10.1093/imrn/rnp129},
	Fjournal = {International Mathematics Research Notices. IMRN},
	Issn = {1073-7928},
	Journal = {Int. Math. Res. Not. IMRN},
	Mrclass = {14T05 (14H50)},
	Mrnumber = {2576286},
	Mrreviewer = {Hannah Markwig},
	Number = {1},
	Pages = {112--148},
	Title = {Integration over tropical plane curves and ultradiscretization},
	Url = {https://doi.org/10.1093/imrn/rnp129},
	Year = {2010}}

@article {MR4194298,
    AUTHOR = {Abouzaid, Mohammed and Ganatra, Sheel and Iritani, Hiroshi and
              Sheridan, Nick},
     TITLE = {The gamma and {S}trominger-{Y}au-{Z}aslow conjectures: a
              tropical approach to periods},
   JOURNAL = {Geom. Topol.},
  FJOURNAL = {Geometry \& Topology},
    VOLUME = {24},
      YEAR = {2020},
    NUMBER = {5},
     PAGES = {2547--2602},
      ISSN = {1465-3060},
   MRCLASS = {14J33 (11G42 14T20 32G20 53D37)},
  MRNUMBER = {4194298},
       DOI = {10.2140/gt.2020.24.2547},
       URL = {https://doi.org/10.2140/gt.2020.24.2547},
}

@article {MR4179831,
    AUTHOR = {Ruddat, Helge and Siebert, Bernd},
     TITLE = {Period integrals from wall structures via tropical cycles,
              canonical coordinates in mirror symmetry and analyticity of
              toric degenerations},
   JOURNAL = {Publ. Math. Inst. Hautes \'{E}tudes Sci.},
  FJOURNAL = {Publications Math\'{e}matiques. Institut de Hautes \'{E}tudes
              Scientifiques},
    VOLUME = {132},
      YEAR = {2020},
     PAGES = {1--82},
      ISSN = {0073-8301},
   MRCLASS = {14J33 (14T20)},
  MRNUMBER = {4179831},
       DOI = {10.1007/s10240-020-00116-y},
       URL = {https://doi.org/10.1007/s10240-020-00116-y},
}

@article {MR4347312,
    AUTHOR = {Ruddat, Helge},
     TITLE = {A homology theory for tropical cycles on integral affine
              manifolds and a perfect pairing},
   JOURNAL = {Geom. Topol.},
  FJOURNAL = {Geometry \& Topology},
    VOLUME = {25},
      YEAR = {2021},
    NUMBER = {6},
     PAGES = {3079--3132},
      ISSN = {1465-3060},
   MRCLASS = {14J32 (05E45 14D06 14T20 32S60 55U10)},
  MRNUMBER = {4347312},
       DOI = {10.2140/gt.2021.25.3079},
       URL = {https://doi.org/10.2140/gt.2021.25.3079},
}

@article{MR2553377,
	Author = {Iritani, Hiroshi},
	Doi = {10.1016/j.aim.2009.05.016},
	Fjournal = {Advances in Mathematics},
	Issn = {0001-8708},
	Journal = {Adv. Math.},
	Mrclass = {53D37 (14J33 14N35 53D45)},
	Mrnumber = {2553377},
	Mrreviewer = {Hsian-Hua Tseng},
	Number = {3},
	Pages = {1016--1079},
	Title = {An integral structure in quantum cohomology and mirror symmetry for toric orbifolds},
	Url = {https://doi.org/10.1016/j.aim.2009.05.016},
	Volume = {222},
	Year = {2009}}

@incollection {MR2681794,
    AUTHOR = {Ruddat, Helge},
     TITLE = {Log {H}odge groups on a toric {C}alabi-{Y}au degeneration},
 BOOKTITLE = {Mirror symmetry and tropical geometry},
    SERIES = {Contemp. Math.},
    VOLUME = {527},
     PAGES = {113--164},
 PUBLISHER = {Amer. Math. Soc., Providence, RI},
      YEAR = {2010},
   MRCLASS = {14D06 (14J33 14M25 32S35)},
  MRNUMBER = {2681794},
MRREVIEWER = {Siu-Cheong Lau},
       DOI = {10.1090/conm/527/10402},
       URL = {https://doi.org/10.1090/conm/527/10402},
}

@incollection{MR3330789,
	Author = {Mikhalkin, Grigory and Zharkov, Ilia},
	Booktitle = {Homological mirror symmetry and tropical geometry},
	Doi = {10.1007/978-3-319-06514-4_7},
	Mrclass = {14T05 (52B40 55N35)},
	Mrnumber = {3330789},
	Mrreviewer = {Susan J. Colley},
	Pages = {309--349},
	Publisher = {Springer, Cham},
	Series = {Lect. Notes Unione Mat. Ital.},
	Title = {Tropical eigenwave and intermediate {J}acobians},
	Url = {https://doi.org/10.1007/978-3-319-06514-4_7},
	Volume = {15},
	Year = {2014}}

@article{MR3228462,
	Author = {Casta\~{n}o-Bernard, Ricardo and Matessi, Diego},
	Doi = {10.2140/gt.2014.18.1769},
	Fjournal = {Geometry \& Topology},
	Issn = {1465-3060},
	Journal = {Geom. Topol.},
	Mrclass = {14J33 (14T05 53D37)},
	Mrnumber = {3228462},
	Mrreviewer = {Siu-Cheong Lau},
	Number = {3},
	Pages = {1769--1863},
	Title = {Conifold transitions via affine geometry and mirror symmetry},
	Url = {https://doi.org/10.2140/gt.2014.18.1769},
	Volume = {18},
	Year = {2014}}

@incollection{MR2024634,
	Author = {Symington, Margaret},
	Booktitle = {Topology and geometry of manifolds ({A}thens, {GA}, 2001)},
	Doi = {10.1090/pspum/071/2024634},
	Mrclass = {53D35 (53D20 55R55 57R17)},
	Mrnumber = {2024634},
	Mrreviewer = {Vicente Mu\~{n}oz},
	Pages = {153--208},
	Publisher = {Amer. Math. Soc., Providence, RI},
	Series = {Proc. Sympos. Pure Math.},
	Title = {Four dimensions from two in symplectic topology},
	Url = {https://doi.org/10.1090/pspum/071/2024634},
	Volume = {71},
	Year = {2003}}

@article{MR3894860,
	Author = {Jell, Philipp and Rau, Johannes and Shaw, Kristin},
	Fjournal = {\'{E}pijournal de G\'{e}om\'{e}trie Alg\'{e}brique. EPIGA},
	Journal = {\'{E}pijournal G\'{e}om. Alg\'{e}brique},
	Mrclass = {14T05 (14C22 14C25 52B40 55N35)},
	Mrnumber = {3894860},
	Mrreviewer = {Gary P. Kennedy},
	Pages = {Art. 11, 27},
	Title = {Lefschetz {$(1,1)$}-theorem in tropical geometry},
	Volume = {2},
	Year = {2018}}

@book{MR2810322,
	Author = {Cox, David A. and Little, John B. and Schenck, Henry K.},
	Doi = {10.1090/gsm/124},
	Isbn = {978-0-8218-4819-7},
	Mrclass = {14M25 (05A15 05E45 52B12)},
	Mrnumber = {2810322},
	Mrreviewer = {Ivan Arzhantsev},
	Pages = {xxiv+841},
	Publisher = {American Mathematical Society, Providence, RI},
	Series = {Graduate Studies in Mathematics},
	Title = {Toric varieties},
	Url = {https://doi.org/10.1090/gsm/124},
	Volume = {124},
	Year = {2011}}

@article {MR1415592,
    AUTHOR = {Fulton, William and Sturmfels, Bernd},
     TITLE = {Intersection theory on toric varieties},
   JOURNAL = {Topology},
  FJOURNAL = {Topology. An International Journal of Mathematics},
    VOLUME = {36},
      YEAR = {1997},
    NUMBER = {2},
     PAGES = {335--353},
      ISSN = {0040-9383},
   MRCLASS = {14M25 (14C17 52B20)},
  MRNUMBER = {1415592},
MRREVIEWER = {Michel\ Brion},
       DOI = {10.1016/0040-9383(96)00016-X},
       URL = {https://doi.org/10.1016/0040-9383(96)00016-X},
}

@incollection {MR4294796,
    AUTHOR = {Ruddat, Helge and Zharkov, Ilia},
     TITLE = {Compactifying torus fibrations over integral affine manifolds
              with singularities},
 BOOKTITLE = {2019--20 {MATRIX} annals},
    SERIES = {MATRIX Book Ser.},
    VOLUME = {4},
     PAGES = {609--622},
 PUBLISHER = {Springer, Cham},
      YEAR = {[2021] \copyright 2021},
      ISBN = {978-3-030-62496-5; 978-3-030-62497-2},
   MRCLASS = {14D06 (57S12)},
  MRNUMBER = {4294796},
       DOI = {10.1007/978-3-030-62497-2\_37},
       URL = {https://doi.org/10.1007/978-3-030-62497-2_37},
}

@unpublished{RZ21,
	Author = {Ruddat, Helge and Zharkov, Ilia},
	Note = {in preparation},
	Title = {Topological {S}trominger-{Y}au-{Z}aslow fibrations},
	Year = {}}

@article {MR4179650,
    AUTHOR = {Mak, Cheuk Yu and Ruddat, Helge},
     TITLE = {Tropically constructed {L}agrangians in mirror quintic
              threefolds},
   JOURNAL = {Forum Math. Sigma},
  FJOURNAL = {Forum of Mathematics. Sigma},
    VOLUME = {8},
      YEAR = {2020},
     PAGES = {Paper No. e58, 55},
      ISSN = {2050-5094},
   MRCLASS = {53D12 (14J33 53D37 57R17)},
  MRNUMBER = {4179650},
       DOI = {10.1017/fms.2020.54},
       URL = {https://doi.org/10.1017/fms.2020.54},
}

@article {MR4125753,
    AUTHOR = {Hicks, Jeffrey},
     TITLE = {Tropical {L}agrangian hypersurfaces are unobstructed},
   JOURNAL = {J. Topol.},
  FJOURNAL = {Journal of Topology},
    VOLUME = {13},
      YEAR = {2020},
    NUMBER = {4},
     PAGES = {1409--1454},
      ISSN = {1753-8416,1753-8424},
   MRCLASS = {53D37 (14J33 14T20 53D12)},
  MRNUMBER = {4125753},
MRREVIEWER = {Roman\ Golovko},
       DOI = {10.1112/topo.12165},
       URL = {https://doi.org/10.1112/topo.12165},
}

@article {MR4284602,
    AUTHOR = {Matessi, Diego},
     TITLE = {Lagrangian submanifolds from tropical hypersurfaces},
   JOURNAL = {Internat. J. Math.},
  FJOURNAL = {International Journal of Mathematics},
    VOLUME = {32},
      YEAR = {2021},
    NUMBER = {7},
     PAGES = {Paper No. 2150046, 63},
      ISSN = {0129-167X,1793-6519},
   MRCLASS = {53D12 (14T20 53D37)},
  MRNUMBER = {4284602},
MRREVIEWER = {Bhupendra\ Nath\ Tiwari},
       DOI = {10.1142/S0129167X21500464},
       URL = {https://doi.org/10.1142/S0129167X21500464},
}

@article {MR3993277,
    AUTHOR = {Mikhalkin, Grigory},
     TITLE = {Examples of tropical-to-{L}agrangian correspondence},
   JOURNAL = {Eur. J. Math.},
  FJOURNAL = {European Journal of Mathematics},
    VOLUME = {5},
      YEAR = {2019},
    NUMBER = {3},
     PAGES = {1033--1066},
      ISSN = {2199-675X,2199-6768},
   MRCLASS = {53D20 (14T05 53D12)},
  MRNUMBER = {3993277},
MRREVIEWER = {D.\ A.\ Stepanov},
       DOI = {10.1007/s40879-019-00319-6},
       URL = {https://doi.org/10.1007/s40879-019-00319-6},
}

@article {MR4294119,
    AUTHOR = {Matessi, Diego},
     TITLE = {Lagrangian pairs of pants},
   JOURNAL = {Int. Math. Res. Not. IMRN},
  FJOURNAL = {International Mathematics Research Notices. IMRN},
      YEAR = {2021},
    NUMBER = {15},
     PAGES = {11306--11356},
      ISSN = {1073-7928,1687-0247},
   MRCLASS = {53D12 (14M25 14T20)},
  MRNUMBER = {4294119},
MRREVIEWER = {Ilia\ V.\ Itenberg},
       DOI = {10.1093/imrn/rnz126},
       URL = {https://doi.org/10.1093/imrn/rnz126},
}

@unpublished{AMH25,
	Author = {Asakura, Masanori and Matsubara-Heo, Saiei-Jaeyeong},
	Note = {Preprint},
	Title = {Periods of limiting mixed {H}odge structures of projective
hypersurfaces},
	Year = {2025}}

@article {MR4484542,
    AUTHOR = {Yamamoto, Yuto},
     TITLE = {Periods of tropical {C}alabi-{Y}au hypersurfaces},
   JOURNAL = {J. Algebraic Geom.},
  FJOURNAL = {Journal of Algebraic Geometry},
    VOLUME = {31},
      YEAR = {2022},
    NUMBER = {2},
     PAGES = {303--343},
      ISSN = {1056-3911,1534-7486},
   MRCLASS = {14J33 (14D07 14T20)},
  MRNUMBER = {4484542},
MRREVIEWER = {Benjamin\ Gammage},
}

@article {MR4950977,
    AUTHOR = {Han, Zengrui},
     TITLE = {Central charges in local mirror symmetry via hypergeometric
              duality},
   JOURNAL = {Adv. Math.},
  FJOURNAL = {Advances in Mathematics},
    VOLUME = {480},
      YEAR = {2025},
     PAGES = {Paper No. 110502, 37},
      ISSN = {0001-8708,1090-2082},
   MRCLASS = {14J33 (14J32 14M25 33C90)},
  MRNUMBER = {4950977},
       DOI = {10.1016/j.aim.2025.110502},
       URL = {https://doi.org/10.1016/j.aim.2025.110502},
}

@article {MR4922780,
    AUTHOR = {Wang, Junxiao},
     TITLE = {Mirror symmetric gamma conjecture for tropical curves in local
              mirror symmetry},
   JOURNAL = {J. Symplectic Geom.},
  FJOURNAL = {The Journal of Symplectic Geometry},
    VOLUME = {23},
      YEAR = {2025},
    NUMBER = {2},
     PAGES = {309--383},
      ISSN = {1527-5256,1540-2347},
   MRCLASS = {14J33 (14T20 53D37 53D45)},
  MRNUMBER = {4922780},
       DOI = {10.4310/jsg.250624011941},
       URL = {https://doi.org/10.4310/jsg.250624011941},
}

@unpublished{You25,
	Author = {You, Fenglong},
	Note = {arXiv:2508.06750},
	Title = {Relative mirror symmetry, theta functions and the {G}amma conjecture},
	Year = {2025}}

@unpublished{BL22,
	Author = {Berglund, Per and Lathwood, Michael},
	Note = {arXiv:2212.11906},
	Title = {Tropical {P}eriods for {C}alabi-{Y}au {H}ypersurfaces in non--{F}ano {T}oric {V}arieties},
	Year = {2022}}

@unpublished{Iri23,
	Author = {Iritani, Hiroshi},
	Note = {arXiv:2307.15940},
	Title = {Mirror symmetric {G}amma conjecture for {F}ano and {C}alabi-{Y}au manifolds},
	Year = {2023}}

@unpublished{CLL25,
	Author = {Chiu, Shih-Kai and Li, Yang and Lin, Yu-Shen},
	Note = {arXiv:2509.04843},
	Title = {From tropical curves to special {L}agrangians},
	Year = {2025}}

@article {MR4904614,
    AUTHOR = {Li, Yang},
     TITLE = {Special {L}agrangian pair of pants},
   JOURNAL = {Comm. Pure Appl. Math.},
  FJOURNAL = {Communications on Pure and Applied Mathematics},
    VOLUME = {78},
      YEAR = {2025},
    NUMBER = {7},
     PAGES = {1320--1356},
      ISSN = {0010-3640,1097-0312},
   MRCLASS = {53D12},
  MRNUMBER = {4904614},
       DOI = {10.1002/cpa.22248},
       URL = {https://doi.org/10.1002/cpa.22248},
}

@unpublished{FWZ23,
	Author = {Fang, Bohan and Wang, Junxiao and Zhou, Yan},
	Note = {arXiv:2309.02154},
	Title = {Mirror symmetric {G}amma conjecture for del {P}ezzo surfaces},
	Year = {2023}}

@unpublished{AFW25,
	Author = {Aleshkin, Konstantin and Fang, Bohan and Wang, Junxiao},
	Note = {arXiv:2501.14222},
	Title = {Mirror symmetric {G}amma conjecture for toric {GIT} quotients via {F}ourier transform},
	Year = {2025}}

@unpublished{Yam21,
	Author = {Yamamoto, Yuto},
	Note = {arXiv:2105.10141},
	Title = {Tropical contractions to integral affine manifolds with singularities},
	Year = {2021}}

@incollection {MR2282969,
    AUTHOR = {Hosono, Shinobu},
     TITLE = {Central charges, symplectic forms, and hypergeometric series
              in local mirror symmetry},
 BOOKTITLE = {Mirror symmetry. {V}},
    SERIES = {AMS/IP Stud. Adv. Math.},
    VOLUME = {38},
     PAGES = {405--439},
 PUBLISHER = {Amer. Math. Soc., Providence, RI},
      YEAR = {2006},
   MRCLASS = {14J32 (14D05 14D07 32Q25 33C20)},
  MRNUMBER = {2282969},
MRREVIEWER = {Michele Rossi},
}

@article {MR0260733,
    AUTHOR = {Griffiths, Phillip A.},
     TITLE = {On the periods of certain rational integrals. {I}, {II}},
   JOURNAL = {Ann. of Math. (2) 90 (1969), 460-495; ibid. (2)},
  FJOURNAL = {Annals of Mathematics. Second Series},
    VOLUME = {90},
      YEAR = {1969},
     PAGES = {496--541},
      ISSN = {0003-486X},
   MRCLASS = {14.01},
  MRNUMBER = {0260733},
MRREVIEWER = {F. Gherardelli},
       DOI = {10.2307/1970746},
       URL = {https://doi.org/10.2307/1970746},
}

@article {MR1733735,
    AUTHOR = {Mavlyutov, Anvar R.},
     TITLE = {Semiample hypersurfaces in toric varieties},
   JOURNAL = {Duke Math. J.},
  FJOURNAL = {Duke Mathematical Journal},
    VOLUME = {101},
      YEAR = {2000},
    NUMBER = {1},
     PAGES = {85--116},
      ISSN = {0012-7094},
   MRCLASS = {14M25 (14F25)},
  MRNUMBER = {1733735},
MRREVIEWER = {Dag E. Sommervoll},
       DOI = {10.1215/S0012-7094-00-10114-7},
       URL = {https://doi.org/10.1215/S0012-7094-00-10114-7},
}

@article {MR3228454,
    AUTHOR = {Ruddat, Helge and Sibilla, Nicol\`o and Treumann, David and
              Zaslow, Eric},
     TITLE = {Skeleta of affine hypersurfaces},
   JOURNAL = {Geom. Topol.},
  FJOURNAL = {Geometry \& Topology},
    VOLUME = {18},
      YEAR = {2014},
    NUMBER = {3},
     PAGES = {1343--1395},
      ISSN = {1465-3060},
   MRCLASS = {14J70 (14M25 14R99)},
  MRNUMBER = {3228454},
MRREVIEWER = {Helena Ferreira Soares},
       DOI = {10.2140/gt.2014.18.1343},
       URL = {https://doi.org/10.2140/gt.2014.18.1343},
}

@article {MR2529936,
    AUTHOR = {Abouzaid, Mohammed},
     TITLE = {Morse homology, tropical geometry, and homological mirror
              symmetry for toric varieties},
   JOURNAL = {Selecta Math. (N.S.)},
  FJOURNAL = {Selecta Mathematica. New Series},
    VOLUME = {15},
      YEAR = {2009},
    NUMBER = {2},
     PAGES = {189--270},
      ISSN = {1022-1824},
   MRCLASS = {53D37 (14J33 14M25 14T05 53D40)},
  MRNUMBER = {2529936},
MRREVIEWER = {Kwokwai Chan},
       DOI = {10.1007/s00029-009-0492-2},
       URL = {https://doi.org/10.1007/s00029-009-0492-2},
}

@article {MR2871160,
    AUTHOR = {Fang, Bohan and Liu, Chiu-Chu Melissa and Treumann, David and
              Zaslow, Eric},
     TITLE = {T-duality and homological mirror symmetry for toric varieties},
   JOURNAL = {Adv. Math.},
  FJOURNAL = {Advances in Mathematics},
    VOLUME = {229},
      YEAR = {2012},
    NUMBER = {3},
     PAGES = {1875--1911},
      ISSN = {0001-8708},
   MRCLASS = {14J33 (14M25)},
  MRNUMBER = {2871160},
MRREVIEWER = {Siu-Cheong Lau},
       DOI = {10.1016/j.aim.2011.10.022},
       URL = {https://doi.org/10.1016/j.aim.2011.10.022},
}

@article {MR3948684,
    AUTHOR = {Hanlon, Andrew},
     TITLE = {Monodromy of monomially admissible {F}ukaya-{S}eidel
              categories mirror to toric varieties},
   JOURNAL = {Adv. Math.},
  FJOURNAL = {Advances in Mathematics},
    VOLUME = {350},
      YEAR = {2019},
     PAGES = {662--746},
      ISSN = {0001-8708},
   MRCLASS = {14F08 (14J33 14M25 53D37)},
  MRNUMBER = {3948684},
MRREVIEWER = {Amin Gholampour},
       DOI = {10.1016/j.aim.2019.04.056},
       URL = {https://doi.org/10.1016/j.aim.2019.04.056},
}
\end{document}